\author{ 
  Alex Bombrun\thanks{Most of this work was done when this author was with INRIA. \texttt{Alex.Bombrun@gmail.com}.}
  \and 
  Jean-Baptiste Pomet\thanks{INRIA, B.P. 93, 06902 Sophia Antipolis cedex, France. 
    Email:~\texttt{Jean-Baptiste.Pomet@inria.fr}.} 
}
\title{The averaged control system \\ of fast oscillating control systems\thanks{
This work was partly supported by
    \textit{Thales Alenia Space}, in 2004-2007.\newline
\textsf{Submitted to SIAM J. Control. Optim., Dec. 5, 2011. Revised Sept. 10 and Dec. 7, 2012.} }} %\hfill  v.  \svnInfoRevision}}
\begin{document}

\maketitle

\begin{abstract} 
For control systems that either have a fast explicit periodic dependence on time and bounded controls or have periodic solutions and small
controls, we define an \emph{average control system} that takes into account all possible variations of the control, and
prove that its solutions approximate all solutions of the oscillating system as oscillations go faster.

The dimension of its velocity set is characterized geometrically. When it is maximum the average system defines a Finsler
metric, not twice differentiable in general. %  Under particular assumptions, valid for the controlled two body
% system, this Finsler metric generates a Hamiltonian flow on the cotangent bundle.
For minimum time control, this average system allows one to give a rigorous proof that averaging the Hamiltonian given by the maximum principle is a valid approximation. 
\end{abstract}

\begin{keywords} 
Averaging, control systems, small control, optimal control, Finsler geometry.
\end{keywords}

\begin{AMS}
34C29, 34H05, 49J15, 93B11, 93C15, 93C70, 53B40
\end{AMS}

\pagestyle{myheadings}
\thispagestyle{plain}
\markboth{A. BOMBRUN AND J.-B. POMET}{AVERAGED CONTROL SYSTEM}
% \markboth
% {. \hfill {\small \texttt{\svnInfoFile}, \TeX ed \today\ -- \underline{\textit{v. \svnInfoRevision}} (\svnInfoDay/\svnInfoMonth/\svnInfoYear)}}
% {{\small \underline{\textit{v. \svnInfoRevision}} (\svnInfoDay/\svnInfoMonth/\svnInfoYear) -- \texttt{\svnInfoFile}, \TeX ed \today}\hfill.}

\section{Introduction}
\label{sec-intro}
We consider either a ``fast-oscillating control system'' \eqref{eq-sysO}:
\begin{equation*}
\dot x = u_1 X_1(\frac t\varepsilon,x)+\cdots+u_m X_m(\frac t\varepsilon,x)\,,\ \|u\|\leq1\,, 
\end{equation*}
where all $X_i$'s are $2\pi$-periodic with respect to $t/\varepsilon$, or a ``Kepler control system'' \eqref{eq:Kep}:
$$\dot\xi = f_0(\xi)+v_1 f_1(\xi)+\cdots+v_m f_m(\xi) \,,\ \|v\|\leq\varepsilon$$
where all solutions of $\dot\xi = f_0(\xi) $ are periodic.

Averaging techniques 
for
% to study perturbations of 
conservative ---periodic or not--- ordinary differential equations (ODEs)
date back at least to H. Poincaré; see \cite[\S52]{Arno89} or \cite{Sand-Ver85} for recent
expositions.
Roughly speaking,  %  $\dot x=F(\frac t \varepsilon,x)$
on a fixed interval, the solutions of $\dot x=F(t/\varepsilon,x)$ differ from those of 
$\dot x=\overline{F}(x)$ by a term of order $\varepsilon$, with $\overline{F}$ the average of $F$ with respect to its first argument. % of order $1/\varepsilon$ is preferred to
% $\dot x=F(t/\varepsilon,x)$ on a fixed interval). 
% $\dot x=\varepsilon F(t,x)$

If $u$ or $v$ above is assigned to be a fixed function of state and time (or computed from
additional state variables as in $u=\alpha(p,x),\;\dot{p}=g(p,x)$), then these techniques for ODEs can be applied to give an
approximation at first order with respect to small $\varepsilon$ of the movement of the slow variables. 
Averaging is usually used in this way in control theory:
in vibrational control~\cite{Meer80}, fast oscillating controls are designed and %for possibly time-invariant systems,
averaging techniques allows analysis and proof of stability;
in the same way, it solves stability and path planning questions in control of mechanical systems, see for instance \cite{Bull02};
in \cite[\S5]{Flie-Lev-Mar-R95ijc}, high frequency control is used to approach a non-flat system by a flat one;
one may also mention many applications to control~\cite{Murr94,Liu97siam2,Mori-Pom-Sam99} of the %seminal
work~\cite{Kurz-Jar88} that mimics Lie brackets by highly oscillatory controls along the original vector fields.
A common feature to these references is that the use of oscillations ``creates'' new independent controls used for design. 
The use of averaging in optimal control of oscillating systems~\cite{Chap87,Geff97th,Gef-Epe:97,Bon-Cail:06:AIHP} is
similar in spirit to the above, but closer to the framework of this paper because oscillations are present in the system
instead of being introduced by the control.
Very interesting results are obtained applying averaging to the Hamiltonian
equations arising from Pontryagin Maximum principle.
For instance, in~\cite{Bon-Cail:06:AIHP}, the authors have studied in this way the problem of minimal energy transfer
between two elliptic orbits; extremals are the same as
those giving the geodesics of a Riemannian metric. 
Again, averaging introduces ``new independent controls'': Riemannian geodesics are minimizers of a problem where all
velocity directions are allowed whereas the velocity set of the
original system at each point had positive codimension.
The same averaging computation may be applied to the Hamiltonian differential equation obtained for minimum time, but,
since this differential equation is discontinuous, there is no theoretical justification for averaging in that case.

Our contribution is to introduce a different way of averaging that takes into account all possible variations of the
control ---hence the control strategy can be decided \emph{after} performing averaging--- and to prove that it has
satisfying regularity properties and is a good first order approximation of the above systems as $\varepsilon\!\to\!0$.
This gives, as a side result, a justification of the use of averaging for minimum time
in~\cite{Geff97th,Gef-Epe:97}.
This procedure also ``creates new independent control'', i.e. increases the dimension of the velocity set, that we
characterize in terms of the original vector fields. When this dimension is maximum, the average system
defines a Finsler metric~\cite{Bao-Che-She00} on the manifold, whose geodesics are the limits of minimum time trajectories for the original
systems as $\varepsilon\!\to\!0$. 
This Finsler metric is in general not twice differentiable (hence it is not a Finsler metric in the sense of
~\cite{Bao-Che-She00}, indeed); 
we however prove that, at least in the less degenerate case, the Hamiltonian system governing extremals, although it is
not locally Lipschitz, generates a flow on the cotangent bundle.
Low thrust planar orbit transfer belongs to this less degenerate case.
% We prove in \S\ref{sec-2body} that this applies to the in minimum time.

The average control system may be used for other purposes than optimal control, for instance~\cite{Bomb07th} designs a
Lyapunov function for feedback control in the average system and uses it for the oscillating systems;
indeed the present work was developed out of comparing feedback control based on a priori chosen Lyapunov functions
with minimum time control for low thrust orbital transfer.

Preliminary versions of this paper can be found in~\cite{bom-pom:06:MTNS,Bomb07th}. It is organized as follows:
the construction and results are developed for ``fast-oscillating control system'' in \S\ref{sec-fast} and then transferred in \S\ref{sec-kep} to ``Kepler
control systems'', and applied to minimum time orbit transfer in the planar 2-body problem in  \S\ref{sec-2body}.
% One proof is postponed until the
% appendix, that comes after a short conclusion (\S\ref{sec-concl}).

\section{Notations and conventions}
$\ $

\refstepcounter{subsection}
\textbf{\thesubsection.} $\vari$ is a smooth connected manifold of dimension $n$; its tangent
and cotangent bundles are denoted by $\mathrm{T}\vari$ and $\mathrm{T}^*\vari$.
One may assume for simplicity $\vari=\RR^n$, $\mathrm{T}\vari=\RR^n\times\RR^n$,
$\mathrm{T}^*\vari=\RR^n\times\left(\RR^n\right)^*$, and, for $x\in\vari$, $\mathrm{T}_x\vari=\RR^n$,
$\mathrm{T}^*_x\vari=\left(\RR^n\right)^*$. 

For $v\in\mathrm{T}_x\vari$, $p\in\mathrm{T}^*_x\vari$ (or any $v,p$ taken in a vector space and its dual), we denote by
$\langle p,v\rangle$ (rather than $p(v)$) their duality product. 

\refstepcounter{subsection}
\textbf{\thesubsection.} 
If $E$ is a subset of a vector space $V$, then $E^\perp$ is its annihilator, the vector subspace of its dual $V^*$ 
made of all $p$'s such that $\langle p,v\rangle=0$ for all $v$ in $E$.

\refstepcounter{subsection}\label{sec-not-d}
\textbf{\thesubsection.} 
We assume that $\vari$ is endowed with an arbitrary Riemannian distance $d$. 
If $\vari=\RR^n$, just choose the canonical Euclidean distance.

In local coordinates, $\|.\|$ and $\scal{.}{.}$ stand for the canonical Euclidean norm and scalar product.
On a compact coordinate chart, $k_1 \|x-y\|\leq d(x,y)\leq k_2 \|x-y\|$ for some positive $k_1$, $k_2$
(Lipschitz equivalence).
We also denote
operator norms by $\|.\|$.

\refstepcounter{subsection}\label{sec-not-S1}
\textbf{\thesubsection.} 
$S^1$ is $\RR/2\pi\ZZ$. For $\theta$ in $S^1$ (an angle), we denote by $\mu(\theta)$ the unique real number in $[0,2\pi)$ such that
$\mu(\theta)\equiv\theta\mod2\pi$. For a real number $s\in\RR$, we denote the angle it represents by
$s\!\!\mod\!2\pi$; it belongs to the quotient $S^1$.

Maps $S^1\to E$ (arbitrary set) are identified with $2\pi$-periodic maps $\RR\to E$.
For instance, if $f$ is such a map $S^1\to E$ and $\tau\in\RR$, we write $f(\tau)$ instead of
$f(\tau\!\!\!\mod\!2\pi)$;
the average of $f$ is denoted by 
$\frac1{2\pi} \! \int_0^{2\pi} \!\!\!f(\theta)\mathrm{d}\theta$, 
or $\frac1{2\pi} \! \int_{-\pi}^{\pi} \!f(\theta)\mathrm{d}\theta$, 
or $\frac1{2\pi}\! \int_{\theta\in S^1} f(\theta)\mathrm{d}\theta$; one identifies
$L^p(S^1,\RR^m)$ with the subset of $L^p(\RR,\RR^m)$ made of $2\pi$-periodic functions.

\refstepcounter{subsection}\label{sec-not-Rm}
\textbf{\thesubsection.} 
The Euclidean norm in $\RR^m$ or $(\RR^m)^*$ is denoted by $\|.\|$, and the ball of
radius one centered at the origin by $B^m$. 
We view an element of $\RR^m$ as $m\times1$ matrix (column) of real numbers and an element of $(\RR^m)^*$ as a $1\times
m$ matrix (line); transposition, denoted
$.^\top$, sends $\RR^m$ to $(\RR^m)^*$ and vice-versa.

% The paper is organized as follow. In Section~\ref{sec-basic} we present the definitions and properties of the fast oscillating control systems and of the average system. In particular we discuss the link between the time optimal problems for the fast-oscillating control systems and the Finsler metric defined by the average system. Theorem~\ref{th-ham-flow}Â states the main result of this paper. It gives sufficient conditions in order to insure the existence of an Hamiltonian flow associated to the Finsler metric defined by the average system. In Section~\ref{sec-regularity} we discuss some regularity properties of the mean value of a norm of a smooth periodic map. Proposition~\ref{prop-LipLog} gives sufficient conditions such that the Hamiltonian associated to the Finsler metric satisfies the regularity conditions of Theorem~\ref{th-ham-flow}.Â Finally in Section~\ref{sec-2body} we discuss the regularity properties of the Hamiltonian of the average system associated to planar control 2-body systems restricted to the domain of non-degenerated elliptic orbits.

%%%%%%%%%%%%%%%%%%%%%%%%%%%%%%%%%%%%%%%%%%%%%
% 
% SECTION 
% 
%%%%%%%%%%%%%%%%%%%%%%%%%%%%%%%%%%%%%%%%%%%%%
\section{Fast oscillating control systems}
\label{sec-fast}

We call \emph{fast oscillating control system} on $\vari$ %a smooth manifold $\vari$ of dimension $n$ 
a family of non-autonomous systems, linear in the control $u\in\RR^m$:
\begin{equation}
\label{eq-sysO}
\dot{x} = \GG(\frac{t}\varepsilon,x)\,u = \sum_{i=1}^m \GG_i(\frac{t}\varepsilon,x)\,u_i \,, \|u\|\leq 1\,
\end{equation}
indexed by a positive number $\varepsilon$. % ; it is intended to be small, whence the fast oscillations, we are interested in
% the limiting behavior when it tends to zero.
Each $\GG_i$ is a smooth
``periodic time-varying'' vector field:
$\GG_i\in C^{\infty}(S^1\times\vari, \mathrm{T}\vari)$.
An admissible control is a measurable $u(.):[0,T]\to B^m$ for some $T>0$. For a given control $u(.)$ and initial condition
$x(0)$, there is a unique solution $x(.)$, defined either on $[0,T]$ or only on 
a maximal interval $[0,T')$, $T'<T$.

%%%%%%%%%%%%%%%
% REMARK :
%%%%%%%%%%%%%%%
\refstepcounter{theorem}\paragraph{Remark~\thetheorem}\label{rmk-G-oper}
Apart from being a notation defined by the double
equality in (\ref{eq-sysO}), $\GG(\theta,x)$ defines a linear map $\RR^m\to\mathrm{T}_x\vari$ that sends
$(u_1,\ldots,u_m)^\top$ to $\sum_{i=1}^m \GG_i(\frac{t}\varepsilon,x)u_i$.
%%%%%%%%%%%%%%%
% END OF REMARK
%%%%%%%%%%%%%%% 

%%%%%%%%%%%%%%%%%%%%%%%%%%%%%%%%%%
% SUBSECTION
% AVERAGE CONTROL SYSTEM
%%%%%%%%%%%%%%%%%%%%%%%%%%%%%%%%%%
\subsection{Average control system of fast oscillating control systems}
\label{sec-fast-aver}

Define the map $\overline{\GG}:\vari\times L^\infty([0,2\pi],\RR^m)\to\mathrm{T}\vari$ by
\begin{equation}
  \label{eq:defGbar}
  \overline{\GG}(x,\mathscr{U})=\frac1{2\pi}\int_0^{2\pi}\!\!\GG(\theta,x)\,\mathscr{U}(\theta)\,\mathrm{d}\theta\ .
\end{equation}
It allows one to define, for all $x\in\vari$, the subset $\EE(x)\subset\mathrm{T}_x\vari$ by
\begin{equation}
  \label{eq:calE}
  \EE(x)=\left\{ \overline{\GG}(x,\mathscr{U}), \,\mathscr{U}\in L^\infty\left([0,2\pi], \RR^m
    \right),\,\|\mathscr{U}\|_{\infty}\leq1  \right\}\subset T_x\vari \ ,
\end{equation}
and the \emph{average control system} of (\ref{eq-sysO}) as follows\footnote{Its relation to the limit case  of \eqref{eq-sysO} as
  $\varepsilon\to0$ is discussed in the next section.}.

\smallskip

\begin{definition}
  \label{def-aver}
The average control system of \eqref{eq-sysO} is the differential inclusion
\begin{equation}
\label{eq-sysM}
\dot{x}\in \EE(x).
\end{equation}
A solution of (\ref{eq-sysM}) is an absolutely continuous $x(.):[0,T]\to\vari$ such that 
$\dot x(t)\in\EE(x(t))$ for almost all $t$.
\end{definition}

\smallskip

\begin{proposition}
\label{lem-conv}
For all $x$ in $\vari$, $\EE(x)$ is convex, compact and symmetric with respect to the origin.
\end{proposition}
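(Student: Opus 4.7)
The plan is to treat the three properties separately, with compactness being the only non-trivial one.

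For fixed $x\in\vari$, the map $\mathscr{U}\mapsto\overline{\GG}(x,\mathscr{U})$ is linear from $L^\infty([0,2\pi],\RR^m)$ into the finite-dimensional vector space $\mathrm{T}_x\vari$. The constraint set $\{\mathscr{U}:\|\mathscr{U}\|_\infty\leq1\}$ is convex and symmetric about $0$, so its linear image $\EE(x)$ is immediately convex and symmetric. These two points take one line each.

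The substantive part is compactness. Boundedness is immediate: since $\GG(\cdot,x)$ is continuous on the compact $S^1$, the operator norm $\|\GG(\theta,x)\|$ is uniformly bounded in $\theta$, so
\[
\|\overline{\GG}(x,\mathscr{U})\|\;\leq\;\frac1{2\pi}\int_0^{2\pi}\|\GG(\theta,x)\|\,\|\mathscr{U}(\theta)\|\,\mathrm{d}\theta\;\leq\;\max_{\theta\in S^1}\|\GG(\theta,x)\|
\]
whenever $\|\mathscr{U}\|_\infty\leq1$. It remains to show $\EE(x)$ is closed in $\mathrm{T}_x\vari\simeq\RR^n$; combined with boundedness this yields compactness by Heine--Borel.

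For closedness, the natural tool is weak-$*$ compactness. Given a sequence $v_k=\overline{\GG}(x,\mathscr{U}_k)\to v$ with $\|\mathscr{U}_k\|_\infty\leq1$, by the Banach--Alaoglu theorem (the closed unit ball of $L^\infty$ is the dual of the closed unit ball of $L^1$ and is weak-$*$ compact, metrizable on bounded sets since $L^1(S^1)$ is separable) there is a subsequence $\mathscr{U}_{k_j}$ converging weak-$*$ to some $\mathscr{U}^\star$ with $\|\mathscr{U}^\star\|_\infty\leq1$. Since each component of $\theta\mapsto\GG(\theta,x)$ lies in $L^1([0,2\pi])$, the functional $\mathscr{U}\mapsto\overline{\GG}(x,\mathscr{U})$ is a finite collection of weak-$*$ continuous linear forms on $L^\infty$, so $v_{k_j}\to\overline{\GG}(x,\mathscr{U}^\star)$, which forces $v=\overline{\GG}(x,\mathscr{U}^\star)\in\EE(x)$.

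The only possible obstacle is making the weak-$*$ convergence argument rigorous, but since the target is finite-dimensional and $\GG(\cdot,x)$ is smooth (hence in $L^1$) on the compact set $S^1$, there is no technical difficulty; no hypotheses beyond those in Section~\ref{sec-fast} are needed.
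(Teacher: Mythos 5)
Your proof is correct and follows essentially the same route as the paper, which also deduces convexity and symmetry from linearity of $\mathscr{U}\mapsto\overline{\GG}(x,\mathscr{U})$ on the unit ball of $L^\infty$ and compactness from the boundedness of $\GG(\cdot,x)$ on $S^1$. The only difference is that you spell out, via Banach--Alaoglu and weak-$*$ continuity of the finitely many coordinate functionals, the closedness argument that the paper's one-line proof leaves implicit; this is a welcome (and needed) justification, since the image of the unit ball under a linear map into $\RR^n$ is not closed for free.
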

\begin{proof}
It is closed, convex and symmetric because it is the image of the unit ball of $L^\infty\left(S^1, \RR^m \right)$ by a linear map; it is compact because $\GG(x,.)$ is bounded on $S^1$.
\hfill\end{proof}

\smallskip

Further characterizations of $\EE(x)$ use the map $H:\mathrm{T}^*\vari\to[0,+\infty)$ defined by
\begin{equation}
\label{eq:Hamiltonian}
H(x,p) = \frac{1}{2\pi}\int_0^{2\pi} \| \langle p , \GG(\theta,x) \rangle \| \mathrm{d}\theta
\end{equation}
where the Euclidean norm is used according to \S\ref{sec-not-Rm} and, for each $(\theta,x)$,
\begin{equation}
  \label{eq:12}
  \langle p, \GG(\theta,x) \rangle=\left(\langle p,\GG_1(\theta,x)\rangle,\ldots,\langle p,\GG_m(\theta,x)\rangle\right)\in (\RR^m)^*\,.
\end{equation}
\begin{proposition}
\label{lem-E-H}
For all $(x,p)\in T^*\vari$, one has, with $H$ defined in \eqref{eq:Hamiltonian},
  \begin{gather}
   \label{eq:EH}
   \EE(x)=
\Bigl\{v\in\mathrm{T}_x\vari\,,\ \sup_{\substack{p\in\mathrm{T}^*_x\vari\\H(x,p)\leq1}}\!\!\langle p,v
\rangle\;\leq\;1\Bigr\}
\,,
\\
    \label{eq:EHbis}
    H(x,p)\ =\sup_{v\in\EE(x)}\langle p,v \rangle\ =
% \sup_{\substack{\mathscr{U}\in L^\infty\left(S^1, \RR^m \right)\\\|\mathscr{U}\|_{\infty}\leq1}}
\sup_{ \mathscr{U}\in L^\infty\left(S^1, \RR^m \right),\;\|\mathscr{U}\|_{\infty}\leq1}
\langle p,\overline{\GG}(x,\mathscr{U}) \rangle
\ =\ \langle p,\overline{\GG}(x,\mathscr{U}^*_{p,x}) \rangle
\,,
\\
\!\!\!\!\text{with $\mathscr{U}^*_{p,x}\!\in\! L^\infty\left(S^1, \RR^m \right)$ defined by: }
\ 
\mathscr{U}^*_{p,x}(\theta)=
\begin{cases}
    0&\text{if }\langle p, \GG(\theta,x) \rangle=0\,,
\\
  \label{eq:Ustarpx}
    \frac{\langle p, \GG(\theta,x) \rangle^\top}{\|\langle p, \GG(\theta,x) \rangle\|}
    &\text{if }\langle p, \GG(\theta,x) \rangle\neq0\,.
\end{cases}
\end{gather}
% with $\mathscr{U}^*_{p,x}\in L^\infty\left(S^1, \RR^m \right)$ defined by: 
% (the transposition ``$^\top\!$'' takes place in $\RR^m$ \S\ref{sec-not-Rm}):
% \begin{equation}
%   \label{eq:Ustarpx}
% \mathscr{U}^*_{p,x}(\theta)=
% \begin{cases}
%     0&\text{if }\langle p, \GG(\theta,x) \rangle=0\,,
% \\
%     \frac{\langle p, \GG(\theta,x) \rangle^\top}{\|\langle p, \GG(\theta,x) \rangle\|}
%     &\text{if }\langle p, \GG(\theta,x) \rangle\neq0\,.
% \end{cases}
% \end{equation}
%\texttt{Obviously $\mathscr{U}^*_{p,x}=0$ if $\theta\mapsto\langle p, \GG(\theta,x) \rangle$ is zero, and $\|\mathscr{U}^*_{p,x}\|_\infty=1$ otherwise.}
\end{proposition}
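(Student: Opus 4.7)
The plan is to first compute the support function of $\EE(x)$ explicitly, recognize it as $H(x,\cdot)$, and then recover (\ref{eq:EH}) from a standard bipolar duality. The key observation is that maximizing $\langle p,\overline{\GG}(x,\mathscr{U})\rangle$ over $\|\mathscr{U}\|_\infty\le1$ reduces to a pointwise maximization in $\theta$, which admits a closed form via Cauchy--Schwarz.

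To prove the two rightmost equalities in (\ref{eq:EHbis}), I would pull the linear form $\langle p,\cdot\rangle$ inside the integral defining $\overline{\GG}$ to get
\[
\langle p,\overline{\GG}(x,\mathscr{U})\rangle=\frac{1}{2\pi}\int_0^{2\pi}\langle p,\GG(\theta,x)\rangle\,\mathscr{U}(\theta)\,\mathrm{d}\theta.
\]
Cauchy--Schwarz applied pointwise bounds the integrand by $\|\langle p,\GG(\theta,x)\rangle\|$, whence the integral is bounded by $H(x,p)$. The candidate $\mathscr{U}^*_{p,x}$ from (\ref{eq:Ustarpx}) attains equality pointwise by construction; it is measurable (as the composition of the smooth $\theta\mapsto\langle p,\GG(\theta,x)\rangle$ with the map $w\mapsto w^\top/\|w\|$ extended by $0$ at the origin) and has sup-norm at most $1$, hence is admissible. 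Integrating yields $\langle p,\overline{\GG}(x,\mathscr{U}^*_{p,x})\rangle=H(x,p)$, proving both equalities. The first equality in (\ref{eq:EHbis}) is immediate from the definition (\ref{eq:calE}) of $\EE(x)$.

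For (\ref{eq:EH}), I would invoke Proposition \ref{lem-conv}: $\EE(x)$ is convex, compact and symmetric with respect to $0$, hence closed and containing the origin. The previous step identifies $H(x,\cdot)$ as the support function of $\EE(x)$, so $\{p:H(x,p)\le1\}$ is the polar set $\EE(x)^\circ$, and (\ref{eq:EH}) becomes exactly the bipolar identity $\EE(x)^{\circ\circ}=\EE(x)$, which holds in finite dimensions for closed convex sets containing $0$.

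The main care point will be checking that $\mathscr{U}^*_{p,x}$ is a bona fide element of $L^\infty(S^1,\RR^m)$ of norm $\le 1$, but this is routine given the smoothness of $\GG$. Beyond that, the argument factors cleanly into a pointwise Cauchy--Schwarz maximization and a finite-dimensional bipolar duality, with no substantive analytic obstacle.
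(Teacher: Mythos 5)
Your proposal is correct and follows essentially the same route as the paper: you identify $H(x,\cdot)$ as the support function of $\EE(x)$ via the pointwise maximization realized by $\mathscr{U}^*_{p,x}$, and then recover \eqref{eq:EH} from convex duality. The only (cosmetic) difference is that you invoke the bipolar theorem for the closed convex set $\EE(x)\ni 0$, whereas the paper cites the equivalent fact that a closed convex set is the intersection of its supporting half-spaces, together with the homogeneity of $H(x,\cdot)$ in $p$.
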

\begin{proof}
  The last equality in (\ref{eq:EHbis}) is a straightforward maximization, the second one comes from the
definition (\ref{eq:calE}) of $\EE(x)$ and a simple computation yields
$H(x,p)=\langle p,\overline{\GG}(x,\mathscr{U}^*_{p,x}) \rangle$; this proves
(\ref{eq:EHbis}).
Being closed and convex, $\EE(x)$ is the intersection of all its supporting half-spaces~\cite[Corollary 1.3.5]{Schn93}; according to (\ref{eq:EHbis}), this yields the following relation, equivalent to (\ref{eq:EH}):
$\EE(x)=  \bigcap_{p\in\mathrm{T}^*_x\vari}\left\{v\in\mathrm{T}_x\vari\,,\;\langle p,v \rangle\leq H(x,p)\right\}$.\hfill
\hfill\end{proof}

\medskip

\paragraph{A convenient characterization of solutions of \eqref{eq-sysM}}
According to  Definition~\ref{def-aver}, a solution $x(.)$ is such that, for almost all $t$, there is 
$\mathscr{U}(t)\in L^\infty([0,2\pi],\RR^m)$ such that
$\dot{x}(t)=\overline{\GG}(x(t),\mathscr{U}(t))$; the map $(t,\theta)\mapsto\mathscr{U}(t)(\theta)$ is measurable with
respect to $\theta$ only. 
It turns out that it may always be chosen jointly measurable with respect to $(t,\theta)$ according to the following
``measurable selection'' result:

%LEMMA SOLUTIONS OF THE AVERAGE CONTROL SYSTEM
\begin{proposition}
\label{lem-meas}
 A map $\III:[0,T]\to \RR^n$ is a solution of the differential inclusion~\eqref{eq-sysM} if and only if there
 exists $\widehat{u}\in L^\infty([0,T]\times S^1,\RR^m)$, $\|\widehat{u}\|_\infty\leq1$ such that
\begin{equation}
   \label{eq:038}
   \dot{\III}(t)=\frac1{2\pi}\int_0^{2\pi}
 \GG(\theta,\III(t))\,\widehat{u}(t,\theta)\,
 \mathrm{d}\theta 
 \end{equation}
for almost all $t$ in $[0,T]$.
\end{proposition}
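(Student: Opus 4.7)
My plan is to split the equivalence into its two directions. The forward implication (``if'' side) is routine: given $\widehat u$ with $\|\widehat u\|_\infty\leq1$, Fubini gives that the slice $\widehat u(t,\cdot)$ lies in the unit ball of $L^\infty(S^1,\RR^m)$ for almost every $t$, and~\eqref{eq:038} then reads $\dot\III(t)=\overline\GG(\III(t),\widehat u(t,\cdot))\in\EE(\III(t))$ by~\eqref{eq:calE}.

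For the converse I set up a measurable-selection problem. Let $\mathcal B$ be the closed unit ball of $L^\infty(S^1,\RR^m)$ endowed with the weak-$*$ topology, which is compact and metrizable because $L^1(S^1,\RR^m)$ is separable. The map $\Phi:(t,\mathscr U)\mapsto\overline\GG(\III(t),\mathscr U)$ is Carath\'eodory: weak-$*$ continuous in $\mathscr U$ since each $\GG_i(\cdot,\III(t))$ belongs to $L^1(S^1)$, and continuous in $t$ since $\III$ is absolutely continuous. Because the hypothesis places $\dot\III(t)$ in $\EE(\III(t))=\Phi(t,\mathcal B)$ for a.e.\ $t$, Filippov's implicit function theorem furnishes a weak-$*$ Borel measurable selection $\mathscr U^\star:[0,T]\to\mathcal B$ with $\Phi(t,\mathscr U^\star(t))=\dot\III(t)$ almost everywhere.

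The remaining step, which I expect to be the main obstacle, is to turn $\mathscr U^\star$ into a \emph{jointly} measurable $\widehat u(t,\theta)$: since $L^\infty$ is not norm-separable, Pettis' theorem is not directly available there. My plan is to go through $L^2(S^1,\RR^m)$. On the bounded set $\mathcal B$ the weak-$*$ topology coincides with the trace of the weak topology of the separable Hilbert space $L^2(S^1,\RR^m)$, because $L^2$ is dense in $L^1$ on the finite-measure space $S^1$. Thus $\mathscr U^\star$, viewed as $L^2$-valued, is weakly measurable, hence strongly measurable by Pettis, hence the $L^2$-limit of a sequence of simple functions $t\mapsto\sum_k\mathbf 1_{A^n_k}(t)\,V^n_k$ with $V^n_k\in L^2(S^1,\RR^m)$; each of these is visibly jointly measurable in $(t,\theta)$. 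Extracting a subsequence that converges in $L^2([0,T]\times S^1)$ and then, after one more extraction, pointwise almost everywhere, yields a jointly measurable limit $\widehat u$ whose slice agrees with $\mathscr U^\star(t)$ for almost every $t$. A pointwise truncation to $B^m$ (which $\mathscr U^\star(t)$ lies in a.e.) delivers the bound $\|\widehat u\|_\infty\leq1$ without modifying the slices; integrating in $\theta$ then recovers~\eqref{eq:038}.
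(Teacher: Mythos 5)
Your proof is correct, but it follows a genuinely different route from the paper's. For the converse you argue abstractly: Filippov's selection theorem applied to the Carath\'eodory map $(t,\mathscr{U})\mapsto\overline{\GG}(\III(t),\mathscr{U})$ on the weak-$*$ compact metrizable ball of $L^\infty(S^1,\RR^m)$, then the coincidence of the weak-$*$ and weak-$L^2$ topologies on that ball, Pettis' theorem, and the identification $L^2([0,T];L^2(S^1))\simeq L^2([0,T]\times S^1)$ to produce a jointly measurable representative (your passage from the Pettis simple functions to convergence in $L^2([0,T]\times S^1)$ uses the uniform bound on the ball plus dominated convergence, which is routine and worth one line). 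The paper instead stays inside the Hilbert space $L^2([0,T],\RR^n)$: it forms the image $\mathscr{I}$ of the unit ball of $L^\infty([0,T]\times S^1,\RR^m)$ under the averaging operator $\phi$, projects $\dot\III$ onto the closed convex set $\mathscr{I}$, and feeds the explicit pointwise maximizer $\mathscr{U}^*_{p,x}$ of Proposition~\ref{lem-E-H}, taken with $p=(\dot\III(t)-\bar\xi(t))^\top$, into the variational inequality characterizing the projection; this forces the projection $\bar\xi$ to equal $\dot\III$, so that $\dot\III\in\mathscr{I}$, i.e.\ a jointly measurable control exists by construction. What each approach buys: yours is structure-free --- it cleanly separates the selection problem from the joint-measurability problem and would work for any Carath\'eodory parametrization of the velocity set, not just the bilinear $\overline{\GG}$ --- at the cost of invoking Filippov, Pettis and the topological identification; the paper's argument avoids all selection machinery by exploiting the explicit maximizer coming from the specific form of $\overline{\GG}$, though it silently relies on the closedness of $\mathscr{I}$ in $L^2$, a fact whose justification (weak-$*$ compactness of the $L^\infty$ ball and weak continuity of $\phi$) is of the same nature as the functional-analytic tools you use openly.
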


%-------------PROOF
\begin{proof}
After possibly partitioning $[0,T]$ into intervals where $\dot{\III}(t)$ remains in the same coordinate
chart, we work in coordinates and use a Euclidean norm when useful.

Sufficiency is clear:
from Fubini theorem, $\theta\mapsto\widehat{u}(t,\theta)$ is measurable for almost all $t$, hence
$\III(.)$ is a solution of (\ref{eq-sysM}).
Conversely, let $\III(.)$ be a solution of \eqref{eq-sysM}: $\dot{\III}(.)$ is measurable and, for almost all $t$, there exists $\tilde{u}_t\in L^\infty(S^1,\RR^m)$, $\|\tilde{u}_t\|_\infty\leq1$ such that
\begin{equation}
  \label{eq:005}
  \dot{\III}(t)=\overline{\GG}(\III(t),\tilde{u}_t)=\frac1{2\pi}\int_0^{2\pi}\GG(s_1,\III(t))\tilde{u}_t(s_1)\mathrm{d}s_1\,.
\end{equation}
Let $\phi:L^\infty\left([0,T]\times S^1,\RR^m\right)\to L^2\left([0,T],\RR^n\right)$ be the linear map defined by 
 $$
 \phi(u)(t)=\overline{\GG}(x(t),u(t,.))=\frac{1}{2\pi}\int_0^{2\pi} \GG(s_1,\III(t))\,u(t,s_1)\, \mathrm{d}s_1
 $$
and $\mathscr{I}$ the image by $\phi$ of the unit ball of $L^{\infty}([0,T]\!\times\! S^1,\RR^m)$. 
Since,  by (\ref{eq:005}), $\dot{\III}(.)$ is essentially bounded, it is in $L^2\left([0,T],\RR^n\right)$; since 
$\mathscr{I}$ is closed and convex in that Hilbert space,  the distance from $\dot\III$ to $\mathscr{I}$
is reached for a unique element $\bar\xi\in\mathscr{I}$:
\begin{displaymath}
  \bar\xi=\phi(\bar{u})\,,\ \ \bar u\in L^\infty\left([0,T]\times S^1,\RR^m\right)\,,\ \ \|\bar u\|_{L^\infty}\leq1\;.
\end{displaymath}
Let us prove by contradiction that $\bar\xi=\dot{\III}$, \textit{i.e.} $\dot\III(.)\in \mathscr{I}$; this will end the
proof.

If $\dot{\III}\neq\bar\xi$, one has, for all $u$ in the unit ball of $L^\infty\left([0,T]\times S^1,\RR^m\right)$,
\begin{equation}
  \label{eq:007}
  \left(\left.\dot\III-\bar\xi\,\right|\phi(u)-\phi(\bar{u})\right)_{L^2}\ \leq\ 0
\end{equation}
with equality only if $\phi(u)=\phi(\bar{u})$.
Define $\widehat{u}$ by $\displaystyle
\widehat{u}(t,s)=
\mathscr{U}^*_{\left(\dot\III(t)-\bar\xi(t)\right)^\top,\,x(t)}(s)
$ with $\mathscr{U}^*_{p,x}$ defined by (\ref{eq:Ustarpx}); clearly, $\widehat{u}$ is in the unit ball of
$L^\infty\left([0,T]\times S^1,\RR^m\right)$, and, for all $(t,s)\in[0,T]\times S^1$ and all $\mathbf{u}\in\RR^m$,
\begin{equation}
\label{eq:004}
  \|\mathbf{u}\|\leq1\ \;\Rightarrow\;\ 
  \left(\dot\III(t)-\bar\xi(t)\right)^\top\GG(s,\III(t))\,\left(\widehat{u}(t,s)-\mathbf{u}\right)\geq0\;,
\end{equation}
hence $\left(\dot\III(t)-\bar\xi(t)\right)^{\!\top}\!\GG(s_1,\III(t))\,\left(\widehat{u}(t,s_1)-\bar{u}(t,s_1)\right)$ is
non-negative for almost all $(t,s_1)$ and, since it is the integrand of the left-hand side of (\ref{eq:007}), it must be
zero; hence $\bar\xi=\phi(\bar{u})=\phi(\widehat{u})$ and
$\bar\xi(t)=\overline{\GG}(\III(t),\widehat{u}(t,.))$ for almost all $t$.
\\In (\ref{eq:005}), $\tilde{u}_t$ satisfies $\|\tilde{u}_t(s_1)\|\leq1$ for almost all $s_1$, hence,
according to (\ref{eq:004}),
$$\left(\dot\III(t)-\bar\xi(t)\right)^\top\GG(s_1,\III(t))\,\left(\widehat{u}(t,s_1)-\tilde{u}_t(s_1)\right)\geq0\;.
$$
Since $\dot{\III}(t)=\overline{\GG}(\III(t),\tilde{u}_t)$, $\bar\xi(t)=\overline{\GG}(\III(t),\widehat{u}(t,.))$,
the integration with respect to the variable $s_1$ yields
$-\|\dot\III(t)-\bar\xi(t)\|^2\geq0$ for almost all $t$; this contradicts $\dot{\III}\neq\bar\xi$.
\hfill\end{proof}

\smallskip

%%%%%%%%%%%%%%%
% REMARK :
%%%%%%%%%%%%%%%
\refstepcounter{theorem}\paragraph{Remark~\thetheorem}\label{rmk-controlDimInf}
The differential inclusion (\ref{eq-sysM}) is equivalent to the ``control system''
\\[1.1ex]
\hspace*{1ex}\hfill
$\dot{x}=\overline{\GG}(x,\mathscr{U})\,,\ \ \ \mathscr{U}\in L^\infty(S^1,\RR^m)\,,\
\|\mathscr{U}\|_{\infty}\leq1 $
\hfill\hspace*{1ex}
\\[1.1ex]
where, % $\overline{\GG}$ is smooth with respect to $x$ and linear with respect to $\mathscr{U}$ and,
by Proposition~\ref{lem-meas}, admissible controls 
are maps $t\mapsto\mathscr{U}(t)$ such
that $\widehat{u}\!: (t,\theta)\mapsto\mathscr{U}(t)(\theta)$ is measurable
with respect to $(t,\theta)$.
Since this ``control'' is infinite dimensional, and we could not find a representation 
of the type 
$\dot x=f(x,v),\,v\in U\subset\RR^r$, $r$ finite, we stay with the differential inclusion 
(\ref{eq-sysM}), with $\EE(x)$ described by \eqref{eq:EHbis} and \eqref{eq:Hamiltonian}.

%%%%%%%%%%%%%%%
% END OF REMARK
%%%%%%%%%%%%%%%

%%%%%%%%%%%%%%%%%%%%%%%%%%%%%%%%%%%%%%%%%%%%%
% SUB SECTION
% CONVERGENCE THEOREM
%%%%%%%%%%%%%%%%%%%%%%%%%%%%%%%%%%%%%%%%%%%%%

\subsection{Convergence theorem}
\label{sec-fast-conv}
The following result relates solutions of the fast oscillating systems as $\varepsilon$ tends to zero to
solutions of the average system.
To our knowledge, this kind of theorem where the control is not chosen prior to averaging has never been stated in the
literature.

%THEOREM
\begin{theorem}[Convergence for fast-oscillating control systems]
  \label{th-conv-sysM}
  \begin{remunerate}
  \item\label{it-thconv-a} 
    Let $\III_0(.):[0,T]\to\vari$ be an arbitrary solution of \eqref{eq-sysM}.
    There exist a family of measurable functions $\overline{u}_\varepsilon(.):[0,T]\to B^m$, indexed by $\varepsilon>0$, and positive constants $c,\varepsilon_0$,
    such that, calling $\III_{\varepsilon}(.)$ the solution of \eqref{eq-sysO} with control
    $u=\overline{u}_\varepsilon(t)$ and initial condition $\III_{\varepsilon}(0)=\III_0(0)$, one has:
\quad
    $\III_{\varepsilon}(.)$ is defined on $[0,T]$ for all $\varepsilon$ smaller than $\varepsilon_0$ and converges to
    $\III_0(.)$ as $\varepsilon\to0$, with an error of uniform order $\varepsilon$:
     \begin{equation}
       \label{eq:CVosc}
       \mathit{d}(\III_{\varepsilon}(t),\III_0(t))<c\,\varepsilon\,,\ \ t\in[0,T]\,,\;0<\varepsilon<\varepsilon_0\,.
     \end{equation}

\item\label{it-thconv-b} Let $\Set{K}$ be a compact subset of $\vari$, 
$(\varepsilon_n)_{n\in\NN}$ a decreasing sequence of positive real numbers converging to zero,
and, for each $n$, $\III_{n}(.)\!:[0,T]\to\Set{K}$ a %n absolutely continuous map 
solution of~(\ref{eq-sysO}) with $\varepsilon=\varepsilon_n$ and for some control $u=u_n(t)$, $u_n(.)\in L^\infty([0,T],\RR^m)$, $\|u_n(.)\|_\infty\leq1$.
Then the sequence $\bigl(\III_{n}(.)\bigr)_{n\in\NN}$ is compact for the topology of uniform convergence on
$[0,T]$ and any accumulation point is a solution of the average system~\eqref{eq-sysM}.

 \end{remunerate}
\end{theorem}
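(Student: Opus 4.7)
For part~(\ref{it-thconv-a}), I would start from the jointly-measurable selector $\widehat u\in L^\infty([0,T]\times S^1, B^m)$ provided by Proposition~\ref{lem-meas}, satisfying $\dot\III_0(t)=\overline{\GG}(\III_0(t),\widehat u(t,\cdot))$ almost everywhere. The naive choice $\overline{u}_\varepsilon(t)=\widehat u(t,t/\varepsilon)$ would only give $o(1)$ convergence because $\widehat u$ has no regularity in~$t$, so I would instead average over one full fast period at a time: partitioning $[0,T]$ into intervals $[t_k,t_{k+1}]$ of length $2\pi\varepsilon$, set
\[
  \mathscr{U}_k^\varepsilon(\theta) \;:=\; \frac{1}{2\pi\varepsilon}\int_{t_k}^{t_{k+1}}\widehat u(s,\theta)\,ds,
  \qquad
  \overline{u}_\varepsilon(t) \;:=\; \mathscr{U}_k^\varepsilon(t/\varepsilon)\ \text{for } t\in[t_k,t_{k+1}).
\]
Each $\mathscr{U}_k^\varepsilon$ remains in the unit ball of $L^\infty(S^1,\RR^m)$ by convexity, so $\overline{u}_\varepsilon$ takes values in $B^m$ and is measurable.

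The per-period estimate is then the heart of the argument. On $[t_k,t_{k+1}]$, freezing $\III_\varepsilon$ at~$t_k$ costs $O(\varepsilon^2)$ by the Lipschitz regularity of $\GG(\theta,\cdot)$ on a compact neighborhood, after which the substitution $\tau=s/\varepsilon$ together with the $2\pi$-periodicity of $\GG(\cdot,x)$ give exactly $2\pi\varepsilon\,\overline{\GG}(\III_\varepsilon(t_k),\mathscr{U}_k^\varepsilon)$. Symmetrically, freezing $\III_0$ at~$t_k$ also costs $O(\varepsilon^2)$, and then the linearity of $\overline{\GG}(x,\cdot)$ combined with the definition of $\mathscr{U}_k^\varepsilon$ produces $2\pi\varepsilon\,\overline{\GG}(\III_0(t_k),\mathscr{U}_k^\varepsilon)$. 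Subtracting, $e_k:=d(\III_\varepsilon(t_k),\III_0(t_k))$ obeys $e_{k+1}\leq(1+2\pi L\varepsilon)e_k+C\varepsilon^2$; a discrete Gronwall argument over the $O(1/\varepsilon)$ periods yields $e_k\leq C'\varepsilon$, and intermediate times are handled by adding the $O(\varepsilon)$ motion within one period. A bootstrap argument keeps $\III_\varepsilon$ in a fixed compact tube around $\III_0([0,T])$ for $\varepsilon<\varepsilon_0$, confirming that $\III_\varepsilon$ is defined on all of $[0,T]$.

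For part~(\ref{it-thconv-b}), compactness is immediate from the uniform bound $\|\dot\III_n\|_\infty\leq\sup\{\|\GG(\theta,x)\|:\theta\in S^1,\,x\in\Set{K}\}$ and Arzelà--Ascoli, which extracts a uniformly convergent subsequence with limit $\III_\infty$. To show $\III_\infty$ solves~(\ref{eq-sysM}), I use the dual description~(\ref{eq:EH}): it suffices to establish $\langle p,\dot\III_\infty(t)\rangle\leq H(\III_\infty(t),p)$ for a.e.~$t$ and all~$p$. Fix $p$ (constant in a coordinate chart); pointwise one has $\langle p,\dot\III_n(s)\rangle\leq\|\langle p,\GG(s/\varepsilon_n,\III_n(s))\rangle\|$, and integrating over $[t_1,t_2]$ gives an inequality whose left side converges to $\langle p,\III_\infty(t_2)-\III_\infty(t_1)\rangle$ by uniform convergence. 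The right side, after using uniform convergence to replace~$\III_n$ by~$\III_\infty$, is an oscillatory integral $\int f_p(s,s/\varepsilon_n)\,ds$ whose integrand $f_p(s,\theta)=\|\langle p,\GG(\theta,\III_\infty(s))\rangle\|$ is continuous in~$s$ and $2\pi$-periodic in~$\theta$; classical averaging yields the limit $\int_{t_1}^{t_2} H(\III_\infty(s),p)\,ds$. Lebesgue differentiation then gives the pointwise inequality for a.e.~$t$, and taking $p$ in a countable dense subset of each chart's $(\RR^n)^*$ and invoking continuity of the sublinear $H(x,\cdot)$ upgrades this to hold simultaneously for all~$p$ on a common full-measure set.

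The main technical obstacle is preserving a genuine $O(\varepsilon)$ rate in part~(\ref{it-thconv-a}) despite $\widehat u$ being merely measurable in~$t$; the period-average construction circumvents this precisely because $\overline{\GG}(x,\cdot)$ is linear in its $L^\infty$ argument and the period $2\pi\varepsilon$ matches the fast oscillation exactly, so no regularity in~$t$ is needed beyond essential boundedness. In part~(\ref{it-thconv-b}), the only subtlety is the quantifier exchange over~$p$, dispatched by the standard countable-dense-set trick.
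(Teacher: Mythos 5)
Your argument is correct, and it splits into a part that mirrors the paper and a part that genuinely departs from it. For point~\ref{it-thconv-a}, your construction is essentially the paper's: the control defined in \eqref{eq:ueps-bar} is precisely the sliding-window version of your $\mathscr{U}_k^\varepsilon$ (an average of $\widehat u_0(\cdot,t/\varepsilon)$ over one fast period adjacent to $t$), and the paper's Lemma~\ref{lem-chvar} together with a continuous Gronwall inequality plays the role of your per-period change of variables and discrete Gronwall recursion; fixed partition versus sliding window changes nothing of substance. Be aware, though, that you assert rather than prove the final step: the paper devotes its Step~2 (cutoff of $\GG$, localization in coordinate charts, Lipschitz equivalence of $d$ with the Euclidean norm, and a maximal-time contradiction argument) to showing that $\III_\varepsilon$ stays in a compact tube and is defined on all of $[0,T]$ when $\vari$ is a general manifold, so your ``bootstrap'' sentence stands in for a nontrivial piece of the proof. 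For point~\ref{it-thconv-b}, your route is genuinely different. The paper reparametrizes the controls as $\widehat u_n(t,\theta)=u_n(\beta_n(t,\theta))$, extracts a weak-$*$ limit $\widehat u^*$ in $L^\infty([0,T]\times S^1,\RR^m)$, and identifies the limit dynamics via Lemma~\ref{lem-chvar} and Proposition~\ref{lem-meas}, thereby exhibiting an explicit limit control that drives the accumulation trajectory. You bypass any limit control and verify $\dot\III_\infty(t)\in\EE(\III_\infty(t))$ through the support-function criterion of Proposition~\ref{lem-E-H}/\eqref{eq:EH}: the pointwise bound $\langle p,\dot\III_n(s)\rangle\le\|\langle p,\GG(s/\varepsilon_n,\III_n(s))\rangle\|$, a classical oscillatory-averaging lemma for integrands continuous in $s$ and periodic in $\theta$, Lebesgue differentiation, and the countable-dense-in-$p$ argument upgraded by homogeneity and continuity of $H(x,\cdot)$. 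This is sound, since the paper itself establishes $\EE(x)=\bigcap_{p}\{v:\langle p,v\rangle\le H(x,p)\}$, and it is arguably more elementary; what it buys less of is information, because the paper's weak-$*$ argument also produces a measurable $\widehat u^*$ realizing the limit motion in the form \eqref{eq:038}, which is the structure reused later (Kepler systems, minimum-time results), whereas your proof only certifies membership in the differential inclusion.
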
 

\smallskip

The statement is more complex than the
one for ODEs, \textit{e.g.} \cite[\S52.C]{Arno89},
due to underdetermination (choice of control in \eqref{eq-sysO}, multi-valued right-hand side in \eqref{eq-sysM}). 

\emph{Informally,} ``\ref{it-thconv-a}''  states that any solution of the average system is the
limit of solutions of fast oscillating systems with well chosen controls and ``\ref{it-thconv-b}'' states that,
conversely, any limit of solutions of oscillating systems, with arbitrary controls,
is a solution of the average control system. There is an estimate on the error in ``\ref{it-thconv-a}'' but not in
``\ref{it-thconv-b}'' because some sequences may converge slower than others.

% The statement of this result is more complex than for ODEs; it is due to the fact that solutions $x(.)$, either for a control system or a differential
% inclusion, depend not only on $x(0)$ but on an infinite-dimensional ``choice'': in fact, under the (false) assumption
% that x(0) uniquely determines a solution, ``\ref{it-thconv-a}'' implies ``\ref{it-thconv-b}''.

%%%%%%%%%%%%%%%
% REMARK :
%%%%%%%%%%%%%%%
\refstepcounter{theorem}\paragraph{Remark~\thetheorem}\label{rmk-thconv} 
One may consider systems
that are \emph{affine} instead of linear in the control by adding a drift vector field $\GG_0(t/\varepsilon,x)$ to
\eqref{eq-sysO}. Then, in the average control system, $\EE(x)$ is replaced by $\overline{\GG}_0(x)+\EE(x)$, with
$\overline{\GG}_0(x)=\frac1{2\pi}\int_0^{2\pi}\GG_0(\theta,x)\mathrm{d}\theta$. By a straightforward extension, convergence does hold for these systems too.

%%%%%%%%%%%%%%%
% END OF REMARK
%%%%%%%%%%%%%%%

\smallskip
In the proof of Theorem~\ref{th-conv-sysM}, the following technical lemma is needed.

\begin{lemma}
  \label{lem-chvar}
Let $\varepsilon>0$ and $a<b$ be real numbers and 
$\widehat{u}:[a-2\pi\varepsilon,b]\times S^1\to\RR^m$ be measurable. One has the following identity
(see \S\ref{sec-not-S1} for the notation $\mu(.)$)~:
\begin{equation}
  \label{eq:052}
  \iint_{\substack{\\[1.2ex]\theta\in S^1\\a\leq s\leq b}}\hspace{-1.5ex}
  \GG(\theta,x(s))\;\widehat{u}(s,\theta)\; \mathrm{d}\theta\, \mathrm{d}s
  =
  \iint_{\substack{\\[1.2ex] \theta\in S^1\\a\leq s\leq b}}\hspace{-1.2ex}
\GG(\frac s\varepsilon,\III(s))\; \widehat{u}(s+\varepsilon\mu(\theta),\frac s\varepsilon)\; \mathrm{d}\theta\, \mathrm{d}s
+\Delta_\varepsilon
\end{equation}
% \begin{equation}
%   \label{eq:052}
%   \iint_{\substack{\theta\in S^1\\a\leq s\leq b}}\hspace{-.5ex}
%   \GG(\theta,x(s))\;\widehat{u}(s,\theta)\; \mathrm{d}\theta\, \mathrm{d}s
%   =
%   \iint_{\substack{\theta\in S^1\\a\leq s\leq b}}\hspace{-.2ex}
% \GG(\frac s\varepsilon,\III(s))\; \widehat{u}(s+\varepsilon\mu(\theta),\frac s\varepsilon)\; \mathrm{d}\theta\, \mathrm{d}s
% +\Delta_\varepsilon
% \end{equation}

\smallskip

\noindent
with\\[-3.3em]
\begin{eqnarray}
\nonumber
%\text{with}\hspace{1em}
\Delta_\varepsilon &=&\iint_{T^a_\varepsilon} \GG(\frac s\varepsilon,\III(s+\varepsilon\mu(\theta)))\, 
\widehat{u}(s+\varepsilon\mu(\theta),\frac s\varepsilon)\, \mathrm{d}\theta\,\mathrm{d}s
\\\nonumber&&-\iint_{T^b_\varepsilon} \GG(\frac s\varepsilon,\III(s+\varepsilon\mu(\theta)))\,
\widehat{u}(s+\varepsilon\mu(\theta),\frac s\varepsilon)\, \mathrm{d}\theta\,\mathrm{d}s
\\ 
\label{eq:053}
&&+
\iint_{\substack{\theta\in S^1\\a\leq s\leq b}}
\left[\GG(\frac s\varepsilon,\III(s+\varepsilon\mu(\theta)))-\GG(\frac s\varepsilon,\III(s))\right] \widehat{u}(s+\varepsilon\mu(\theta),\frac s\varepsilon)\, \mathrm{d}\theta\,\mathrm{d}s
\end{eqnarray}
and the set $T^a_\varepsilon$ defined by $T^a_\varepsilon\!=\!\{(s,\theta),\,\theta\!\in\! S^1\!,\,a-\varepsilon\,\mu(\theta)\leq s\leq a\}$ and $T^b_\varepsilon$ accordingly.
\end{lemma}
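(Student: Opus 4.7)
The plan is to read \eqref{eq:052} as a change-of-variables identity in which $\Delta_\varepsilon$ records the discrepancy between the natural and the imposed domain of integration. First I would group the contributions on the right: set
\[
F(s,\theta)\;=\;\GG(\tfrac{s}\varepsilon,\III(s+\varepsilon\mu(\theta)))\,\widehat{u}(s+\varepsilon\mu(\theta),\tfrac{s}\varepsilon)\,.
\]
The third (``difference'') double integral in \eqref{eq:053} combined with the main right-hand side of \eqref{eq:052} produces exactly $\iint_{[a,b]\times S^1}F\,\mathrm{d}\theta\,\mathrm{d}s$. Adding the two boundary integrals over $T^a_\varepsilon$ and $T^b_\varepsilon$ with their signs, and using the elementary set identity
\[
([a,b]\times S^1)\,\cup\, T^a_\varepsilon\,\setminus\, T^b_\varepsilon \;=\; R_\varepsilon\;:=\;\{(s,\theta)\in\RR\times S^1 : a-\varepsilon\mu(\theta)\le s\le b-\varepsilon\mu(\theta)\},
\]
reduces \eqref{eq:052} to the equivalent claim
\[
\iint_{[a,b]\times S^1}\GG(\theta,\III(s))\,\widehat{u}(s,\theta)\,\mathrm{d}\theta\,\mathrm{d}s
\;=\; \iint_{R_\varepsilon} F(s,\theta)\,\mathrm{d}\theta\,\mathrm{d}s.
\]

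Second, I would perform on the right the shift $s'=s+\varepsilon\mu(\theta)$ at fixed $\theta$ (Jacobian $1$, mapping $R_\varepsilon$ bijectively onto $[a,b]\times S^1$). The integrand becomes $\GG(\tfrac{s'}\varepsilon-\mu(\theta),\III(s'))\,\widehat{u}(s',\tfrac{s'}\varepsilon-\mu(\theta))$. The $2\pi$-periodicity of $\GG(\cdot,y)$ in its first argument and of $\widehat{u}(s,\cdot)$ in its second, together with $\mu(\theta)\equiv\theta\!\!\mod 2\pi$, lets me replace $\mu(\theta)$ by $\theta$ in both slots. Finally, for each fixed $s'$ the affine involution $\theta\mapsto s'/\varepsilon-\theta$ is measure-preserving on $S^1$; performing it recovers $\iint\GG(\theta,\III(s'))\widehat{u}(s',\theta)\,\mathrm{d}\theta\,\mathrm{d}s'$, which is the left-hand side of \eqref{eq:052}.

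The only mildly delicate point is the set identity $([a,b]\times S^1)\cup T^a_\varepsilon\setminus T^b_\varepsilon=R_\varepsilon$ (up to a null set), which is transparent once one sketches the trapezoidal strip in the $(s,\theta)$-plane; everything else is a straight-line translation on $\RR$ followed by a reflection on $S^1$, under which the periodicity of $\GG$ and $\widehat{u}$ absorbs the shift by $\mu(\theta)$. Measurability of the shifted integrand (needed to invoke Fubini for the swap implicit in the rewriting) follows at once from the measurability of $\widehat{u}$ on $[a-2\pi\varepsilon,b]\times S^1$ and the smoothness of $\GG$ and $\III$.
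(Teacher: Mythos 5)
Your proof is correct and is in substance the paper's own argument read in the opposite direction: the paper performs the single unit-Jacobian substitution $\theta=\tau/\varepsilon\bmod 2\pi$, $s=\tau+\varepsilon\,\mu(\phi)$ on the left-hand side of \eqref{eq:052} and attributes $\Delta_\varepsilon$ to the modified domain of integration and the modified argument of $\III$, which is exactly your bookkeeping with $R_\varepsilon$, $T^a_\varepsilon$ and $T^b_\varepsilon$. Your factorization of that substitution into a translation in $s$ at fixed $\theta$ followed by a reflection in $\theta$ at fixed $s$ (absorbed by periodicity) merely makes explicit the measure-preservation that the paper's one-step change of variables leaves implicit.
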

\begin{proof}
% The map $(\tau,\phi)\mapsto(\tau-\varepsilon\,\mu(\phi), \tau/\varepsilon\mod2\pi)$ is one-to-one from
% $\{(\tau,\phi)\in\RR\times S^1\,,\;0\leq\phi<2\pi,\,a-\varepsilon\,\phi\leq\tau\leq b-\varepsilon\,\phi\}$
% onto $[a,b]\times S^1$, with Wronskian equal to one. Performing the change of variables 
Thanks to the change of variables
$\theta=\tau/\varepsilon\!\!\mod\!2\pi$, $s=\tau+\varepsilon\,\mu(\phi)$, with $\mu(\theta)$ as defined in \S\ref{sec-not-S1}, the left-hand side of (\ref{eq:052}) is equal to
$$
\iint_{\substack{\phi\in S^1\\a-\varepsilon\phi\leq \tau\leq
    b-\varepsilon\mu(\phi)}}\GG(\frac\tau\varepsilon,\III(\tau+\varepsilon\mu(\phi)))\, \widehat{u}(\tau+\varepsilon\mu(\phi),\frac\tau\varepsilon)\,
\mathrm{d}\tau\, \mathrm{d}\phi\ .
$$
Keeping the names $(s,\theta)$ instead of $(\tau,\phi)$, one gets (\ref{eq:052}), the correcting term
$\Delta_\varepsilon$ coming from the modified domain of integration and argument of $\III$.
\hfill\end{proof}

{\em Proof of Theorem \ref{th-conv-sysM}, point \ref{it-thconv-a}}. 
Consider a solution $\III_0:[0,T]\to\vari^n$ of \eqref{eq-sysM}. 
By Proposition~\ref{lem-meas} there exists $\widehat{u}_0\in L^\infty([0,T]\times S^1,\RR^m)$, $\|\widehat{u}_0\|_\infty\leq1$ satisfying~\eqref{eq:038}.
For $\varepsilon>0$, define $\overline{u}_\varepsilon(.)\in L^\infty([0,T],\RR^m)$ 
by (see \S\ref{sec-not-S1} for notations):
 \begin{equation}
 \label{eq:ueps-bar}
 \overline{u}_\varepsilon(t)=\frac1{2\pi}\int_0^{2\pi} \widehat{u}_0(t+\varepsilon\,\mu(\theta),\frac{t}{\varepsilon})\,\mathrm{d}\theta\;,
 \end{equation}
where $\widehat{u}_0$ is prolonged by zero outside $[0,T]$: %by convention
$\widehat{u}_0(t+\varepsilon\,\mu(\theta),\frac{t}{\varepsilon})=0$ if $t+\varepsilon\,\mu(\theta)>T$.
Let us prove that this construction of $\overline{u}_\varepsilon$ satisfies the two announced properties.

\emph{Step 1. Let us first assume that $\vari$ is an open subset of $\RR^n$ and $\mathcal{G}$ is zero outside a compact subset of $\vari$.}
Then $\mathcal{G}(\theta,x)$ is a $n\times m$ matrix for all $(\theta,x)$ and, denoting by $\|.\|$ the Euclidean norm for vectors and the operator norm for matrices, there are global constants $\Lip \GG$ and $\sup\GG$ such that, for all $x,x',\theta$ in $\vari\times\vari\times S^1$,
\begin{equation}
  \label{eq:step1-2}
  \left\|\GG(\theta,x)-\GG(\theta,x')\right\|\leq\bigl(\Lip \GG\bigr)\|x-x'\|
\,,\ \ \left\|\GG(\theta,x)\right\|\leq\bigl(\sup \GG\bigr)\ .
\end{equation}
Let $b$ be a non-negative constant and consider, for each $\varepsilon>0$, a solution $\III_\varepsilon(.)$ of~\eqref{eq-sysO} with control 
$u=\overline{u}_\varepsilon(t)$ and initial condition $\III_\varepsilon(0)$ such that
\begin{equation}
  \label{eq:033}
  \|\III_\varepsilon(0)-\III_0(0)\|\leq b\,\varepsilon\;.
\end{equation}
In fact $b=0$ in the theorem itself, but we need a nonzero $b$ in step 2.
By definition, expanding $\overline{u}_\varepsilon(s)$ as in (\ref{eq:ueps-bar}) and using Lemma~\ref{lem-chvar}, one has
\begin{eqnarray}
\nonumber
 \III_\varepsilon(t) & = & 
\III_\varepsilon(0)+\frac1{2\pi}\int_0^t \! \int_0^{2\pi}\GG(\frac s\varepsilon,\III_\varepsilon(s))\, \widehat{u}_0(s-\varepsilon\,\mu(\theta),\frac{s}{\varepsilon})\, \mathrm{d}\theta\, \mathrm{d}s\,,
\\ \label{eq:031}
& = &
\III_\varepsilon(0)+\frac1{2\pi}\left(\int_0^t \! \int_0^{2\pi}
\GG(\theta,\III_\varepsilon(s))\,\widehat{u}_0(s,\theta)
\, \mathrm{d}\theta\, \mathrm{d}s
\;-\;\Delta_\varepsilon\right)
\end{eqnarray}
with $\Delta_\varepsilon$ given by (\ref{eq:053}), that satisfies $\|\Delta_\varepsilon\|\leq 4\pi^2\bigl(\Lip
\GG\bigr)\left(1+T\sup\GG\right)\varepsilon$ because, in particular, $\|\widehat{u}_0\|\leq1$, $|\varepsilon\mu(\theta)|<2\pi\varepsilon$ and
 \begin{displaymath}
\|\left(
 \GG(\frac{s}\varepsilon,\III_\varepsilon(s))
 -
 \GG(\frac{s}\varepsilon,\III_\varepsilon(s+\varepsilon\,\mu(\theta))) \right) \widehat{u}_0(s+\varepsilon\,\mu(\theta),\frac{s}{\varepsilon})
\| 
  \leq  2\pi\,\bigl(\Lip \GG\bigr) \,\bigl(\sup \GG\bigr) \,\varepsilon\;.
\end{displaymath}
Using (\ref{eq:033}), (\ref{eq:031}) the bound on $\|\Delta_\varepsilon\|$ and the relation
\begin{eqnarray*}
 \III_0(t) & = & 
\III_0(0)+\frac1{2\pi}\int_0^t \! \int_0^{2\pi} \GG(\theta,\III_0(s))\, \widehat{u}_0(s,\theta)\, \mathrm{d}\theta\, \mathrm{d}s\,,
\end{eqnarray*}
one gets
\begin{displaymath}
 \|\III_\varepsilon(t)-\III_0(t)\| \leq  \left(b+2\pi\,\bigl(\Lip \GG\bigr) \,\bigl(1+T\sup \GG\bigr)\right)\varepsilon + \bigl(\Lip \GG\bigr) \int_0^t \|\III_\varepsilon(s)-\III_0(s)\|\,\mathrm{d}s
 \end{displaymath}
for all $t$ in $[0,T]$, and finally, by Gronwall lemma,
\begin{equation}
\label{eq:gron}
 \|\III_\varepsilon(t)-\III_0(t)\|\leq
\left[b+
2\pi\bigl(\Lip \GG\bigr)\left(1+T\sup\GG\right)\right]\, e^{T\Lip \GG}\,\varepsilon
 \end{equation}
for all $t$ in $[0,T]$ and $\varepsilon$ in $[0,\varepsilon_0]$.
This proves the theorem if $\vari$ is an open subset of $\RR^n$ and $\mathcal{G}$ is zero outside a compact subset,
with an explicit constant $c$ corresponding to the distance $d$ defined from the Euclidean norm and with $\varepsilon_0=+\infty$.

\emph{Step 2. General case.}
Let $\III_\varepsilon(.)$ be the solution of~\eqref{eq-sysO} with control $u=\overline{u}_\varepsilon(t)$ defined in
(\ref{eq:ueps-bar}) from $\widehat{u}_0$ and with initial condition $\III_\varepsilon(0)=\III_0(0)$; it is not necessarily defined on $[0,T]$ but may have a maximum interval of definition $[0,T_\varepsilon)$ with $T_\varepsilon<T$.
Let $\widetilde{T}\in[0,T]$ be the supremum of the set of numbers $\tau\in[0,T]$ such that, 
for some $\varepsilon_0$ and some $c$, that may depend on $\tau$, the solution $\III_\varepsilon(.)$ is defined on $[0,\tau]$ 
%(i.e. $T_\varepsilon>\tau$ or $T_\varepsilon=\tau=T$) 
and satisfies $d(\III_\varepsilon(t),\III_0(0))<c\,\varepsilon$ for all $t\in[0,\tau]$ and $\varepsilon\in[0,\varepsilon_0]$. 
Let us prove by contradiction that $\widetilde{T}=T$. This will end the 
proof of Theorem \ref{th-conv-sysM}, point \ref{it-thconv-a}.

Assume $\widetilde{T}<T$, and let
\\- $\mathcal{O}$ be a coordinate neighborhood of $x_0(\widetilde{T})$, 
\\- $\alpha>0$ be such that $0<\widetilde{T}-\alpha<\widetilde{T}+\alpha\leq T$ and $x_0([\widetilde{T}-\alpha,\widetilde{T}+\alpha])\subset\mathcal{O}$, 
\\- $c>0$, $\varepsilon_0>0$ be such that 
$d(\III_\varepsilon(t),\III_0(t))<c\,\varepsilon$ for all $t\in[0,\widetilde{T}-\alpha]$ and
$\varepsilon\in[0,\varepsilon_0]$.
\\
Taking $\varepsilon_0$ possibly smaller, one also has $\III_\varepsilon(\widetilde{T}-\alpha)\in\mathcal{O}$ for
$\varepsilon<\varepsilon_0$.
Let $\Set{K}$ be a compact neighborhood of $x_0([\widetilde{T}-\alpha,\widetilde{T}+\alpha])$ contained in $\mathcal{O}$,
$\Set{K}'$ a compact neighborhood of $\Set{K}$ contained in $\mathcal{O}$, and $\rho:\vari\to[0,1]$ a smooth map, zero outside 
$\Set{K}'$ and constant equal to 1 in $\Set{K}$. Defining $\GG_\rho$ by $\GG_\rho(\theta,x)=\rho(x)\GG(\theta,x)$,
let us apply Step 1 in coordinates in $\mathcal{O}$, with $\GG_\rho$ instead of $\GG$ and
$[\widetilde{T}-\alpha,\widetilde{T}+\alpha]$ instead of $[0,T]$.
Call $\III_0^\rho$ (resp. $\III_\varepsilon^\rho$, $\varepsilon>0$) the solution of \eqref{eq:038}
(resp. of~\eqref{eq-sysO} with control $u=\overline{u}_\varepsilon(t)$), replacing $\GG$ by $\GG_\rho$, 
% with initial condition $\III_0^\rho(\widetilde{T}-\alpha)=\III_0(\widetilde{T}-\alpha)$
% (resp. $\III_\varepsilon^\rho(\widetilde{T}-\alpha)=\III_\varepsilon(\widetilde{T}-\alpha)$, $\varepsilon>0$)
with initial condition $\III_\varepsilon^\rho(\widetilde{T}-\alpha)=\III_\varepsilon(\widetilde{T}-\alpha)$,
$\varepsilon\geq0$.
One clearly has, as in \eqref{eq:033}, 
$\|\III_\varepsilon^\rho(\widetilde{T}-\alpha)-\III_0^\rho(\widetilde{T}-\alpha)\|<b\,\varepsilon$
with $b$ deduced from $c$ via Lipschitz equivalence of the distance $d$ and the Euclidean norm in coordinates
(see \S\ref{sec-not-d}); then Step 1 provides $\varepsilon_0'>0$ such that, by \eqref{eq:gron},  the inequality
\begin{equation}
\label{eq:gron2}
 \|\III_\varepsilon^\rho(t)-\III_0^\rho(t)\|\leq
\left[b+
2\pi\bigl(\Lip \GG_\rho\bigr)\left(1+2\alpha\sup\GG_\rho\right)\right]\, e^{2\alpha\Lip \GG_\rho}\,\varepsilon
 \end{equation}
is valid for $t\in[\widetilde{T}-\alpha,\widetilde{T}+\alpha]$ and $\varepsilon\in[0,\varepsilon_0']$.
Possibly choosing a smaller $\varepsilon_0'$, this implies that
$x_\varepsilon([\widetilde{T}-\alpha,\widetilde{T}+\alpha])\subset\Set{K}$ for $\varepsilon<\varepsilon_0'$; 
since $\GG$ coincides with $\GG_\rho$ in $\Set{K}$, the conclusion holds for $\III_\varepsilon$ and $\GG$
as well as for $\III_\varepsilon^\rho$ and $\GG_\rho$ if $\varepsilon$ is no larger than $\varepsilon_0'$.
We have shown that,  for all $\varepsilon<\varepsilon_0'$, the solution  $\III_\varepsilon$ is defined on $[0,\widetilde{T}+\alpha]$
and satisfies $d(\III_\varepsilon(t)-\III_0(t))\leq c'\varepsilon$ for $t$ in $[0,\widetilde{T}+\alpha]$ where
$c'$ is larger than $c$ and than a bound deduced from (\ref{eq:gron2}) and from Lipschitz equivalence of $d$ with the Euclidean distance.
This contradicts the definition of $\widetilde{T}$. 
\hfill\endproof

{\em Proof of Theorem \ref{th-conv-sysM}, point \ref{it-thconv-b}}. 
Since $\GG$ is bounded on $S^1\times\Set{K}$ (one may cover $\Set{K}$ with a finite number of coordinate charts and
define this bound in coordinates), the maps $x_n(.)$ have a common Lipschitz constant and
the sequence $(x_n(.))$ is equi-continuous, hence compact by Ascoli-Arzela Theorem: one may extract a uniformly convergent sub-sequence.
Still denoting by $(x_n(.))_{n\in\NN}$ such a converging sub-sequence and by $\III^*(.)$ its (uniform) limit,
we need to prove that this limit is a solution of \eqref{eq-sysM}.

Define, for each $n$, $\widehat{u}_n:[0,T]\times S^1\to\RR^m$ by
\begin{equation}
  \label{eq:036}
  \widehat{u}_n(t,\theta)=u_n(\,\beta_{n}(t,\theta)\,)\,,
\end{equation}
where $u_n(.)\in L^\infty([0,T],\RR^m)$ is associated to $\III_n(.)$ according to the assumption of the theorem and
where the map $\beta_n:[0,T]\times S^1\to\RR$ is defined by
\begin{equation}
  \label{eq:035}
  t-2\pi\varepsilon_n<\beta_n(t,\theta)\leq t
\;,\ \ \ 
\frac{\beta_n(t,\theta)}{\varepsilon_n}\equiv\theta\mod 2\pi\ .
\end{equation}
Clearly $\widehat{u}_n$ is in $L^\infty([0,T]\times S^1,\RR^m)$ and $\|\widehat{u}_n\|_\infty\leq1$.
Hence, after possibly extracting a sub-sequence, $(\widehat{u}_n)$ converges in the weak-$*$ topology to some $\widehat{u}^*$.
Let us prove that, for almost all $t\in[0,T]$,
\begin{equation}
  \label{eq:034}
  \dot{x}^*(t)=\frac1{2\pi}\int_0^{2\pi}\GG(\theta,x^*(t))\,\widehat{u}^*(t,\theta)\mathrm{d}\theta\ .
\end{equation}
Let $\widetilde{T}\in[0,T]$ be the supremum of the set of numbers $\tau\in[0,T]$ such that this is true for
almost all $t$ in $[0,\tau]$, and let us prove by contradiction that $\widetilde{T}=T$. 

Assume $\widetilde{T}<T$, and let $\mathcal{O}$ be a coordinate neighborhood of $x_0(\widetilde{T})$ and
$\alpha$ be such that $0\!<\!\widetilde{T}-\alpha\!<\!\widetilde{T}+\alpha\!\leq\! T$ and $x_0([\widetilde{T}-\alpha,\widetilde{T}+\alpha])\!\subset\!\mathcal{O}$.
Uniform convergence implies $x_n ([\widetilde{T}-\alpha,\widetilde{T}+\alpha])\!\subset\!\mathcal{O}$ for $n$
large enough and then, in coordinates, for $t\in[\widetilde{T}-\alpha,\widetilde{T}+\alpha]$,
\begin{eqnarray}
\label{eq:037}
 \III_n(t)-\III_n(\widetilde{T}-\alpha) & = & \int_{\widetilde{T}-\alpha}^{t}\GG(\frac{s}{\varepsilon_n},\III_n(s))\, u_n(s)\, \mathrm{d}s \,.
\end{eqnarray}
From (\ref{eq:035}), one has $\beta_n(s+{\varepsilon_n}\theta,\frac s{\varepsilon_n})=s$, hence, from
(\ref{eq:036}), $\widehat{u}_n(s+{\varepsilon_n}\theta,\frac s{\varepsilon_n})=u_n(s)$ for all $\theta\in S^1$, $ s
\in\RR$; using this in Lemma~\ref{lem-chvar}, one has
\begin{eqnarray*}
  {\textstyle \frac1{2\pi}}\iint_{\substack{\\[1.1ex]\widetilde{T}-\alpha\leq s\leq t\\\theta\in S^1}}
\hspace{-1.5em}
\GG(\theta,\III_n(s))\widehat{u}_n(s,\theta)\, \mathrm{d}\theta\, \mathrm{d}s
&=&
 {\textstyle \frac1{2\pi}}\iint_{\substack{\\[1.1ex]\widetilde{T}-\alpha\leq s\leq t\\\theta\in S^1}}
\hspace{-.8em}
\GG(\frac{s}{\varepsilon_n},\III_n(s))\,u_n(s)\, \mathrm{d}\theta\, \mathrm{d}s
\;+\Delta_{\varepsilon_n}\;.
% \\ &=&
% \int_{\widetilde{T}-\alpha}^t\GG(\frac s{\varepsilon_n},\III_n(s))\,u_n(s)\, \mathrm{d}s
\end{eqnarray*}
Since the integral in the right-hand side ---whose integrand does not depend on $\theta$--- is equal to the right-hand
side of  (\ref{eq:037}), one gets, using uniform convergence of $\III_n$ to
$\III^*$, weak convergence of $\widehat{u}_n$ to $\widehat{u}^*$ and convergence of $\Delta_{\varepsilon_n}$ to zero, % one gets
% Substituting this in the right-hand side of (\ref{eq:037}), and using uniform convergence of $\III_n$ to
% $\III^*$ and weak convergence of $\widehat{u}_n$ to $\widehat{u}^*$, one gets
\begin{eqnarray*}
%\label{eq:037}
 \III^*(t)-\III^*(\widetilde{T}-\alpha) & = & \frac1{2\pi}\int_{\widetilde{T}-\alpha}^t \int_0^{2\pi}
\GG(\theta,\III^*(s))\widehat{u}^*(s,\theta)\, \mathrm{d}\theta\, \mathrm{d}s\ ,
\end{eqnarray*}
for $t$ in $[\widetilde{T}-\alpha,\widetilde{T}+\alpha]$, and finally that (\ref{eq:034}) hold for almost all
$t$ in $[0,\widetilde{T}+\alpha]$, thus contradicting the definition of $\widetilde{T}$. 
\hfill\endproof

% \texttt{TRASH:}
% \begin{eqnarray*}
%  \int_0^t\GG(\frac\tau\varepsilon_n,\III(\tau))\, u_\varepsilon(\tau)\, \mathrm{d}\tau&=&
% \frac1{2\pi}\int_0^t \!\! \int_0^{2\pi}\GG(\theta,x(s))u_\varepsilon(\beta_n(\theta,s))\, \mathrm{d}\theta\, \mathrm{d}s
% \\ && +\frac1{2\pi}\iint_{\substack{0\leq\phi\leq2\pi\\0\leq \tau\leq t}}\left[\GG(\frac\tau\varepsilon_n,\III(\tau))-\GG(\frac\tau\varepsilon_n,\III(\tau-\varepsilon_n\phi))\right]\, u_\varepsilon(\tau)\, \mathrm{d}\tau\, \mathrm{d}\phi
% \\ &&+\frac1{2\pi}\iint_{\substack{0\leq\phi\leq2\pi\\0\leq \tau\leq \varepsilon_n\phi}}\GG(\frac\tau\varepsilon_n,\III(\tau-\varepsilon\phi))\, u_\varepsilon(\tau)\, \mathrm{d}\tau\, \mathrm{d}\phi
% \\ && 
% -\frac1{2\pi}\iint_{\substack{0\leq\phi\leq2\pi\\t\leq \tau\leq t+\varepsilon\phi}}\GG(\frac\tau\varepsilon,\III(\tau-\varepsilon\phi))\, u_\varepsilon(\tau)\, \mathrm{d}\tau\, \mathrm{d}\phi
% \end{eqnarray*}

%%%%%%%%%%%%%%%%%%%%%%%%%%%
% SUB SECTION 
% PROPERTY FINSLER
%%%%%%%%%%%%%%%%%%%%%%%%%%%
\subsection{Dimension of the velocity set $\EE(\III)$}
\label{sec-propav}
Recall that, for a convex subset $C$ of a linear space, containing the origin,
its linear hull is the smallest linear subspace that contains $C$,
the interior of $C$ \emph{in its linear hull} is always nonempty, and
$\dim C$ is the dimension of this linear hull. 

Viewing $\frac{\partial^j \GG}{\partial\theta^j}(\theta,\III)$ as a linear map $\RR^m\to\mathrm{T}_\III\vari$ (see
Remark~\ref{rmk-G-oper}), and denoting by $\Sigma$ a sum of linear subspaces of $T_\III\vari$, define the
integer $r(\theta,\III)$ by: 
    \begin{equation}
      \label{eq:def-r}
      r(\theta,\III)
      % =\rank\left\{\frac{\partial^j \GG}{\partial \theta^j}(\theta,\III),\, j\in\NN \right\}
      =\dim\Bigl(\sum_{j\in\NN}\Range\frac{\partial^j \GG}{\partial \theta^j}(\theta,\III)\Bigr)\,.
    \end{equation}
% The same notations are used in the proposition below. 
It is also the rank
    of the collection of vectors $\frac{\partial^j \GG_i}{\partial \theta^j}(\theta,\III)\in\mathrm{T}_\III\vari$, $1\leq i\leq m$, $j\geq0$.

In the following proposition, and it is the sole place where this property is used, ``system (\ref{eq-sysO}) is real
analytic with respect to $\theta$'' means that the vector fields $\GG_i$ are real analytic with respect to $\theta$ for fixed
$\III$ (while being smooth with respect to $(\theta,\III)$).

\begin{proposition}
  \label{lem-dim} 
  \begin{remunerate}
  \item\label{lemdim-it1} The linear hull of $\EE(\III)$ satisfies the following two properties for all $\III$ in $\vari$,
    where the inclusion (\ref{eq:dim3}) is an equality if (\ref{eq-sysO}) is real analytic with respect to $\theta$: 
    \begin{align}
      \label{eq:dim1}
      &\LinearHull\EE(\III)\ =\ \sum_{\theta\in S^1}\Range\GG(\theta,\III)\;,
      \\
      \label{eq:dim3}
      &\LinearHull\EE(\III)
      \ \supset\ 
      \sum_{j\in\NN}\Range\frac{\partial^j \GG}{\partial \theta^j}(\theta,\III)
      \hspace{2em}\text{for all $\theta\in S^1$}\;.
    \end{align}
  \item\label{lemdim-it2} If $r(\theta,\III)=n$ for at least one $\theta$ in $S^1$, then $\EE(\III)$ has a nonempty
    interior in $T_\III\vari$, i.e. $\dim\EE(\III)=n$. 
  \item\label{lemdim-it3} If the system (\ref{eq-sysO}) is real analytic with respect to $\theta$, then $r(\theta,\III)$
    does not depend on $\theta$ and $r(\theta,\III)=\dim\EE(\III)$. 
  \end{remunerate}
\end{proposition}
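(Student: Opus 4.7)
The plan is to prove part \ref{lemdim-it1} first, then deduce \ref{lemdim-it2} and \ref{lemdim-it3} directly. For equation (\ref{eq:dim1}), let $W=\sum_{\theta\in S^1}\Range\GG(\theta,\III)$, a finite-dimensional (hence closed) subspace of $T_\III\vari$. The inclusion $\LinearHull\EE(\III)\subset W$ is immediate: for any admissible $\mathscr{U}$, the integrand $\GG(\theta,\III)\mathscr{U}(\theta)$ lies in $W$ pointwise, so $\overline{\GG}(\III,\mathscr{U})\in W$. For the reverse inclusion, given $\theta_0\in S^1$ and $u\in\RR^m$, I would test with the admissible controls $\mathscr{U}_\delta(\theta)=u$ on $[\theta_0-\delta,\theta_0+\delta]$ (extended by $0$): the vectors $\frac{\pi}{\delta}\overline{\GG}(\III,\mathscr{U}_\delta)$ belong to $\LinearHull\EE(\III)$ and converge, by continuity of $\GG$, to $\GG(\theta_0,\III)u$, which therefore lies in the closed subspace $\LinearHull\EE(\III)$.

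For the inclusion (\ref{eq:dim3}), the key observation is that the smooth curve $\theta\mapsto\GG(\theta,\III)u$ takes values in the finite-dimensional subspace $\LinearHull\EE(\III)$, so all of its derivatives at any $\theta_0$ stay in that subspace, giving $\frac{\partial^j\GG}{\partial\theta^j}(\theta_0,\III)u\in\LinearHull\EE(\III)$. For the reverse inclusion in the real-analytic case, fix $\theta_0$ and set $W'=\sum_{j\in\NN}\Range\frac{\partial^j\GG}{\partial\theta^j}(\theta_0,\III)$. The Taylor series of $\GG(\cdot,\III)u$ at $\theta_0$ converges on some neighborhood, and each partial sum lies in the closed subspace $W'$, so $\Range\GG(\theta,\III)\subset W'$ for $\theta$ near $\theta_0$. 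A connectedness argument on $S^1$ --- the set $\{\theta\in S^1:\Range\GG(\theta,\III)\subset W'\}$ is open by applying the same Taylor argument at each of its points, and closed by continuity of $\GG$ together with closedness of $W'$ --- extends the inclusion to all of $S^1$; combined with (\ref{eq:dim1}), this yields $\LinearHull\EE(\III)\subset W'$ and equality in (\ref{eq:dim3}).

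Parts \ref{lemdim-it2} and \ref{lemdim-it3} then follow at once. For \ref{lemdim-it2}, if $r(\theta_0,\III)=n$ then (\ref{eq:dim3}) forces $\LinearHull\EE(\III)=T_\III\vari$, and since $\EE(\III)$ is convex (Proposition~\ref{lem-conv}), having a full-dimensional linear hull is equivalent to having nonempty interior in $T_\III\vari$. For \ref{lemdim-it3}, equality in (\ref{eq:dim3}) at every $\theta$ says that $r(\theta,\III)=\dim\LinearHull\EE(\III)=\dim\EE(\III)$, a quantity independent of $\theta$. The main obstacle I anticipate is the analytic extension step: one must propagate the Taylor-based inclusion $\Range\GG(\theta,\III)\subset W'$ from a neighborhood of $\theta_0$ to the entire circle without a global radius-of-convergence assumption. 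The open-closed argument on $S^1$, exploiting the finite-dimensionality (hence closedness) of $W'$, is the natural tool and requires no further hypothesis.
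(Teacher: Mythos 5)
Your argument is sound except for one step in the analytic-continuation part, and that step, as written, does not work. You set $W'=\sum_{j\in\NN}\Range\frac{\partial^j \GG}{\partial \theta^j}(\theta_0,\III)$ and run an open--closed argument on the set $A=\{\theta\in S^1:\Range\GG(\theta,\III)\subset W'\}$, claiming that $A$ is open ``by applying the same Taylor argument at each of its points.'' But the Taylor argument at $\theta_0$ worked only because, by the very definition of $W'$, \emph{all} derivatives $\frac{\partial^j \GG}{\partial \theta^j}(\theta_0,\III)u$ lie in $W'$; at a general point $\theta_1\in A$ you only know that the \emph{value} $\GG(\theta_1,\III)u$ lies in $W'$, which gives no control on the Taylor coefficients at $\theta_1$. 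A set of this kind need not be open: for a curve of the form $\theta\mapsto\sin(\theta-\theta_1)\,w$ with $w\notin W'$, the value at $\theta_1$ lies in $W'$ (it is zero) but at no nearby $\theta$. So openness of $A$ cannot be asserted at that stage of the proof (it is only true a posteriori, once the proposition is established), and the continuation step is a genuine gap.

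The fix is small. Either run the open--closed argument on the larger set $B=\bigl\{\theta\in S^1:\ \frac{\partial^j \GG}{\partial \theta^j}(\theta,\III)u\in W'\ \text{for all}\ j\in\NN,\ u\in\RR^m\bigr\}$: it is closed by continuity of each derivative and closedness of the finite-dimensional $W'$, it contains $\theta_0$ by definition of $W'$, and it is open, because at any $\theta_1\in B$ your Taylor argument shows that $\GG(\cdot,\III)u$ is $W'$-valued on a whole neighborhood of $\theta_1$, and on that open set all its $\theta$-derivatives are again $W'$-valued (difference quotients stay in the closed subspace), so the neighborhood is contained in $B$. Or dualize, as the paper does: for $p\in(W')^\perp$, the scalar real-analytic function $\phi\mapsto\langle p,\GG(\phi,\III)\rangle$ vanishes at $\phi=\theta_0$ together with all its derivatives, hence vanishes identically on the connected circle, which gives $\Range\GG(\phi,\III)\subset W'$ for every $\phi$ with no connectedness bookkeeping at all. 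Everything else in your proposal is correct and amounts to a ``primal'' (ranges and subspaces) rendition of the paper's ``dual'' (annihilators) proof; in particular your proofs of (\ref{eq:dim1}), of the inclusion (\ref{eq:dim3}), and your derivation of Points 2 and 3 from Point 1 are fine, the latter being slightly more explicit than the paper, which merely calls them an easy consequence and a classical fact.
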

\begin{proof}
  If $p$ is in ${\Range\GG(\theta,\III)}^{\,\perp}$ for all $\theta$,
  then any $v=\overline{\GG}(\III,\mathscr{U})$ in $\EE(\III)$ satisfies $\langle p,v\rangle=0$ because
  $\langle p,\GG(\theta,\III)\mathscr{U}(\theta)\rangle$ is identically zero on $[0,2\pi]$.
  Conversely, let $p$ be in $\EE(\III)^{\perp}$, and consider $v=\overline{\mathcal{G}}(\III,\mathscr{U}^*_{p,\III})\in\EE(\III)$;
  then $\langle p,v\rangle=0$ implies $\langle p,\GG(\theta,\III)\rangle=0$, 
  i.e. $p\in{\Range\GG(\theta,\III)}^{\,\perp}$ for all $\theta$.
We have proved the identity 
$\EE(\III)^{\perp}=\bigcap_{\theta\in S^1}\left(\Range\GG(\theta,\III)\right)^\perp$, hence (\ref{eq:dim1}).
\\
If $p$ is in ${\Range\GG(\phi,\III)}^{\,\perp}$ for all $\phi$, differentiating $\left\langle p\,,\GG(\phi,\III)\right\rangle\!=\!0$
  with respect to $\phi$ yields $\left\langle
    \,p\,,\partial^j\GG/\partial\phi^j(\III,\phi)\right\rangle=0$, $j\in\NN$; we have proved,
  taking $\phi\!=\!\theta$, the inclusion
$\bigcap_{\phi\in S^1} \left(\Range\GG(\phi,\III)\right)^\perp \subset
    \bigcap_{j\in\NN}\bigl(\Range\frac{\partial^j\GG}{\partial \theta^j}(\theta,\III)\bigr)^\perp$, hence \eqref{eq:dim3}.
To prove the reverse inclusion in the real analytic case, fix $\theta\in S^1$ and 
  $p\in\bigcap_{j\in\NN} { \frac{\partial^j \GG}{\partial \theta^j}(\theta,\III) } ^\perp$, 
  and consider the real analytic mapping $S^1\to(\RR^m)^*$, $\phi\mapsto \left\langle p\,,\GG(\phi,\III)\right\rangle $; 
  the assumption on $p$ implies that this map vanishes for $\phi=\theta$, as well as its derivatives at all orders,
  hence it is identically zero: $p\in\bigcap_{\phi\in S^1}\left(\Range\GG(\phi,\III)\right)^\perp$.
This ends the proof of Point \ref{lemdim-it1}.
Point \ref{lemdim-it2} is an easy consequence and Point \ref{lemdim-it3} is classical.
\hfill\end{proof}

%%%%%%%%%%%%%%%%%%%%%%%%%%
% SUBSECTION FULL RANK CASE
%%%%%%%%%%%%%%%%%%%%%%%%%%
\subsection{Further properties in the full rank case}
\label{sec-fullrank}

We now assume that the mapping $\mathcal{G}$ in (\ref{eq-sysO}) is such that the rank $r(\theta,\III)$ defined by (\ref{eq:def-r}) is maximal:
\vspace{-1ex}
\begin{equation}
  \label{eq:fullrank}
  r(\theta,\III)= n\ \ \text{for all $\III$ in  $\vari$ and $\theta$ in $S^1$.}
\end{equation}

\subsubsection{Controllability}

Condition \eqref{eq:fullrank} is strongly related to controllability of the linear approximation of %the systems  
(\ref{eq-sysO}) around equilibria, i.e. around solutions where $\III$ is constant and $u$ is identically
zero. Indeed, the linear approximation of the time-varying nonlinear system (take $\varepsilon=1$ in (\ref{eq-sysO})):
\vspace{-1ex}
\begin{equation}
  \label{eq:sys01}
  \dot{\III}=\mathcal{G}(t,\III)u
\vspace{-1ex}
\end{equation}
 around the equilibrium $\III=\III_1$ is the time-varying linear system $\dot{\xi}=\mathcal{G}(t,\III_1)u$; according to
\cite[p.614]{Kail80}, it is ``controllable with impulsive controls at any time'' if and only if $r(t,\III_1)=n$ for all
$t$. If this is true at all points $\III_1$ then all end-point mappings are submersions around zero controls; we shall
need the following more precise result: 

\smallskip

\begin{proposition}
  \label{prop-controllability}
  Assume that (\ref{eq:fullrank}) holds.
  \begin{remunerate}
  \item\label{item-cont1} 
For all $\III_1\in\vari$ and $T\!>\!0$, there exist
a coordinate neighborhood $\mathcal{W}$ of $\III_1$ (the ball $\mathcal{B}$ below refers to the Euclidean norm in these coordinates),
positive constants $\alpha_0,c_3$, 
and, for all $y\in\mathcal{W}$, a smooth
    map $\chi_y:\mathcal{B}(y,\alpha_0)\to  L^\infty([0,T],\RR^m)$ with Lipschitz constant $c_3$,
    which is a right inverse of the end-point mapping of (\ref{eq:sys01}) on $[0,T]$ starting from $y$, 
    i.e. for all $y_f\in \mathcal{B}(y,\alpha_0)$, the control $\chi_y(y_f):[0,T]\to\RR^m$ is such that the solution of
      $\dot{\III}=\mathcal{G}(t,\III)\chi_y(y_f)(t),\;\III(0)=y$ satisfies $\III(T)=y_f$.
  \item\label{item-cont2} For all $\varepsilon>0$, the system (\ref{eq-sysO}) is fully controllable, i.e. there exists, for any $\varepsilon>0$ and any two point $\III_0,\III_1$ in $\vari$, a
    time $T$ and a measurable control $u:[0,T]\to B^m$ such that the solution of  (\ref{eq-sysO}) with $\III(0)=\III_0$ satisfies $\III(T)=\III_1$.
  \end{remunerate}
\end{proposition}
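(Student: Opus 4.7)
The plan is to prove Part \ref{item-cont1} via the controllability Gramian of the linear approximation at equilibria, and then to derive Part \ref{item-cont2} from it by a connectedness argument.

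For Part \ref{item-cont1}, I would first work in coordinates around $x_1$ and introduce the Gramian
\[
W(T,y)=\int_0^T\mathcal{G}(t,y)\mathcal{G}(t,y)^\top\,\mathrm{d}t \in\RR^{n\times n}.
\]
The full rank hypothesis (\ref{eq:fullrank}) forces $W(T,x_1)$ to be positive definite: if $pW(T,x_1)p^\top=0$ for some $p\in(\RR^n)^*$, then the smooth map $t\mapsto\langle p,\mathcal{G}(t,x_1)\rangle$ vanishes identically on $[0,T]$, so all its derivatives at any interior $\theta_0$ vanish, contradicting $r(\theta_0,x_1)=n$. By continuity, $W(T,y)$ remains invertible with $\|W(T,y)^{-1}\|$ uniformly bounded on some coordinate neighborhood $\mathcal{W}$ of $x_1$. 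I would then reduce to a finite-dimensional problem by considering controls of the form $u_{y,v}(t)=\mathcal{G}(t,y)^\top W(T,y)^{-1}v$ parametrized by $v\in\RR^n$, and set $F(y,v)=x(T;y,u_{y,v})$, where $x(\cdot;y,u)$ is the solution of $\dot{x}=\mathcal{G}(t,x)u$ starting from $y$. A short variation-of-constants calculation gives $F(y,0)=y$ and $(\partial F/\partial v)(y,0)=I_n$. The parameter-dependent implicit function theorem then yields, after possibly shrinking $\mathcal{W}$, a uniform $\alpha_0>0$ and a smooth map $\psi:\{(y,y_f):y\in\mathcal{W},\,y_f\in\mathcal{B}(y,\alpha_0)\}\to\RR^n$ satisfying $\psi(y,y)=0$ and $F(y,\psi(y,y_f))=y_f$. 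Setting $\chi_y(y_f)(t):=\mathcal{G}(t,y)^\top W(T,y)^{-1}\psi(y,y_f)$ then provides the desired right inverse; its Lipschitz constant in $y_f$ is bounded by $c_3:=\sup_{y\in\mathcal{W},\,t\in[0,T]}\|\mathcal{G}(t,y)^\top W(T,y)^{-1}\|\cdot\sup\|\partial\psi/\partial y_f\|$, which is uniform in $y\in\mathcal{W}$.

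For Part \ref{item-cont2}, fix $\varepsilon>0$; since $\partial^j/\partial t^j[\mathcal{G}(t/\varepsilon,x)]=\varepsilon^{-j}\partial^j\mathcal{G}/\partial\theta^j(t/\varepsilon,x)$, the rank condition (\ref{eq:fullrank}) is preserved under $t\mapsto t/\varepsilon$, and Part \ref{item-cont1} applies to (\ref{eq-sysO}). Let $R(x_0)\subset\vari$ denote the set of states reachable from $x_0$ in positive time using an admissible control $u:[0,T]\to B^m$. For any $z\in R(x_0)$, apply Part \ref{item-cont1} at $z$ to obtain data $\alpha_0,c_3$, then further shrink $\alpha_0$ so that $c_3\alpha_0\leq 1$. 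The inequality $\|\chi_z(y_f)\|_\infty\leq c_3\|y_f-z\|\leq 1$ (using $\chi_z(z)=0$ and the Lipschitz property) shows that every $y_f\in\mathcal{B}(z,\alpha_0)$ is reachable from $z$ by an admissible control; concatenation then gives $\mathcal{B}(z,\alpha_0)\subset R(x_0)$, so $R(x_0)$ is open. The same local statement applied at an arbitrary $y\notin R(x_0)$ shows that $y$ is reachable by an admissible control from every point in some ball $\mathcal{B}(y,\alpha_0')$; if any such point belonged to $R(x_0)$, concatenation would yield $y\in R(x_0)$, a contradiction. The complement of $R(x_0)$ is therefore open as well, and connectedness of $\vari$ together with $x_0\in R(x_0)$ forces $R(x_0)=\vari$.

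The main technical point is in Part \ref{item-cont1}: obtaining $\alpha_0$ and $c_3$ uniformly in $y\in\mathcal{W}$ rather than just at $y=x_1$. This amounts to invoking the parameter-dependent implicit function theorem while tracking that all constants involved---the norm of $W(T,y)^{-1}$, the Lipschitz dependence of the nonlinear flow $x(T;y,u)$ on its two arguments, and the derivative of $\psi$---depend continuously on $y$ and are therefore locally uniformly bounded; once these uniform bounds are in place, the rest is routine.
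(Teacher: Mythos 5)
Your proof follows essentially the same route as the paper's. For point~\ref{item-cont1} the paper also reduces to a finite-dimensional family of controls and inverts the end-point map: it notes that (\ref{eq:fullrank}) makes the derivative of $E_{\III_1}$ at the zero control surjective, picks an $n$-dimensional subspace $V\subset L^\infty([0,T],\RR^m)$ on which $E_y$ restricts to a local diffeomorphism at $0$ for $y$ near $\III_1$, and takes the $\chi_y$'s to be the local inverses, smooth in $y$, whence the common $\alpha_0,c_3$. Your Gramian controls $u_{y,v}(t)=\mathcal{G}(t,y)^\top W(T,y)^{-1}v$ are just an explicit choice of such a subspace, and your positive-definiteness argument (if $pW(T,\III_1)p^\top=0$ then $\langle p,\mathcal{G}(t,\III_1)\rangle$ vanishes on $[0,T]$, so all its $\theta$-derivatives vanish at an interior point and $p=0$ by (\ref{eq:fullrank})) is exactly the justification the paper leaves implicit; this is extra detail, not a different method. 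Point~\ref{item-cont2} is likewise the paper's open-and-closed connectedness argument; your ``complement is open'' is the paper's ``closed'', phrased with forward rather than negative-time reachability.

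One step needs repair: the concatenations in point~\ref{item-cont2}. System (\ref{eq-sysO}) is time-varying, and point~\ref{item-cont1} (in your version as in the paper's) produces a control steering $z$ to $y_f$ on an interval \emph{starting at time $0$}, whereas in your argument $z$ is reached from $x_0$ at some time $T_1$ you do not control; a control valid on $[0,T]$ cannot simply be replayed on $[T_1,T_1+T]$. The fix is the one the paper makes explicit in Lemma~\ref{lem-TT}: since the system is driftless, hold the state at $z$ with $u=0$ until the next multiple of the period $2\pi\varepsilon$, after which, by periodicity of $\mathcal{G}$, the time-shifted control steers $z$ to $y_f$ with the same norm bound $c_3\|y_f-z\|\leq1$. (Redoing your Gramian construction on $[T_1,T_1+T]$ also works for the openness of the reachable set, since the constants may then depend on the fixed $T_1$; but for the ``complement is open'' half you need a single ball around $y$ valid for every possible arrival time of a nearby reachable point, so the waiting-plus-periodicity argument, or uniformity in the starting time, is genuinely required.) With that one-line addition your proof is complete.
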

\begin{proof}
Let $E_y:L^\infty([0,T],\RR^m)\to\vari$ be the end-point mapping with starting point $y$.
Condition \eqref{eq:fullrank} implies that the derivative of $E_{\III_1}$ at the zero control has rank $n$; hence there
exists an $n$-dimensional subspace $V$ of $L^\infty([0,T],\RR^m)$
such that the restriction of $E_{\III_1}$, and hence of $E_y$ for $y$ close enough, to $V$ is a local diffeomorphism at zero; the
$\chi_y$'s are the local inverses of these local diffeomorphisms; they depend smoothly on $y$, hence the common $\alpha_0$ and $c_3$ in Point 1.

This implies that the reachable set from any point at any positive or negative time contains a neighborhood of this
point; a classical argument then tells us that the reachable set from a
point $\III_0$ is $\vari$, assumed to be connected, for it is both open (obvious) and closed (if $\bar{\III}$ is
in the closure of the reachable set, some points in the reachable set can be reached in negative time, hence $\bar{\III}$
can be reached from $\III_0$).
\hfill\end{proof}

\medskip

Let us now turn to the average system  (\ref{eq-sysM}). From $H:\mathrm{T}^*M\to[0,+\infty)$ defined
by~\eqref{eq:Hamiltonian}, we define $N:\mathrm{T}\vari\to[0,+\infty]$ by
\vspace{-1ex}
\begin{equation}
  \label{eq:5}
  N(\III,v)=\max_{p\in \mathrm{T}_\III^*M,\;H(\III,p)\leq1} \langle p,v\rangle\ .
\end{equation}
\begin{proposition}
  \label{prop-Finsler}
  Assume the rank condition (\ref{eq:fullrank}).
  \begin{remunerate}
  \item\label{it-PropFinsler-1} For all $\III\in\vari$, $H(\III,.)$ %$p\mapsto H(\III,p)$ 
         defines a norm on the cotangent space $\mathrm{T}_\III^*M$,
    its dual norm on the tangent space $\mathrm{T}_\III M$ is $N(\III,.)$, and %$v\mapsto N(\III,v)$
    $\EE(\III)$ is the unit ball for $N(\III,.)$, i.e. \\$\EE(\III)=\{v\in \mathrm{T}_\III M,\;N(\III,v)\leq1\}$.
  \item\label{it-PropFinsler-3} System (\ref{eq-sysM}) is fully controllable, i.e. there exists, for any points $\III_0,\III_1$ in $\vari$, a
    time $T$ and a solution $\III(.):[0,T]\to\vari$ of  (\ref{eq-sysM}) such that $\III(0)=\III_0$, $\III(T)=\III_1$.
  \end{remunerate}
\end{proposition}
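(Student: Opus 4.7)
For item \ref{it-PropFinsler-1}, I first verify that $H(x,\cdot)$ is a norm on $\mathrm{T}_x^*\vari$. Subadditivity and absolute homogeneity follow from the corresponding properties of the Euclidean norm inside the integrand of \eqref{eq:Hamiltonian}. For positive-definiteness, the map $\theta\mapsto\|\langle p,\GG(\theta,x)\rangle\|$ is continuous and non-negative, so $H(x,p)=0$ forces $\langle p,\GG(\theta,x)\rangle=0$ for every $\theta\in S^1$, i.e.\ $p\in\bigcap_{\theta\in S^1}\bigl(\Range\GG(\theta,x)\bigr)^\perp$. Under the full rank hypothesis \eqref{eq:fullrank}, Proposition~\ref{lem-dim}.\ref{lemdim-it2} yields $\dim\EE(x)=n$, so \eqref{eq:dim1} gives $\sum_{\theta\in S^1}\Range\GG(\theta,x)=\mathrm{T}_x\vari$ and therefore $p=0$. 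This is the only step that really uses the full rank assumption.

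To finish item \ref{it-PropFinsler-1}, I identify $\EE(x)$ with the unit ball of $N(x,\cdot)$ and $N$ with the dual norm of $H$. The middle identity in \eqref{eq:EHbis} says that $H(x,\cdot)$ is the support function of $\EE(x)$, so the definition \eqref{eq:5} of $N$ coincides with the standard dual-norm formula; hence $N(x,\cdot)$ is a norm on $\mathrm{T}_x\vari$. The identity \eqref{eq:EH} then reads $\EE(x)=\{v\in\mathrm{T}_x\vari:N(x,v)\leq 1\}$, which is the closed unit ball of $N(x,\cdot)$.

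For item \ref{it-PropFinsler-3}, my strategy is to combine short-time local reachability with a connectedness argument. By item \ref{it-PropFinsler-1}, $\EE(x)$ is the closed unit ball of a norm that varies continuously with $x$, so on any compact coordinate chart it contains a Euclidean ball of some uniform radius $\rho>0$. Given $x_0\in\vari$ and a target $x_1$ in a coordinate neighborhood of $x_0$, pick a smooth curve $\gamma:[0,T]\to\vari$ joining them; after rescaling time (replacing $\gamma$ by $t\mapsto\gamma(t/\lambda)$ with $\lambda$ large enough) so that $\|\dot\gamma(t)\|\leq\rho$ pointwise, one has $\dot\gamma(t)\in\EE(\gamma(t))$ for all $t$, so $\gamma$ is a solution of \eqref{eq-sysM}. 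Hence every point of $\vari$ is interior to the reachable set from itself, and consequently the reachable set $R(x_0)$ from any $x_0$ is open.

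To close the argument, I exploit the central symmetry $\EE(x)=-\EE(x)$ from Proposition~\ref{lem-conv}: if $x(\cdot)$ solves \eqref{eq-sysM} on $[0,T]$, then the reversed curve $t\mapsto x(T-t)$ has derivative $-\dot x(T-t)\in-\EE(x(T-t))=\EE(x(T-t))$, so it is also a solution. Reachability is therefore a symmetric relation; combined with the local reachability step, this makes the complement $\vari\setminus R(x_0)$ open as well, and connectedness of $\vari$ forces $R(x_0)=\vari$. The main nontrivial ingredient is the non-degeneracy step of item \ref{it-PropFinsler-1}; the controllability argument, though longer to write out, is a routine open--closed manipulation built on symmetry of $\EE$ and the fact that $0$ lies in its interior.
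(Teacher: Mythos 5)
Your proof is correct, and item \ref{it-PropFinsler-1} follows the paper's line almost verbatim: the paper obtains positive-definiteness by differentiating $\langle p,\GG(\theta,\III)\rangle=0$ with respect to $\theta$ and invoking \eqref{eq:fullrank} directly, whereas you route it through Proposition~\ref{lem-dim} (items \ref{lemdim-it1}--\ref{lemdim-it2}) and \eqref{eq:dim1}; this is the same underlying fact, and the rest (dual norm, unit ball via \eqref{eq:EH}--\eqref{eq:EHbis}) is identical. The genuine divergence is item \ref{it-PropFinsler-3}. The paper argues directly: take any $C^1$ curve $\gamma$ from $\III_0$ to $\III_1$ and reparametrize it by a $\sigma$ satisfying $t\geq\int_0^{\sigma(t)}N(\gamma(s),\frac{\mathrm{d}\gamma}{\mathrm{d}s}(s))\,\mathrm{d}s$, so that the velocity of $\gamma\circ\sigma$ stays in the unit ball $\EE$; this is shorter and, as a bonus, ties the transfer time to the $N$-length (Finsler length) of the chosen curve. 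You instead prove local reachability from each point, using continuity and positivity of $H$ on compacts to get a uniform Euclidean ball of radius $\rho$ inside $\EE$, and then run an open--closed connectedness argument in which the symmetry $\EE(\III)=-\EE(\III)$ from Proposition~\ref{lem-conv} makes reachability symmetric and hence the complement of the reachable set open; this mirrors the argument the paper itself uses for Proposition~\ref{prop-controllability}, point \ref{item-cont2}, and it implicitly relies on concatenation of solutions of the inclusion, which is legitimate. Both routes rest on the same ingredients (continuity of $H$, hence of $N$, plus the nondegeneracy coming from \eqref{eq:fullrank}); the paper's construction is more economical and more quantitative, while yours is a robust topological argument that does not require exhibiting an explicit reparametrization.
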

\begin{proof}
  From (\ref{eq:Hamiltonian}), $H(\III,p)=0$ implies $\langle p,\GG(\theta,\III)\rangle=0$  for all $\theta$ and,
differentiating with respect to $\theta$ and using (\ref{eq:fullrank}), this implies $p=0$; this makes $p\mapsto H(\III,p)$
a norm, the other properties being straightforward. Hence $N$ given by \eqref{eq:5} is finite for any $(\III,v)$ and it is, by
definition, the dual norm of $H(\III,.)$; $\EE(\III)$ is its unit ball by (\ref{eq:EH}) in Proposition~\ref{lem-E-H}.
To prove Point~\ref{it-PropFinsler-3}, take a continuously differentiable curve $\gamma:[0,1]\to\vari$ such that $\gamma(0)=\III_0$ and
$\gamma(1)=\III_1$ and $\sigma:[0,T]\to[0,1]$ for some $T>0$, differentiable, such that
$$t\geq\int_0^{\sigma(t)}N(\gamma(s),\frac{\mathrm{d}\gamma}{\mathrm{d}s}(s)) \mathrm{d}s$$
($N$ and $H$ are obviously continuous),
then $t\mapsto \III(t)=\gamma(\sigma(t))$ is a solution of (\ref{eq-sysM}) such that $\III(0)=\III_0$ and $\III(T)=\III_1$.
\hfill\end{proof}

\subsubsection{On the differentiability of $H$}
\label{sec-H-diff}
It is clear that $H$, given by \eqref{eq:Hamiltonian}, is as smooth as $\GG$
on $\mathrm{T}^*\vari\setminus \widetilde{\mathcal{Z}}$ with 
\begin{eqnarray}
  \label{eq-Ztilde}
  \widetilde{\mathcal{Z}}&=&\{(\III,p)\in \mathrm{T}^*\vari,\,\exists\theta
  \in S^1,\,\langle p , \GG(\theta,\III) \rangle=0\}\ .
\end{eqnarray}
Unfortunately,
$\widetilde{\mathcal{Z}}$ is not empty in general: it is generically a $2n-m+1$ dimensional
submanifold of $\mathrm{T}^*\vari$.
One however has the following result, valid also at these points.

\smallskip

\begin{theorem}
  \label{prop-HnormC1}
  If condition (\ref{eq:fullrank}) holds, $H^2$ is continuously differentiable.
\end{theorem}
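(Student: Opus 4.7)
My plan is to show that $H$ itself is $C^{1}$ on the open set $\{p\neq 0\}$, so that $H^{2}=H\cdot H$ is $C^{1}$ there, and then to treat the zero section $\{p=0\}$ separately using the elementary bound $H(x,p)\leq(\sup_{\theta}\|\GG(\theta,x)\|)\,\|p\|$. Writing $h(x,p,\theta):=\|\langle p,\GG(\theta,x)\rangle\|^{2}$, so that $H=\frac{1}{2\pi}\int_{0}^{2\pi}\sqrt{h}\,d\theta$, the crucial estimate is that on any compact $\mathcal{K}\subset\mathrm{T}^{*}\vari$,
\[
\|\nabla_{(x,p)}h(x,p,\theta)\|\ \leq\ C(\mathcal{K})\sqrt{h(x,p,\theta)},\qquad (x,p)\in\mathcal{K},\;\theta\in S^{1}.
\]
This follows immediately from the factorizations $\nabla_{p}h=2\,\GG(\theta,x)\langle p,\GG(\theta,x)\rangle^{\!\top}$ and $\nabla_{x_{k}}h=2\,\langle p,\partial_{x_{k}}\!\GG(\theta,x)\rangle\langle p,\GG(\theta,x)\rangle^{\!\top}$ by Cauchy--Schwarz, and geometrically reflects that every zero of the smooth non-negative function $h$ is automatically a critical point. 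In particular $\sqrt{h}$ is locally Lipschitz in $(x,p)$ uniformly in $\theta$, and the a priori singular integrand $\nabla_{(x,p)}h/(2\sqrt{h})$ that will appear in $\nabla H$ is in fact uniformly bounded.

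Fix $(x_{0},p_{0})$ with $p_{0}\neq 0$. The full rank hypothesis~\eqref{eq:fullrank} forces $Z:=\{\theta\in S^{1}:\langle p_{0},\GG(\theta,x_{0})\rangle=0\}$ to be \emph{finite}: at any $\theta_{*}\in Z$, infinite order of vanishing of $\theta\mapsto\langle p_{0},\GG(\theta,x_{0})\rangle$ would mean $\langle p_{0},\partial_{\theta}^{\,j}\GG(\theta_{*},x_{0})\rangle=0$ for every $j$ and hence $p_{0}=0$. Two applications of dominated convergence then yield $H\in C^{1}$ on $\{p\neq 0\}$. First, along any line $t\mapsto(x_{0}+t\delta x,p_{0}+t\delta p)$, the difference quotients $(\sqrt{h_{t}}-\sqrt{h_{0}})/t$ are bounded uniformly in $\theta$ by the Lipschitz constant above and converge pointwise on $S^{1}\setminus Z$ to $\dot{h}_{0}/(2\sqrt{h_{0}})$, yielding a G\^ateaux derivative
\[
\nabla H(x_{0},p_{0})\ =\ \frac{1}{2\pi}\int_{0}^{2\pi}\frac{\nabla_{(x,p)}h(x_{0},p_{0},\theta)}{2\sqrt{h(x_{0},p_{0},\theta)}}\,d\theta
\]
that is linear in $(\delta x,\delta p)$. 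Second, as $(x,p)\to(x_{0},p_{0})$ this same integrand converges pointwise off $Z$ and stays uniformly bounded, so the integral is continuous in $(x,p)$; thus $\nabla H$ is continuous and $H$, hence $H^{2}$, is $C^{1}$ on $\{p\neq 0\}$.

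At $(x_{0},0)$, the bound $H\leq C_{x_{0}}\|p\|$ gives $H^{2}\leq C_{x_{0}}^{2}\|p\|^{2}$, so $H^{2}$ is differentiable at $(x_{0},0)$ with $\nabla H^{2}(x_{0},0)=0$; and the identity $\nabla H^{2}=2H\nabla H$ together with $H\leq C\|p\|$ and the uniform bound $|\nabla H|\leq\sup_{\theta}\|\GG\|+\|p\|\sup_{\theta}\|\partial_{x}\GG\|$ visible from the formula above shows $\nabla H^{2}(x,p)\to 0$ as $p\to 0$, proving continuity of $\nabla H^2$ there. The main obstacle is really the gradient estimate $\|\nabla h\|\leq C\sqrt{h}$: it is what rescues the integrand $\nabla h/(2\sqrt{h})$ from being singular at the zeros of $\langle p,\GG\rangle$ and makes both DCT arguments go through, while the full rank condition is used only to ensure that the bad set $Z$ has Lebesgue measure zero.
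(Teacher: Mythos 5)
Your proof is correct and takes essentially the same route as the paper's: the rank condition \eqref{eq:fullrank} makes the zero set $\{\theta:\langle p,\GG(\theta,x)\rangle=0\}$ finite, dominated convergence then yields $H\in C^1$ on $\{p\neq0\}$ with the same integral formula for $\mathrm{d}H$, and the zero section is handled exactly as in the paper via $H\leq k\|p\|$ together with local boundedness of $\mathrm{d}H$. The only (minor) difference is technical: the paper delegates the first step to Proposition~\ref{prop-C1-LipLog}, Point~\ref{point-C1}, proved by directly expanding $\|\mathsf{V}(\theta,X+h)\|-\|\mathsf{V}(\theta,X)\|$, whereas you obtain the same conclusion from the gradient estimate $\|\nabla h\|\leq C\sqrt{h}$ for $h=\|\langle p,\GG\rangle\|^2$ and a continuous-G\^ateaux-derivative argument.
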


\smallskip

It is stated for $H^2\!:(\III,p)\mapsto H(\III,p)^2$, because $H$ itself,
homogeneous of degree 1 with respect to $p$, cannot be differentiable on
$\{p=0\}$, that coincides with $\{H(\III,p)=0\}$ by Proposition~\ref{prop-Finsler} item~\ref{it-PropFinsler-1}.

The map $H$ fails in general to be twice differentiable on $\widetilde{\mathcal{Z}}$.
We have the following estimate of the of the modulus of continuity of its first derivative, that even fails to be
Lipschitz continuous.
Its main consequence is Theorem~\ref{th-ham-flow}.

\smallskip

\begin{theorem}%[Hamiltonian flow for the average minimum time problem]
\label{th-ham-LipLog}
Assume that the rank condition (\ref{eq:fullrank}) holds and that
\begin{romannum}
  \item\label{it-lip-1} for $(\III,p)\!\in\mathrm{T}^*\vari$, $p\neq0$, there is at most one $\theta\!\in S^1$ such
    that  $\langle p , \GG(\theta,\III) \rangle\!=\!0$, and $\displaystyle\langle p ,
    \frac{\partial\GG}{\partial\theta}(\theta,\III) \rangle$ does not vanish at the same point,
  \item\label{it-lip-2} for all $(\theta,\III)\in S^1\times\vari$, one has $\rank\GG(\theta, \III)=m$,
\end{romannum}
\vspace{.8ex}then any point $(\bar \III,\bar p)$  has a constant $c$ and a coordinate neighborhood in $\mathrm{T}^*\vari$ such that for all
$X$ and $Y$ in $\RR^{2n}$, coordinates of points in the neighborhood,
\begin{equation}
  \label{eq:Lip-Log}
  \|\mathrm{d}H(Y)-\mathrm{d}H(X)\|\leq c\,\|X-Y\|\, \ln\frac1{\|X-Y\|}\ .
\vspace{-1ex}
\end{equation}
\end{theorem}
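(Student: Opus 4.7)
The plan is to show that the only place $H$ fails to be smooth is the singular set $\widetilde{\mathcal{Z}}$, and then to control carefully how the derivative $\mathrm{d}H$ varies near it. Write $X=(\III,p)$ and $f(X,\theta)=\langle p,\GG(\theta,\III)\rangle\in(\RR^m)^*$. If $(\bar\III,\bar p)\notin\widetilde{\mathcal{Z}}$, the integrand of \eqref{eq:Hamiltonian} is smooth on a product neighborhood $\mathcal{U}\times S^1$ because $\|f\|$ is uniformly bounded below there; hence $H$ is $C^\infty$ on $\mathcal{U}$ and \eqref{eq:Lip-Log} is trivial. The substance of the theorem is the case $(\bar\III,\bar p)\in\widetilde{\mathcal{Z}}$: by hypothesis~\ref{it-lip-1} there is exactly one $\bar\theta_0\in S^1$ with $f(\bar X,\bar\theta_0)=0$, and $\partial_\theta f(\bar X,\bar\theta_0)\neq0$ in $(\RR^m)^*$. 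Choosing an index $i$ with $\partial_\theta f_i(\bar X,\bar\theta_0)\neq0$, the implicit function theorem gives a smooth map $\theta_0:\mathcal{U}\to S^1$ satisfying $f_i(X,\theta_0(X))=0$. Shrinking $\mathcal{U}$ and choosing $\delta>0$ appropriately, Taylor expanding $f_i$ in $\theta$ at $\theta_0(X)$ yields the uniform transversality estimate
\[
 \|f(X,\theta)\|\;\geq\;|f_i(X,\theta)|\;\geq\;c_1\,|\theta-\theta_0(X)|\quad\text{for }|\theta-\theta_0(X)|\leq\delta,
\]
while $\|f(X,\theta)\|\geq c_2>0$ for $|\theta-\theta_0(X)|>\delta$; the constants $c_1,c_2,\delta$ are uniform over $\mathcal{U}$.

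Differentiating $H$ under the integral gives $\mathrm{d}H(X)=\frac{1}{2\pi}\int_0^{2\pi} h(X,\theta)\cdot\mathrm{d}_X f(X,\theta)\,\mathrm{d}\theta$, where $h(X,\theta)=f(X,\theta)/\|f(X,\theta)\|$ and $\mathrm{d}_X f$ is smooth with uniformly bounded norm. A direct computation yields $\|\partial_X h(X,\theta)\|\leq C_3/\|f(X,\theta)\|$ wherever $f\neq0$. Set $\eta=\|X-Y\|$ and note $|\theta_0(X)-\theta_0(Y)|\leq C_4\,\eta$. Split $\mathrm{d}H(X)-\mathrm{d}H(Y)$ into three $\theta$-regions relative to $\theta_0(X)$. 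On $|\theta-\theta_0(X)|>\delta$, $h$ is smooth in $X$ with bounded derivatives, giving an $O(\eta)$ contribution. On the near region $|\theta-\theta_0(X)|\leq M\eta$ (pick $M>2C_4$), the crude bound $|h|\leq1$ and the length $2M\eta$ give $O(\eta)$. On the far region $M\eta<|\theta-\theta_0(X)|\leq\delta$, the entire segment from $X$ to $Y$ stays in a zone where $\|f\|\geq c_1|\theta-\theta_0(X)|/4$, so $|h(X,\theta)-h(Y,\theta)|\leq C_5\,\eta/|\theta-\theta_0(X)|$, which integrates to
\[
 \int_{M\eta}^{\delta}\frac{C_5\,\eta}{r}\,\mathrm{d}r\;=\;C_5\,\eta\,\log\frac{\delta}{M\eta}\;=\;O\!\bigl(\eta\,\log(1/\eta)\bigr).
\]
Together with a routine $O(\eta)$ term coming from $\mathrm{d}_X f(X,\cdot)-\mathrm{d}_X f(Y,\cdot)$, this yields \eqref{eq:Lip-Log}.

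The main obstacle is making the constants $c_1,c_2,\delta,C_3,C_5$ uniform over all of $\mathcal{U}$, including as $X$ approaches $\widetilde{\mathcal{Z}}$: nothing forbids $\|f(X,\theta_0(X))\|$ from being arbitrarily small, yet the bounds above must not degrade. Hypothesis~\ref{it-lip-1} ensures that $\partial_\theta f_i(X,\theta_0(X))$ stays bounded away from zero on a small $\mathcal{U}$, giving uniform transversality; hypothesis~\ref{it-lip-2}, together with the full rank condition~\eqref{eq:fullrank}, makes $p\mapsto f(X,\theta)$ nondegenerate and controls $\mathrm{d}_X f$ uniformly in coordinates. One must also ensure that the segment $[X,Y]$ and its $\theta_0$-image stay inside the zone where the transversality bound applies, which forces $M>2C_4$ and a convex $\mathcal{U}$.
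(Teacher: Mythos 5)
Your argument is correct, and it reaches \eqref{eq:Lip-Log} by a genuinely different route than the paper. The paper reduces the theorem to the abstract Proposition~\ref{prop-C1-LipLog} (Point~\ref{point-LipLog}), whose proof builds a normal form near the singular locus (Lemmas~\ref{lem-chihat} and \ref{lem-coord}): a rotation $P(X)\in SO(m)$ and adapted coordinates in which $\mathsf{V}(\theta,X)$ splits into a transverse part $X_{\mathbf{I}}$ and a part vanishing linearly in $\theta-\widehat{\chi}(X)$, giving the two-variable lower bound $\|\mathsf{V}\|\gtrsim\sqrt{\|X_{\mathbf{I}}\|^2+\alpha^2(\theta-\widehat{\chi}(X))^2}$; the integrals are then re-centered at $\widehat{\chi}(X)$ and $\widehat{\chi}(Y)$ and the logarithm comes from $\int_{-\pi}^{\pi}\|X_{\mathbf{I}}-Y_{\mathbf{I}}\|\bigl(\|X_{\mathbf{I}}-Y_{\mathbf{I}}\|^2+4\alpha_0\varphi^2\bigr)^{-1/2}\mathrm{d}\varphi$. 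You instead keep only the weaker one-variable transversality bound $\|f(X,\theta)\|\geq c_1|\theta-\theta_0(X)|$ (from the scalar implicit function theorem applied to one component $f_i$), use $\|\partial_X(f/\|f\|)\|\leq C_3/\|f\|$, and split the $\theta$-integral at the scale $\eta=\|X-Y\|$ into near/intermediate/far zones, the logarithm coming from $\int_{M\eta}^{\delta}\eta\,r^{-1}\mathrm{d}r$; the uniformity issues you flag (Lipschitz continuity of $\theta_0$, convexity of the neighborhood, $M>2C_4$, the far-zone lower bound $c_2$) are exactly the right ones and are handled correctly. What each buys: your decomposition is more elementary (no $SO(m)$-valued normal form, no adapted coordinates) and in fact never uses hypothesis~\ref{it-lip-2} — the rank-$m$ condition enters the paper's route only through the submersion condition (\ref{eq:A123}.a) needed for the normal form — so your proof shows the estimate holds under hypothesis~\ref{it-lip-1} alone; the paper's normal form is slightly sharper structurally (the modulus is controlled by the transverse distance $\|X_{\mathbf{I}}-Y_{\mathbf{I}}\|$, and the estimate is plainly Lipschitz when both points lie over the singular set) and is packaged as a reusable general proposition. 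Two small points you share with the paper rather than gaps of your own: the argument tacitly requires $\bar p\neq0$ (at $p=0$ the function $H$, positively homogeneous of degree one, is not differentiable, and hypothesis~\ref{it-lip-1} says nothing there — the paper's proof likewise restricts to $\widetilde{\mathcal{Z}}\setminus\{p=0\}$), and your "differentiating under the integral" step is legitimate because formula \eqref{eq:H'} for $\mathrm{d}H$ is already available from Point~\ref{point-C1} of Proposition~\ref{prop-C1-LipLog}, used earlier for Theorem~\ref{prop-HnormC1}.
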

In the left-hand side, $\|.\|$ stands for the operator norm
in coordinates, see \S\ref{sec-not-d}.

\smallskip

%%%%%%%%%%%%%%%
% REMARK :
%%%%%%%%%%%%%%%

\refstepcounter{theorem}\paragraph{Remark~\thetheorem\ (Finsler geometry)}\label{rmk-Finsler}
If $H^2$ was an least \emph{twice} continuously differentiable, with a positive definite Hessian with respect to $p$, so
would be $N^2$ (see \eqref{eq:5}), and it would define a (reversible) \emph{Finsler metric}~\cite{Bao-Che-She00} on $\vari$.
The lack of differentiability calls for further developments.

%%%%%%%%%%%%%%%
% END OF REMARK
%%%%%%%%%%%%%%%

\medskip

Before proving these theorems, we state a more generic result, whose notations are totally independent from the rest of the paper.
Its proof is in the appendix.
\begin{proposition}
    \label{prop-C1-LipLog} 
Let $\mathsf{d}$ be a positive integer, $\varid$ an open subset of $\RR^{\mathsf{d}}$, $\mathsf{V}\!\!:S^1\times\varid\to\RR^m$ a smooth map ($C^\infty$),
$\widetilde{\mathcal{Z}}$ the subset of $\varid$ where $\mathsf{V}$ vanishes for some $\theta$ and $\mathsf{H}\!\!:\varid\!\to[0,+\infty)$
the average of the norm of $\mathsf{V}$:
\vspace{-.9ex}
\begin{equation}
  \label{eq:H-Ztilde}
  \mathsf{H}(X)=\frac1{2\pi}\int_0^{2\pi}\!\!\|\mathsf{V}(\theta,X)\|\,\mathrm{d}\theta\ ,\ \ \   
  \widetilde{\mathcal{Z}}=\{X\in \varid,\,\exists\theta
  \in S^1,\,\mathsf{V}(\theta,X)=0\}\ .
\end{equation}
\begin{remunerate}\vspace{-2ex}
\item \label{point-C1} Assume that, for all  $X$ in $\varid$, \vspace{-1.4ex}
  \begin{equation}
    \label{eq:8}
    \text{the set 
      $\{\theta\in S^1\,,\;\mathsf{V}(\theta,X)=0\}$
      has measure zero in $S^1$}.
  \end{equation}
$\ $\\[-4ex]
  Then $\mathsf{H}$ is continuously differentiable and, for all $X$, 
  \begin{equation}
    \label{eq:H'}
    \mathrm{d}\mathsf{H}(X).h=
    \frac1{2\pi}\int_0^{2\pi}\scal{\frac{\partial\mathsf{V}}{\partial X}(\theta,X).h}{\frac{\mathsf{V}(\theta,X)}{\|\mathsf{V}(\theta,X)\|}}
    \,\mathrm{d}\theta\ .
  \end{equation}
\item \label{point-LipLog}
Let $\mathsf{V}$ satisfy the following assumptions:

\vspace{-1.2\baselineskip}

\begin{equation}
\label{eq:A123}
\left.
\begin{array}{cl}
  \text{\textup{(a)}}&
  \mathsf{V}(\theta,X)=0\, \text{ for all } (\theta,X)\in S^1\times\varid \text{ such that }
  \mathsf{V}(\theta,X)=0,
\\[.2ex]
  \text{\textup{(b)}}&
  \text{for any $X\in\varid$, there is at most one $\theta$ such that $\mathsf{V}(\theta,X)=0$},
\\[.2ex]
  \text{\textup{(c)}}&
  \text{$\mathsf{V}$ and $\partial \mathsf{V}/\partial\theta$ do not vanish simultaneously},
\end{array}
\right\}
\end{equation}

\vspace{-.5\baselineskip}

 \noindent
and let $\bar{X}$ be in $\widetilde{\mathcal{Z}}$. There is a neighborhood $\ouv$ of $\bar{X}$ in $\varid$, and a
constant $K>0$ such that, for all $X,Y$ in $\ouv$,\vspace{-1ex} 
\begin{eqnarray}
  \label{eq:LipLog}
  \Bigl\| \mathrm{d}\mathsf{H}(X)-\mathrm{d}\mathsf{H}(Y) \Bigr\| &\leq&
K\; \|X-Y\|\;\ln\frac1{\|X-Y\|}\ .
\end{eqnarray}\vspace{-2ex}
\end{remunerate}
\end{proposition}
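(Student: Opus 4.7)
The plan is to prove Point~\ref{point-C1} by classical differentiation under the integral sign, and Point~\ref{point-LipLog} by a near/far decomposition around the zero of $\mathsf{V}$. In both cases the elementary fact I will exploit is that $v\mapsto\|v\|$ is $1$-Lipschitz on $\RR^m$ and smooth off the origin, so $\theta\mapsto\|\mathsf{V}(\theta,X)\|$ is a.e.\ differentiable in $X$.

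For Point~\ref{point-C1}, I would first fix $X\in\varid$ and a direction $h$, and observe that for every $\theta$ with $\mathsf{V}(\theta,X)\neq0$, the difference quotient $(\|\mathsf{V}(\theta,X+th)\|-\|\mathsf{V}(\theta,X)\|)/t$ converges as $t\to0$ to the expected integrand in~(\ref{eq:H'}). Hypothesis~(\ref{eq:8}) ensures that this happens for a.e.\ $\theta$, while a local Lipschitz constant of $X\mapsto\mathsf{V}(\theta,X)$ bounds the quotients uniformly, so dominated convergence yields Gateaux differentiability with formula~(\ref{eq:H'}). A second application of dominated convergence, again using~(\ref{eq:8}) at the limit point, shows that $X\mapsto\mathrm{d}\mathsf{H}(X)$ is continuous; in finite dimension this promotes Gateaux differentiability to $C^1$.

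For Point~\ref{point-LipLog}, I would first describe the geometry of the zero set near $\bar X$. By~(b) there is a unique $\bar\theta\in S^1$ with $\mathsf{V}(\bar\theta,\bar X)=0$, and by~(c), $\partial_\theta\mathsf{V}(\bar\theta,\bar X)\neq0$. Shrinking a neighborhood~$\ouv$, I would decompose $\mathsf{V}(\bar\theta,X)$ into its components parallel and orthogonal to $\partial_\theta\mathsf{V}(\bar\theta,X)$ to produce two smooth functions $\theta^\ast\!:\ouv\to S^1$ and $r\!:\ouv\to[0,+\infty)$, the latter vanishing exactly on $\widetilde{\mathcal Z}$, together with a second-order Taylor expansion at $\bar\theta$ yielding a uniform quadratic bound
\[\|\mathsf{V}(\theta,X)\|^2\,\geq\,c\bigl((\theta-\theta^\ast(X))^2+r(X)^2\bigr),\]
with $c>0$; in particular $\theta^\ast$ is Lipschitz on $\ouv$. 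Setting $\delta=\|X-Y\|$ and denoting by $\Phi(\theta,X)\in(\RR^{\mathsf{d}})^*$ the integrand in~(\ref{eq:H'}) --- so that $\|\Phi\|\leq\|\partial_X\mathsf{V}\|$ and $\|\partial_X\Phi\|\leq C/\|\mathsf{V}\|$ where $\mathsf{V}\neq0$ --- I would then split $S^1=A\cup B$ with $A=\{\theta:|\theta-\theta^\ast(X)|\leq K\delta\}$ for $K$ large enough that $A$ also contains the corresponding neighborhood of $\theta^\ast(Y)$. On the far set $B$, the quadratic lower bound gives $\|\mathsf{V}(\theta,Z)\|\geq c'|\theta-\theta^\ast(X)|$ uniformly for $Z$ on the segment $[X,Y]$, so the mean value inequality combined with $\int_B\mathrm{d}\theta/|\theta-\theta^\ast(X)|=O(\ln(1/\delta))$ yields $\int_B\|\Phi(\theta,X)-\Phi(\theta,Y)\|\mathrm{d}\theta\leq K_1\,\delta\ln(1/\delta)$. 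On $A$, of measure $O(\delta)$, the trivial bound on $\|\Phi\|$ gives $\int_A\|\Phi(\theta,X)-\Phi(\theta,Y)\|\mathrm{d}\theta\leq K_2\,\delta$; summing produces~(\ref{eq:LipLog}).

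The main obstacle I anticipate is establishing the quadratic lower bound uniformly on a genuine neighborhood $\ouv$ that captures the $X$-dependence of both $\theta^\ast$ and $r$. This is where~(b) and~(c) are both used essentially: (b) rules out a second zero of $\mathsf{V}(\cdot,X)$ that could spoil the bound on the far set $B$, while~(c) keeps $\|\partial_\theta\mathsf{V}\|$ away from zero so that the zero of $\mathsf{V}(\cdot,X)$ behaves like a simple root and the parallel/orthogonal decomposition is controlled. Without~(c) one should expect a modulus of continuity worse than LipLog.
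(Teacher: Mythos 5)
Your proposal is correct, and on Point~\ref{point-C1} it is essentially the paper's own argument: both rest on \eqref{eq:8} plus dominated convergence, the only difference being that the paper gets Fr\'echet differentiability directly from the estimate $\|\mathsf{V}(\theta,X+h)-\mathsf{V}(\theta,X)-\frac{\partial\mathsf{V}}{\partial X}(\theta,X).h\|\leq k\|h\|^2$, while you pass through Gateaux derivatives and continuity of the partials, which is equivalent in finite dimension. On Point~\ref{point-LipLog} the preparation coincides but the core estimate is genuinely different. Your $\theta^*$ is the paper's $\widehat{\chi}$ of Lemma~\ref{lem-chihat} (implicit function theorem applied to $\scal{\partial\mathsf{V}/\partial\theta}{\mathsf{V}}=0$), and the uniform lower bound you single out as the main obstacle is exactly \eqref{eq:10} of Lemma~\ref{lem-coord}, proved there the way you sketch: the orthogonality kills the cross term in the Taylor expansion at $\widehat{\chi}(X)$, and compactness together with assumption (\ref{eq:A123}.b) handles $\theta$ away from $\widehat{\chi}(X)$. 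From there the paper builds a normal form---a rotation $P(X)\in SO(m)$ and adapted coordinates in which $X_{\mathbf{I}}$ records the value at the critical angle---and extracts the logarithm from the explicit primitive of $\|X_{\mathbf{I}}-Y_{\mathbf{I}}\|/\sqrt{\|X_{\mathbf{I}}-Y_{\mathbf{I}}\|^2+4\alpha_0\varphi^2}$; you instead split $S^1$ into a near zone $|\theta-\theta^*(X)|\leq K\delta$ of measure $O(\delta)$, estimated crudely by $\|\Phi\|\leq\|\partial\mathsf{V}/\partial X\|$ (note the integrand $\Phi$ of \eqref{eq:H'} is only defined a.e.\ there, which is harmless), and a far zone where $\mathsf{V}$ stays nonzero along the segment $[X,Y]$ provided $K$ dominates the Lipschitz constant of $\theta^*$ and $\ouv$ is convex; the mean value inequality with $\|\partial_X\Phi\|\leq C/\|\mathsf{V}\|$ and $\int_{K\delta\leq|\theta-\theta^*(X)|\leq\pi}\mathrm{d}\theta/|\theta-\theta^*(X)|=O(\ln(1/\delta))$ then give the $\delta\ln(1/\delta)$ term. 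Your variant is more elementary in that it dispenses with $P(X)$, the adapted coordinates and the paper's case distinction $X_{\mathbf{I}}=Y_{\mathbf{I}}=0$ versus not, at the price of invoking second $X$-derivatives of $\mathsf{V}$ (through $\partial_X\Phi$) and of the bookkeeping in the choice of $K$; the paper's normal form costs more preparation but works only with first-order differences of $\mathsf{V}$, $P$, $W_1$, yields explicit constants, and shows in passing that the modulus is plainly Lipschitz when both points lie in $\widetilde{\mathcal{Z}}$. When writing yours up, shrink $\ouv$ so that $\|X-Y\|<1$, hence $\ln\frac1{\|X-Y\|}\geq1$ and the near-zone $O(\delta)$ contribution is absorbed into \eqref{eq:LipLog}.
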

\noindent In the left-hand side of \eqref{eq:LipLog}, $\|.\|$ stands for the operator norm, see \S\ref{sec-not-d}.

\smallskip

{\em Proof of Theorem~\ref{prop-HnormC1}.}
This is a local property.
  We operate in coordinates and   apply
  Proposition~\ref{prop-C1-LipLog} (Point \ref{point-C1}) with $\mathsf{d}=2n$, $\varid$
  a neighborhood of a point where $p\neq0$, $X=(\III,p)\in\RR^{2n}$ and $\mathsf{V}(\theta,X)=\langle
  p,\GG(\theta,\III)\rangle$.
  The rank condition implies that derivatives of all orders of the map
  $\theta\mapsto \mathsf{V}(\theta,X)$ never vanish at the same point, so that its zeroes are isolated and the set $\{\theta\in
  S^1\,,\;\mathsf{V}(\theta,X)=0\}$ is finite and \textit{a fortiori} has measure zero; hence $H$ is continuously differentiable
  outside $\{p=0\}$.  Since $0\leq H(\III,p)\leq k\|p\|$ for some local constant $k$ the derivative of $H^2$ is zero at all
  points $(\III,0)$ and, since (\ref{eq:H'}) implies that the norm of $\mathrm{d}H(\III,p)$ at neighboring points where
  $p\neq0$ is bounded, the derivative of $H^2$ at these points tends to zero as $p\to0$.  $H^2$ is therefore
  continuously differentiable everywhere.
\hfill\endproof

{\em Proof of Theorem~\ref{th-ham-LipLog}.}
Smoothness outside $\widetilde{\mathcal{Z}}$ is obvious from the expression \eqref{eq:Hamiltonian} of $H$; 
inequality \eqref{eq:Lip-Log} is a consequence of Proposition~\ref{prop-C1-LipLog} (Point \ref{point-LipLog}), applied with $\mathsf{d}=2n$, 
$X=(\III,p)\in\RR^{2n}$, $\mathsf{V}(\theta,X)=\langle p,\GG(\theta,\III)\rangle$ and $\varid$ a neighborhood of a point of
$\widetilde{\mathcal{Z}}\setminus\{p=0\}$; it is clear that points (i) and (ii) imply the three conditions~\eqref{eq:A123}.
\hfill\endproof

%%%%%%%%%%%%%%%%
%SUBSECTION MINIMUM TIME
%%%%%%%%%%%%%%%%
\subsection{Application to the minimum time problem}
\label{sec-Tmin}

Fix  two points $\III_0, \III_1$ in $\vari$ and
consider the time optimal problem associated to \eqref{eq-sysO} for $\varepsilon>0$:
\begin{equation}
   \label{eq-pb-Tmin}
(\mathcal{P}_\varepsilon),\;\varepsilon>0:\qquad\left.
\begin{array}{l}
  \dot{\III}(t)=\GG(t/\varepsilon,\III(t))u(t), \ u(t)\in B^m,\;t\in[0,T],
\\
   \III(0)=\III_0,\, \III(T)=\III_1
\end{array}
\right\}
\ \  \min T \,,
\end{equation}
and  the time optimal problem associated to the average system: 
\begin{equation}
\label{eq-pb-TminMoy}
(\mathcal{P}_0):\qquad\left.
\begin{array}{l}
\dot{\III}(t) \in \EE(\III(t)), \;t\in[0,T],
\\
   \III(0)=\III_0,\, \III(T)=\III_1
\end{array}
\right\}
\ \  \min T \,.
\end{equation}
Call $T_\varepsilon(\III_0,\III_1)$ the minimum time for $(\mathcal{P}_\varepsilon)$, $\varepsilon>0$ and $T_0(\III_0,\III_1)$ the one for $(\mathcal{P}_0)$;
when no confusion arises, we write $T_\varepsilon$ and $T_0$.

Let us develop \eqref{eq-pb-Tmin}--\eqref{eq-pb-TminMoy}:
concerning \eqref{eq-pb-Tmin}, 
$T_\varepsilon$ is the infimum of the set of $T$'s such that there is 
an admissible control $u(.) \!\!:[0,T]\!\to\!B^m\!$, and % an absolutely continuous
$\III(.) \!\!:[0,T]\!\to \!\vari$ 
satisfying $\III(0)=\III_0$, $\III(T)=\III_1$ and 
$\dot{\III}(t)=\GG(t/\varepsilon,\III(t))u(t)$ for almost all $t$; 
Proposition \ref{prop-controllability}, point
\ref{item-cont2} implies that this set is nonempty, hence $T_\varepsilon$ is finite.
Concerning \eqref{eq-pb-TminMoy}, 
$T_0$ is the infimum of the set of $T$'s such that there is 
% an absolutely continuous
$\III(.)\!:[0,T]\to\vari$ 
satisfying $\III(0)=\III_0$, $\III(T)=\III_1$ and
$\dot{\III}(t)\in\EE(\III(t))$ for almost all $t$,
$T_0$ is finite from Proposition \ref{prop-Finsler}, point \ref{it-PropFinsler-3}.
A \emph{solution} to $(\mathcal{P}_\varepsilon)$ (resp. to $(\mathcal{P}_0)$) is $\III(.), u(.)$ (resp. $\III(.)$) as
above with $T=T_\varepsilon$ (resp. $T=T_0$). 
In general, the minimum $T_\varepsilon$ or $T_0$ need not be reached, i.e. there need not be a solution.

\smallskip

\begin{lemma}
  \label{lem-TT}
Assume the rank condition (\ref{eq:fullrank}).

1. There is a neighborhood $\mathcal{W}$ of any $x_1$ and two constants $\alpha_0>0$ and $C_3>0$ such that, for all $y$
    in $\mathcal{W}$, $T_\varepsilon(y,x_1)\leq2\pi\varepsilon+C_3 d(x_1,y)$.

2.  For any $x_0,x_1',x_1$ in $\vari$, one has $T_\varepsilon(x_0,x_1)\leq T_\varepsilon(x_0,x'_1)+T_\varepsilon(x'_1,x_1)+2\pi\varepsilon$.
\end{lemma}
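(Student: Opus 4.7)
The rank assumption lets us invoke Proposition~\ref{prop-controllability} (Point~\ref{item-cont1}) for Point~1 and guarantees that $T_\varepsilon$ is finite (Point~2). The organising idea shared by both points is that the clock $t/\varepsilon$ advances through a full period in time exactly $2\pi\varepsilon$; exploiting the $2\pi$-periodicity of $\GG$ in $\theta$ allows us to concatenate trajectories cleanly.

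\textbf{Point~2.} Fix $\delta>0$ and pick trajectories $(u_1,\III_1)$ on $[0,T_1]$ steering $x_0$ to $x_1'$ with $T_1\le T_\varepsilon(x_0,x_1')+\delta$ and $(u_2,\III_2)$ on $[0,T_2]$ steering $x_1'$ to $x_1$ with $T_2\le T_\varepsilon(x_1',x_1)+\delta$. Let $\delta_1\in[0,2\pi\varepsilon)$ be the unique real making $T_1+\delta_1$ a multiple of $2\pi\varepsilon$. I define $u$ on $[0,T_1+\delta_1+T_2]$ by using $u_1$ on $[0,T_1]$, zero on $[T_1,T_1+\delta_1]$ (so $\III$ idles at $x_1'$), and $u_2(t-T_1-\delta_1)$ on the final piece. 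Because $(T_1+\delta_1)/\varepsilon\in 2\pi\ZZ$, on the last piece $t/\varepsilon\equiv(t-T_1-\delta_1)/\varepsilon\bmod 2\pi$, so by $2\pi$-periodicity of $\GG$ the trajectory there is the shifted copy of $\III_2$, landing at $x_1$. Letting $\delta\to0$ yields the claimed inequality.

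\textbf{Point~1.} Apply Proposition~\ref{prop-controllability} (Point~\ref{item-cont1}) at $x_1$ with $T=2\pi$, obtaining a coordinate ball $\mathcal{W}$ of Euclidean radius $\alpha_0$, a constant $c_3$, and maps $\chi_y:\mathcal{B}(y,\alpha_0)\to L^\infty([0,2\pi],\RR^m)$ with $\chi_y(y)=0$ and Lipschitz constant $c_3$; in particular $\|\chi_y(y')\|_\infty\le c_3\|y'-y\|$. For $y\in\mathcal W$ set $N=\max\bigl(1,\lceil c_3\|x_1-y\|/\varepsilon\rceil\bigr)$ and subdivide the Euclidean segment from $y$ to $x_1$ into $N$ equal parts $y=y_0,y_1,\dots,y_N=x_1$, each of length $\|x_1-y\|/N\le\min(\alpha_0,\varepsilon/c_3)$. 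I then define $u\!:\![0,2\pi\varepsilon N]\to B^m$ by $u(t)=\varepsilon^{-1}\chi_{y_i}(y_{i+1})(t/\varepsilon-2\pi i)$ on each $[2\pi i\varepsilon,2\pi(i+1)\varepsilon]$; the bound on $\|\chi_{y_i}(y_{i+1})\|_\infty$ gives $\|u\|_\infty\le1$. On the $i$-th subinterval, the change of variables $s=t/\varepsilon-2\pi i$ combined with the $2\pi$-periodicity of $\GG$ turns \eqref{eq-sysO} into $\mathrm d\III/\mathrm d s=\GG(s,\III)\,\varepsilon u(\varepsilon(s+2\pi i))=\GG(s,\III)\chi_{y_i}(y_{i+1})(s)$ starting at $y_i$, so by construction of $\chi_{y_i}$ the endpoint is $y_{i+1}$. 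The total time is
\[
2\pi\varepsilon N\ \le\ 2\pi\varepsilon\Bigl(\tfrac{c_3\|x_1-y\|}{\varepsilon}+1\Bigr)\ =\ 2\pi\varepsilon+2\pi c_3\|x_1-y\|\ \le\ 2\pi\varepsilon+C_3\,d(x_1,y),
\]
with $C_3=2\pi c_3/k_1$ coming from the Lipschitz equivalence of \S\ref{sec-not-d}.

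The only place where care is needed is the simultaneous control of three constraints on each subsegment: $y_{i+1}\in\mathcal B(y_i,\alpha_0)$, $\|y_{i+1}-y_i\|\le\varepsilon/c_3$, and the whole polygonal path staying in the chart $\mathcal W$. Taking $\mathcal W$ to be a convex (in coordinates) ball of radius $\alpha_0$ around $x_1$ settles the geometric issue, and the particular choice of $N$ above settles the size issue uniformly in $\varepsilon>0$; the delicate feature that makes everything fit together without a ``phase-synchronisation gap'' between segments is that each subsegment has length exactly $2\pi\varepsilon$, so the fast clock ends each step at the same phase where it started.
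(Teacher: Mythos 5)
Your proof is correct and takes essentially the same route as the paper's: apply Proposition~\ref{prop-controllability} with $T=2\pi$, rescale time and control by $\varepsilon$, and concatenate $2\pi\varepsilon$-long arcs using the $2\pi$-periodicity of $\GG$ (your explicit subdivision of the segment into $N$ steps is exactly what the paper compresses into ``concatenating controls''), while Point~2 is obtained, as in the paper, by inserting a zero control until the next multiple of $2\pi\varepsilon$ to re-synchronise the phase. The only cosmetic differences are that you steer $y$ to $x_1$ (matching the statement, whereas the paper's wording steers $x_1$ to $y$) and that you make explicit the facts $\chi_y(y)=0$ and the convexity/size bookkeeping for the intermediate points, both implicit in the paper.
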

\smallskip
\begin{proof}
  Apply Proposition~\ref{prop-controllability}, point \ref{item-cont1} with $T=2\pi$, using as a distance in
  $\mathcal{W}$ the Euclidean norm in some coordinates: for any two points $y,y'$ in
  $\mathcal{W}$ such that $\|y-y'\|\leq\alpha_0$, there is a control defined on $[0,2\pi]$, with $L^\infty$ norm smaller
  than $c_3\|y-y'\|$ that brings $y'$ at time 0 to $y$ at time $2\pi$ for system (\ref{eq:sys01}); rescaling time and
  control by $\varepsilon$ yields, if $c_3\|y-y'\|\leq\varepsilon$, a control with $L^\infty$ norm less than 1 that
  brings $y'$ at time 0 to $y$ at time $2\pi\varepsilon$ for system  (\ref{eq-sysO}) and hence, by concatenating
  controls and using periodicity of $\mathcal{G}$, for any positive integer $k$, a control  with $L^\infty$ norm less than 1 that
  brings $y'$ at time 0 to $y$ at time $2k\pi\varepsilon$ for system  (\ref{eq-sysO}) if $c_3\|y-y'\|\leq
  k\varepsilon$. In other words, $T_\varepsilon(y',y)\leq 2\pi(\varepsilon+c_3\|y-y'\|)$. Take $y'=x_1$ and $2\pi c_3/C_3$ 
  the ratio between the Euclidean norm and the distance $d$; this proves point 1.
     Point 2 follows from using periodicity of $\mathcal{G}$ and concatenating controls while inserting a zero control between
  time $T_\varepsilon(y',y)$ and the next multiple of $2\pi$.
\hfill\end{proof}

\smallskip

\begin{theorem}[limit of minimum time]
\label{prop-limT}
Assume the rank condition (\ref{eq:fullrank}).
\begin{remunerate}
\item\label{limT-it1} $T_\varepsilon$ is bounded as $\varepsilon\to0$ and
$\  \limsup_{\varepsilon\to0} T_\varepsilon\leq T_0\ .$
\item\label{limT-it2} If, for $\varepsilon>0$ small enough, each $(\mathcal{P}_\varepsilon)$ has a solution
$\III_\varepsilon:[0,T_\varepsilon]\to\vari$ and there exists a compact $\Set{K}\subset\vari$ such that $\III_\varepsilon([0,T_\varepsilon])\subset\Set{K}$
for all $\varepsilon>0$ small enough, then 
all accumulation points of the compact family $(\III_\varepsilon(.))_{\varepsilon>0}$ in $C^0([0,T_0],\vari)$ are solutions of $(\mathcal{P}_0)$ and
$\ \ \displaystyle \lim_{\varepsilon\to0}T_\varepsilon=T_0\ .$
\end{remunerate}
\end{theorem}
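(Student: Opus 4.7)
My plan is to combine the two convergence directions of Theorem~\ref{th-conv-sysM} with the short-time controllability bounds of Lemma~\ref{lem-TT}; zero-control extensions will bridge the varying minimum times.

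\emph{Point \ref{limT-it1}.} I would fix an arbitrary $T'>T_0$. Since $0\in\EE(\III)$ for every $\III$ (take $\mathscr{U}=0$ in \eqref{eq:calE}), any solution of \eqref{eq-sysM} arriving at $\III_1$ may be prolonged by staying at rest; the definition of $T_0$ as an infimum therefore yields a solution $\bar\III:[0,T']\to\vari$ of \eqref{eq-sysM} with $\bar\III(0)=\III_0$ and $\bar\III(T')=\III_1$. Applying Theorem~\ref{th-conv-sysM} point~\ref{it-thconv-a} to $\bar\III$ produces controls $\overline u_\varepsilon$ and constants $c,\varepsilon_0>0$ such that the corresponding solutions $\III_\varepsilon$ of \eqref{eq-sysO} starting at $\III_0$ satisfy $d(\III_\varepsilon(T'),\III_1)\leq c\varepsilon$ for $\varepsilon<\varepsilon_0$. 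Lemma~\ref{lem-TT} point~1 then provides a control bringing $\III_\varepsilon(T')$ to $\III_1$ in time at most $2\pi\varepsilon+C_3 c\varepsilon$ (once $c\varepsilon<\alpha_0$), and its point~2 allows concatenation with the trajectory on $[0,T']$, yielding
\[
T_\varepsilon\;\leq\;T'+(4\pi+C_3 c)\,\varepsilon.
\]
Taking the $\limsup$ as $\varepsilon\to 0$ and then letting $T'\downarrow T_0$ proves the claim.

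\emph{Point \ref{limT-it2}.} Let $T_{\max}>T_0$ be any upper bound for $\{T_\varepsilon:\varepsilon\text{ small}\}$, provided by point~\ref{limT-it1}. I would extend each optimal solution $\III_\varepsilon:[0,T_\varepsilon]\to\Set{K}$ to $[0,T_{\max}]$ by using the zero control past $T_\varepsilon$: the extension remains at $\III_1\in\Set{K}$ on $[T_\varepsilon,T_{\max}]$, is still a solution of \eqref{eq-sysO}, and still lives in $\Set{K}$. Consider any accumulation point of this family in $C^0([0,T_{\max}],\vari)$: extract $\varepsilon_n\to 0$ with $\III_{\varepsilon_n}\to\III^*$ uniformly and, refining once more, with $T_{\varepsilon_n}\to T^*\in[0,T_0]$. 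Theorem~\ref{th-conv-sysM} point~\ref{it-thconv-b} asserts that $\III^*$ is a solution of \eqref{eq-sysM} on $[0,T_{\max}]$; uniform convergence together with $\III_{\varepsilon_n}(T_{\varepsilon_n})=\III_1$ and $T_{\varepsilon_n}\to T^*$ gives $\III^*(0)=\III_0$ and $\III^*(T^*)=\III_1$. Hence $\III^*|_{[0,T^*]}$ is admissible for $(\mathcal{P}_0)$, forcing $T^*\geq T_0$ and therefore $T^*=T_0$. Every accumulation point of $(T_\varepsilon)$ thus equals $T_0$, and every accumulation point of $(\III_\varepsilon|_{[0,T_0]})$ in $C^0([0,T_0],\vari)$ is a solution of $(\mathcal{P}_0)$.

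\emph{Main obstacle.} The delicate point is reconciling the varying horizons $T_\varepsilon$ with the common time interval required by Theorem~\ref{th-conv-sysM} point~\ref{it-thconv-b} and by $C^0$-compactness; the zero-control ``pause at $\III_1$'' trick, which crucially relies on the attainment hypothesis of point~\ref{limT-it2}, is what bridges this gap.
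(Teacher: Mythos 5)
Your proof is correct and takes essentially the same route as the paper: Point~\ref{limT-it1} combines Theorem~\ref{th-conv-sysM} (point~\ref{it-thconv-a}) with both parts of Lemma~\ref{lem-TT}, the only (cosmetic) difference being that the paper uses a minimizing sequence of average trajectories on $[0,T_0+\beta_k]$ where you pause at $\III_1$ (using $0\in\EE(\III)$) to work on a fixed horizon $T'>T_0$. Point~\ref{limT-it2} is exactly the paper's argument: extend the optimizers by $\III_1$ past $T_\varepsilon$, extract a subsequence with $T_{\varepsilon_n}\to T^*$, invoke Theorem~\ref{th-conv-sysM} (point~\ref{it-thconv-b}), and conclude $T^*=T_0$ from the definition of $T_0$ together with Point~\ref{limT-it1}.
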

\begin{proof}
  Consider a minimizing sequence for problem $(\mathcal{P}_0)$, i.e. solutions $\III^k:[0,T_0+\beta_k]\to\vari$ of the
  average system (\ref{eq-sysM}) with $(\beta_k)$ a sequence of positive numbers that tends to zero and $x^k(0)=x_0$,
  $x^k(T_0+\beta_k)=x_1$ for all $k$. For each $x^k$, there is, according to Theorem~\ref{th-conv-sysM}, a family
  $(\,x^k_\varepsilon(.)\,)_{\varepsilon>0}$ such that each $x^k_\varepsilon(.)$ is a solution of (\ref{eq-sysO}) with
  $x^k_\varepsilon(0)=x_0$ and $d(x^k_\varepsilon(t),x^k(t))\leq c_1\varepsilon$ for all $t$ in $[0,T_0+\beta_k]$. In
  particular $d(x^k_\varepsilon(T_0+\beta_k),x^k(t)_1)\leq c_1\varepsilon$.  Now, from Lemma~\ref{lem-TT},
  $T_\varepsilon(\,x^k_\varepsilon(T_0+\beta_k)\,,x_1)\leq(2\pi+c_1C_3)\varepsilon$; hence, from the second point of
  that lemma (with $x'_1=x^k_\varepsilon(T_0+\beta_k)\,$), one has $T_\varepsilon = T_\varepsilon(x_0,x_1)\leq
  T_0+\beta_k+(4\pi+c_1C_3)\varepsilon$ and, letting $k$ go to infinity, $T_\varepsilon\leq
  T_0+(4\pi+c_1C_3)\varepsilon$; this implies Point~\ref{limT-it1}. Let us turn to point 2.

  Extend $x_\varepsilon$ on $[0,\overline{T}]$, with $\overline{T}$ an upperbound of
  $T_\varepsilon$, by taking $x_\varepsilon(t)=x_1$ for $t$ in $[T_\varepsilon,\overline{T}]$. Any sequence
  $(x_{\varepsilon_k}(.))_{k\in\NN}$ with $\lim\varepsilon_k=0$ is compact in $C^0([0,\overline{T}],\vari)$: take a
  convergent subsequence such that $T_{\varepsilon_k}$ also converges to some $T^*$. 
  The uniform limit goes through $x_0$ at time 0 and $x_1$ at time $T^*$ and is, by Theorem~\ref{th-conv-sysM},
  a solution of the average system (\ref{eq-sysM}), hence $T^*\geq T_0$ by definition of $T_0$. This, together with Point~\ref{limT-it1}, implies
  Point~\ref{limT-it2} because $T^*$ can be any accumulation point of
  $(T_\varepsilon)$ as $\varepsilon\to 0$.
\hfill\end{proof}

\medskip

Let us now write the Pontryagin Maximum Principle~\cite{Pont-Bol-Gam-M74}  both for $(\mathcal{P}_\varepsilon)$, $\varepsilon>0$ and for
$(\mathcal{P}_0)$ and see how they are related.

The \emph{extremals} of problem $(\mathcal{P}_\varepsilon)$, $\varepsilon>0$, are absolutely continuous maps $t\mapsto(x(t),p(t))$
solution %, at least at times when $\langle p,\mathcal{G}(t/\varepsilon,x)\rangle\neq0$, 
to
 \begin{equation}
 \label{eq-sys-maxH}
     \dot{p} =  -  \frac{\partial H_\varepsilon}{\partial x}\,,\ \;
     \dot{x} =   \frac{\partial H_\varepsilon}{\partial p}
\qquad\text{with}\qquad H_\varepsilon(t,p,x)=\|\langle p,\mathcal{G}(t/\varepsilon,x)\rangle\|\,,
\end{equation}
whose right-hand side is discontinuous on $\mathscr{S}_\varepsilon=\{(x,p,t),\;\langle
p,\mathcal{G}(t/\varepsilon,x)\rangle=0\}$ (the ``switching surface''), where it is in fact not defined.

The \emph{extremals} of %problem 
$(\mathcal{P}_0)$ are absolutely continuous $t\mapsto(x(t),p(t))$ solution to
 \begin{equation}
 \label{eq-sys-maxHmoy}
     \dot{p} =  -  \frac{\partial H}{\partial x}\,,\ \ \ 
     \dot{x} =   \frac{\partial H}{\partial p}\;.
\end{equation}
with $H$ given by (\ref{eq:Hamiltonian}). The right-hand sides are continuous according to Theorem~\ref{prop-HnormC1}.

\medskip

\begin{theorem}
\label{th-ham}
If an absolutely continuous map $t \mapsto \overline{x}(t)$ defined on $[0,\overline{T}]$ is a solution of
$(\mathcal{P}_\varepsilon)$, $\varepsilon>0$, (resp. of $(\mathcal{P}_0)$), 
then there exists $t\mapsto \overline{p}(t)$ defined on $[0,\overline{T}]$ such that $t\mapsto(\overline{p}(t),\overline{\III}(t))$ is an
extremal of $(\mathcal{P}_\varepsilon)$, $\varepsilon>0$ (resp. of $(\mathcal{P}_0)$).
\end{theorem}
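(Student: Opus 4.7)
The plan splits into the two cases and invokes two different formulations of the Pontryagin maximum principle.

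For problem $(\mathcal{P}_\varepsilon)$ with $\varepsilon>0$, I would apply the classical PMP of~\cite{Pont-Bol-Gam-M74} to the minimum-time problem with control-affine dynamics $\dot{\III}=\GG(t/\varepsilon,\III)u$, $\|u\|\leq1$, and fixed endpoints. The pre-Hamiltonian $\mathcal{H}_\varepsilon(t,\III,p,u)=-p_0+\langle p,\GG(t/\varepsilon,\III)\rangle\,u$ is linear in $u$, so its pointwise maximum over the unit ball is, away from the switching surface $\mathscr{S}_\varepsilon$, attained at the bang control $u^*=\langle p,\GG(t/\varepsilon,\III)\rangle^\top/\|\langle p,\GG(t/\varepsilon,\III)\rangle\|$, with value $-p_0+H_\varepsilon(t,p,\III)$. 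Substituting $u^*$ into the state equation and the adjoint equation $\dot p=-\partial\mathcal{H}_\varepsilon/\partial\III$ gives exactly the Hamiltonian system~\eqref{eq-sys-maxH} at every $t$ such that $(\overline\III(t),\overline p(t),t)\notin\mathscr{S}_\varepsilon$; the PMP simultaneously delivers the adjoint $\overline p(t)$ and its nontriviality.

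For problem $(\mathcal{P}_0)$, I would invoke the PMP in its version for differential inclusions with convex compact values, whose applicability is granted by Proposition~\ref{lem-conv}. It yields a nontrivial adjoint $\overline p(t)$ such that the maximum condition $\langle \overline p(t),\dot{\overline\III}(t)\rangle=\max_{v\in\EE(\overline\III(t))}\langle\overline p(t),v\rangle=H(\overline\III(t),\overline p(t))$ holds almost everywhere, the last equality being exactly~(\ref{eq:EHbis}) of Proposition~\ref{lem-E-H}. Because $H$ is continuously differentiable by Theorem~\ref{prop-HnormC1}, the envelope identity $\partial H/\partial p\,(\overline\III,\overline p)=\overline{\GG}(\overline\III,\mathscr{U}^*_{\overline p,\overline\III})$ turns the maximum condition into $\dot{\overline\III}=\partial H/\partial p$, and the generalized adjoint inclusion $\dot p\in-\partial_{\III}H$ collapses to the classical $\dot p=-\partial H/\partial\III$; together this gives~\eqref{eq-sys-maxHmoy}.

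The main obstacle is rigorously justifying the PMP for $(\mathcal{P}_0)$, since the textbook PMP does not apply directly: one can either view~\eqref{eq-sysM} as the infinite-dimensional control system $\dot{\III}=\overline{\GG}(\III,\mathscr{U})$ of Remark~\ref{rmk-controlDimInf} and invoke an infinite-dimensional maximum principle, or use the differential-inclusion formulation above. Both paths rest on the same algebraic identity: the support function of $\EE(\III)$ is exactly $H(\III,\cdot)$ defined by~(\ref{eq:Hamiltonian}), so the Hamiltonian appearing in the maximum condition of PMP coincides with the one used to define extremals of $(\mathcal{P}_0)$; combined with the $C^1$ regularity of $H$ from Theorem~\ref{prop-HnormC1}, this turns the PMP conditions into the Hamiltonian system~\eqref{eq-sys-maxHmoy}.
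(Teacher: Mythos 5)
Your overall route is the same as the paper's: the classical Pontryagin maximum principle for $(\mathcal{P}_\varepsilon)$, $\varepsilon>0$, with the bang control $u^*=\langle p,\GG\rangle^\top/\|\langle p,\GG\rangle\|$ off the switching surface, and a nonsmooth/differential-inclusion maximum principle for $(\mathcal{P}_0)$ combined with the identification, via \eqref{eq:EHbis}, of the support function of $\EE(\III)$ with the averaged Hamiltonian $H$ of \eqref{eq:Hamiltonian}. The first half and the final identification are fine.

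There is, however, a genuine gap in your treatment of $(\mathcal{P}_0)$: you claim that the applicability of the maximum principle for differential inclusions ``is granted by Proposition~\ref{lem-conv}'', i.e.\ by convexity, compactness and symmetry of each set $\EE(\III)$. That is a purely pointwise property and is not the hypothesis that makes such a theorem work; the version the paper relies on (Theorem~9.1 in \cite[Chapter 4]{Clar:98}) requires the set-valued map $\III\mapsto\EE(\III)$ to be \emph{locally Lipschitz} (in the Hausdorff metric) as a function of the state. Verifying this regularity in $\III$ is precisely the one nontrivial check in the paper's proof of the averaged case: in local coordinates one estimates
\begin{equation*}
\delta\bigl(\EE(x_1),\EE(x_2)\bigr)\;=\;\max_{\|\mathscr{U}_1\|_\infty\leq1}\ \min_{\|\mathscr{U}_2\|_\infty\leq1}
\bigl\|\overline{\GG}(x_1,\mathscr{U}_1)-\overline{\GG}(x_2,\mathscr{U}_2)\bigr\|\;\leq\;\bigl(\Lip\GG\bigr)\,\|x_1-x_2\|\,,
\end{equation*}
using the same admissible $\mathscr{U}$ at both points. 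Without this step (or some equivalent regularity/measurability verification), invoking the inclusion-form PMP is unjustified; your alternative suggestion of an ``infinite-dimensional maximum principle'' for the control system of Remark~\ref{rmk-controlDimInf} is likewise only sketched, not an available off-the-shelf statement. Once the Lipschitz property of $\EE(\cdot)$ is supplied, the rest of your argument (adjoint inclusion collapsing to $\dot p=-\partial H/\partial\III$ and the maximum condition giving $\dot\III=\partial H/\partial p$, thanks to the $C^1$ regularity from Theorem~\ref{prop-HnormC1}) does close the proof in the same way the paper does.
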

\begin{proof}
Problem~$(\mathcal{P}_\varepsilon)$, $\varepsilon>0$ deals with a classical smooth control system; according to
\cite{Pont-Bol-Gam-M74,Agra-Sac:04}, the pseudo-Hamiltonian is $h(t,x,p,u)=\langle p , \mathcal{G}(t/\varepsilon,x)\, u
\rangle$; an extremal is a curve on the co-tangent bundle solution, in local coordinates, of:
 \begin{equation}
   \label{eq-ext-M}
   \begin{array}{lcl}
     \dot{p} & = &-\frac{\partial h}{\partial x}(t,x,p,u^*)\;=\; - \langle p , \frac{\partial\mathcal{G}}{\partial x} u^* \rangle ,\\
     \dot{x} & = & \frac{\partial h}{\partial p}(t,x,p,u^*)\;=\;  \mathcal{G}\, u^* , \\
   \end{array}
 \end{equation}
with $u^*(t)$ a control that maximizes the pseudo-Hamiltonian for almost all time; it is defined by
$u^*=\frac{\langle p,\mathcal{G}\rangle}{\|\langle p,\mathcal{G}\rangle\|}$ if $\langle p,\mathcal{G}(t/\varepsilon,x)\rangle\neq0$;
the maximized Hamiltonian $H_\varepsilon(t,p,x)=\max_u h(t,x,p,u)$ is the one in \eqref{eq-sys-maxH}, and
(\ref{eq-ext-M}) is then the differential equation
\eqref{eq-sys-maxH}, whose right-hand side is discontinuous at points where $\langle p,\mathcal{G}(t/\varepsilon,x)\rangle$ vanishes.

Let us now turn to % the average system, i.e. to the minimization problem
$(\mathcal{P}_0)$.
Since the set of admissible velocities is not a priori smooth with respect to the state variable we % have to
use a
non-smooth version of the Pontryagin maximum principle for differential inclusions, that we recall for self-containedness: 
\begin{center}
  \begin{minipage}[t]{0.93\linewidth} {\it
    \emph{Theorem~9.1 in \cite[Chapter 4]{Clar:98}:}\hspace{0.6ex} if $\dot{x}\in\EE(x)$ is a locally Lipschitz
    differential inclusion and $t \mapsto \overline{x}(t)$ is an absolutely continuous function defined on
    $[0,\overline{T}]$ solution to the problem~\eqref{eq-pb-TminMoy}, then there exists $t\mapsto \overline{p}(t)$
    defined on $[0,\overline{T}]$ such that
    $(-\dot{\overline{p}},\dot{\overline{x}}) \in \partial_C H(\overline{x},\overline{p})$ for almost all $t\in[0,\overline{T}]$
    with $H(x,p)=\max_{v\in\EE(x)}\langle p,v\rangle$ and $\partial_C H$ the generalized gradient of $H$. }
  \end{minipage}
\end{center}
\vspace{0.6ex}
The set-valued map $\EE(.)$ in \eqref{eq:calE} is indeed locally Lipschitz:
in local coordinates , for $x_1,x_2$ in
$\RR^n$, denoting by $\delta$ the Hausdorff distance between two sets, one has:
\begin{eqnarray*}
\delta\left(\EE(x_1),\EE(x_2)\right) 
& = & \max 
\left\{
 \sup_{v_1 \in \EE(x_1)} \inf_{v_2 \in \EE(x_2)} \|v_1-v_2\| ,
  \sup_{v_2 \in \EE(x_2)} \inf_{v_1 \in \EE(x_1)} \|v_1-v_2\|
 \right\} \\
 & = & 
\max_{\|\mathscr{U}_1\|_{\infty}\leq 1} \min_{\|\mathscr{U}_2\|_{\infty}\leq 1} \| \overline{\GG}(x_1,\mathscr{U}_1)- \overline{\GG}(x_2,\mathscr{U}_2)\| 
  \ \leq\  \Lip \GG \,\, \|x_1-x_2\| .
\end{eqnarray*}
According to (\ref{eq:EHbis}), the Hamiltonian $H$ defined in the above quoted theorem coincides with the map $H$ defined 
in (\ref{eq:Hamiltonian}).
%Whence the second part of Theorem \ref{th-ham}.
\hfill\end{proof}
\smallskip

%%%%%%%%%%%%%%%
% REMARK :
%%%%%%%%%%%%%%%

\refstepcounter{theorem}\paragraph{Remark~\thetheorem}\label{rmk-HamCom}
This result and Theorem~\ref{prop-limT} have two interpretations: 
\\1. They prove that the operations of
\emph{averaging} and \emph{computing the Hamiltonian for the minimum time problem} commute. Indeed, the Hamiltonian $H$
was obtained by applying the maximum principle to problem (\ref{eq-pb-TminMoy}), i.e. minimum time for the 
average system (\ref{eq-sysM}), but it also the average of the one in (\ref{eq-sys-maxH})
with respect to the fast variable.
\\2. They prove indirectly an averaging result for the minimum time control problem \eqref{eq-pb-TminMoy}; the
averaging techniques in \cite{Chap87} do not apply to minimum time for they require smoothness of the Hamiltonian,
while averaging is used in \cite{Gef-Epe:97,Geff97th} for minimum time with only partial theoretical justifications but
numerical evidence of efficiency.

%%%%%%%%%%%%%%%
% END OF REMARK
%%%%%%%%%%%%%%%

\medskip

%%%%%%%%%%%%%%%%%%%
% le machin en epsilon Log(epsilon)
%%%%%%%%%%%%%%%%%%%

Let us now focus on the
differential equations (\ref{eq-sys-maxHmoy}) that govern the extremals of $(\mathcal{P}_0)$.
It is of great importance to know whether it defines a Hamiltonian flow on $\mathrm{T}^*\vari$, \textit{i.e.} whether
solutions trough all initial conditions are unique or not.
Its right-hand side is continuous because, from Theorem~\ref{prop-HnormC1}, $H$ is continuously differentiable; this
ensures existence of solutions. We saw that $H$ is smooth ($C^\infty$) on
$\mathrm{T}^*\vari\setminus \widetilde{\mathcal{Z}}$ (see \eqref{eq-Ztilde}), hence solutions through points outside
$\widetilde{\mathcal{Z}}$ are always unique.
The following result % , a consequence of Theorem~\ref{th-ham-LipLog},
gives uniqueness of solutions even on $\widetilde{\mathcal{Z}}$ in the less degenerated case possible.

\begin{theorem}[Hamiltonian flow for $(\mathcal{P}_0)$]
\label{th-ham-flow}
Assume that the rank condition (\ref{eq:fullrank}) holds, as well as conditions (i) and (ii) in Theorem~\ref{th-ham-LipLog}.
Then the differential equation \eqref{eq-sys-maxHmoy} has a unique solution from any initial condition.
\end{theorem}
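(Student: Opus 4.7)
The plan is to combine the two regularity results already proved---Theorem~\ref{prop-HnormC1} for continuity and Theorem~\ref{th-ham-LipLog} for a logarithmic modulus of continuity---with the classical Osgood uniqueness theorem for ODEs whose right-hand side satisfies a non-Lipschitz modulus of continuity. Working in local coordinates on $\mathrm{T}^*\vari$ and writing \eqref{eq-sys-maxHmoy} as $\dot{z}=F(z)$ with $z=(x,p)$ and $F=(\partial H/\partial p,-\partial H/\partial x)$, one first notes that on $\{p\neq 0\}$ the map $H$ itself is continuously differentiable: indeed $H^2$ is $C^1$ by Theorem~\ref{prop-HnormC1} and $H>0$ there by Proposition~\ref{prop-Finsler}, so $H=\sqrt{H^2}$ is $C^1$. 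Thus $F$ is at least continuous on the domain of interest, and the Cauchy-Peano theorem already gives local existence.

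The argument for uniqueness then splits according to whether the initial condition $(\bar x,\bar p)$, $\bar p\neq 0$, belongs to $\widetilde{\mathcal{Z}}$ or not. If it does not, inspection of \eqref{eq:Hamiltonian} shows that $H$ is $C^\infty$ in a neighborhood (the integrand never vanishes), so $F$ is locally Lipschitz and uniqueness is immediate from Cauchy-Lipschitz. If it does, Theorem~\ref{th-ham-LipLog} provides, in local coordinates, a constant $c$ and a neighborhood on which
\[
\|F(X)-F(Y)\|\ \leq\ c\,\|X-Y\|\,\ln\frac{1}{\|X-Y\|}.
\]
Setting $\omega(s)=c\,s\ln(1/s)$, a primitive of $1/\omega$ is $-\ln\ln(1/s)/c$, which diverges at $0$; hence $\omega$ is an admissible Osgood modulus of continuity, i.e.\ $\int_0^{\delta}\mathrm{d}s/\omega(s)=+\infty$. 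Given two solutions $z_1,z_2$ on a small interval $[0,\tau]$ with $z_1(0)=z_2(0)=(\bar x,\bar p)$, both remaining in the chosen neighborhood by continuity, the function $\phi(t)=\|z_1(t)-z_2(t)\|$ satisfies
\[
\phi(t)\ \leq\ \int_0^t\omega(\phi(s))\,\mathrm{d}s,
\]
and Osgood's comparison lemma forces $\phi\equiv 0$ on $[0,\tau]$. A standard connectedness argument---the coincidence set of two solutions is closed by continuity and open by re-applying either of the two local arguments at every coincidence point, depending on whether the common value sits in $\widetilde{\mathcal{Z}}$ or not---then extends uniqueness to the full interval of existence.

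The main obstacle has in fact been absorbed into Theorem~\ref{th-ham-LipLog}: the Log-Lipschitz modulus of continuity of $\mathrm{d}H$ at points of $\widetilde{\mathcal{Z}}$ is borderline for Osgood's criterion. Any weaker regularity such as H\"older continuity would admit classical non-uniqueness examples (e.g.\ $\dot z=|z|^{1-\alpha}$), whereas $\omega(s)=cs\ln(1/s)$ lies precisely on the uniqueness side of the dichotomy. Once the estimate \eqref{eq:Lip-Log} is at hand, the theorem is therefore nothing more than a direct application of this classical ODE comparison lemma.
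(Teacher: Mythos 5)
Your proposal is correct and follows essentially the same route as the paper: the paper also deduces uniqueness from the log-Lipschitz estimate \eqref{eq:Lip-Log} of Theorem~\ref{th-ham-LipLog} via the classical ODE criterion $\int_0^\alpha \mathrm{d}u/\omega(u)=+\infty$ with $\omega(u)=c\,u\ln(1/u)$, citing Kamke's uniqueness theorem in Hartman's book (equivalent to the Osgood comparison lemma you invoke). Your additional case split (point in $\widetilde{\mathcal{Z}}$ or not) and the connectedness argument are harmless elaborations of the same idea.
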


\begin{proof}
For an autonomous ODE $\dot{z}=f(z)$ in a finite dimensional space,
where $f$ satisfies
$\|f(z_1)-f(z_2)\|\leq\omega(\|z_1-z_2\|)$ with $\omega\!:[0,+\infty)\!\to\!
[0,+\infty)$ non-decreasing, Kamke uniqueness Theorem \cite[chap. \!III, Th. \!6.1]{Hart:82} states that uniqueness of solutions holds if
$\int_0^\alpha\!\frac{\mathrm{d}u}{\omega(u)}\!=\!+\infty$ for arbitrarily small $\alpha>0$.
From Theorem~\ref{th-ham-LipLog}, we are in this case with $\omega(u)=c\,u\ln(1/u)$, and $\int\frac{\mathrm{d}u}{\omega(u)}=-\frac1c\,\ln\ln(1/u)$. 
\hfill\end{proof}

\smallskip

Proving existence of a flow for \eqref{eq-sys-maxHmoy} in more general situations (weaker sufficient condition) than
this theorem is an interesting program to
be pursued. However, it turns out to be applicable to
the control of orbit transfer with low thrust, see~\S\ref{sec-2body}.

Point (ii) is very mild and only states that the control vector fields are linearly independent. 
Point (i) is more
artificial: the fact that  $\langle p , \GG(\theta,x) \rangle=0$ has at most one solution $\theta$ has to be checked by
hand, while the fact that $\partial\GG/\partial\theta$ does not vanish at the same time is equivalent to the
rank condition\vspace{-1ex}
\begin{equation}
  \label{eq:rank-2}
  \rank
\left\{\GG(\theta,\III),\frac{\partial\GG}{\partial \theta}(\theta,\III)\right\}
=\dim\Bigl(\Range\GG(\theta,x)+\Range\frac{\partial\GG}{\partial \theta}(\theta,x)\Bigr)=n\ .
\vspace{-1ex}
\end{equation}
It is true for the Kepler problem and used in \cite{Cail-Noa01} to show that the discontinuities in \eqref{eq-sys-maxH} are
always ``$\pi$-singularities'', i.e. the control $u^*$ switches to its opposite.

\section{Kepler control systems}
\label{sec-kep}

We call \emph{Kepler control system} with small control a family of control system on $S^1\times\vari$
of the form
\begin{equation}
  \label{eq:Kep}
(\mathcal{K}_\varepsilon)\ \ \left\{
  \begin{array}{rcl}
    \dot{\theta}&=&\pulse(\theta,x)+g(\theta,x)\,v
    \\
    \dot{x}&=&G(\theta,x)\,v
  \end{array}\right.
\ ,\ \ \ \|v\|\leq\varepsilon\ ,
\end{equation}
where $G$ and $g$ can be viewed, with the same convention is in \eqref{eq-sysO}, as $n\times m$ and $1\times m$ matrices
smoothly depending on $(\theta,x)$ and $\pulse$ is a smooth function $S^1\times\vari\to\RR$
that remains larger than a strictly positive constant:\vspace{-1.5ex}
\begin{equation}
  \label{eq:pulsemin}
  \pulse(\theta,x)\geq k_{\pulse}>0\ \ \ \forall(\theta,x)\in S^1\times\vari\ .
\vspace{-1.5ex}
\end{equation}
In fact, this is an affine control system on $S^1\times\vari$\vspace{-1ex}
\begin{equation}
  \label{eq:Kep-champs}
  \dot{\xi}=f_0(\xi)+\sum_{i=1}^{m}v_i f_i(\xi)
\vspace{-1ex}
\end{equation}
with $\xi=(\theta,x)$, $f_0=\pulse\,\frac\partial{\partial\theta}$ and, for $1\leq i\leq m$, the smooth vector field
$f_i$ is represented by the $i$\textsuperscript{th} column of the matrix notations $G$ and $g$. If,
in \eqref{eq:Kep-champs}, one only assumes that, all solutions of $\dot{\xi}=f_0(\xi)$ are periodic, 
additional conditions are needed for the orbits
to induce a nice foliation that splits the state manifold into a product $\vari\times S^1$.

\subsection{Relation with fast oscillating systems}
\label{sec-kep-trsf}
For a solution $t\!\mapsto\!(\theta(t),\III(t))$ of $(\mathcal{K}_\varepsilon)$ in \eqref{eq:Kep},
let $\Theta(t)$ be the cumulated angle \textit{i.e.} $\Theta(.)$ is
continuous $[0,T]\to\RR$ with $\Theta(t)\!\equiv\!\theta(t)$ mod  $2\pi$ for all $t$ and 
$\Theta(0)\in[0,2\pi)$, and define a new ``time''
\begin{equation}
  \label{eq:13}
  \epsthet=\mathscr{R}(t)\stackrel\Delta=\varepsilon\left(\Theta(t)-\Theta(0)\right)\,.
\end{equation}
Taking $\varepsilon_0$ small enough so that $|\pulse (\theta,x)+\varepsilon\,g(\theta,x)\,u|>k_\pulse/2$ for $x\in\Set{K}$,
$\|u\|\leq1$, $\varepsilon<\varepsilon_0$, one has $\mathrm{d}\mathscr{R}/\mathrm{d}t>\varepsilon\,k_\pulse/2$ hence
$\mathscr{R}$ is strictly increasing and one-to-one, and 
\begin{equation}
  \label{eq:22}
  \frac{k_\pulse}{2}\varepsilon\,t\leq\mathscr{R}(t)\leq k_\pulse\,\varepsilon\,t
\ \ \text{with } k_\pulse =\sup_{S^1\times\Set{K}}\pulse+\varepsilon_0 \sup_{S^1\times\Set{K}}\|g\|
\,.
\end{equation}

Then $\lambda\mapsto \widetilde{x}(\epsthet)\!=\!x(\mathscr{R}^{-1}(\epsthet))$ is a solution of the system
\begin{equation}
  \label{eq:001}
  (\widetilde{\Sigma}_{\theta_0,\varepsilon})\ \ \ \ 
  \frac{\mathrm{d}\widetilde{x}}{\mathrm{d}\epsthet}=
  \frac{G (\theta_0+\frac\epsthet\varepsilon,\widetilde{x})\,\widehat{u}} 
  {\pulse (\theta_0+\frac\epsthet\varepsilon,\widetilde{x})\;+\;\varepsilon\,g(\theta_0+\frac\epsthet\varepsilon,\widetilde{x})\,\widehat{u}}
  \ ,\ \ \ \|\widehat{u}\|\leq 1\ ,
\end{equation}
associated with the control $\epsthet\mapsto\widehat{u}(\epsthet)
\!={v(\mathscr{R}^{-1}(\epsthet))}/{\varepsilon}$.
Except for the term $\varepsilon g \widehat{u}$ in the denominator, this is a
fast oscillating system \eqref{eq-sysO} with $\GG=G/\pulse$.
We now apply \S\ref{sec-fast}.

\subsection{Average control system}
\label{sec-kep-aver}
The definition uses $\overline{\pulse}$ defined by
\begin{equation}
  \label{eq:20}
  \frac1{\overline{\pulse}(x)}=
\frac1{2\pi}\int_0^{2\pi}\frac{\mathrm{d}\theta}{\pulse(\theta,x)}\ .
\end{equation}
\begin{definition}[Average control system of Kepler control systems]
  \label{def-aver-Kep}
The average control system of the Kepler control system (\ref{eq:Kep}) is the differential inclusion
\begin{equation}
  \label{eq:21}
  \dot\III\in \EE(x)
\end{equation}
with $\EE$ defined by (\ref{eq:calE}) using 
$\overline{\GG}:\vari\times L^\infty(S^1,\RR^m)\to\mathrm{T}\vari$ defined by 
\begin{equation}
  \label{eq:defGbarKep}
  \overline{\GG}(x,\mathscr{U}) \;=\;
\overline{\pulse}(x)\,
\frac{1}{2\pi}\!\!
\int_0^{2\pi}\!\frac{G(\theta,x)}{\pulse(\theta,x)}\,\mathscr{U}(\theta)\mathrm{d}\theta
\end{equation}
instead of \eqref{eq:defGbar}.
Solutions are defined as in Definition~\ref{def-aver}.
\end{definition}

%%%%%%%%%%%%%%%
% REMARK :
%%%%%%%%%%%%%%%

\refstepcounter{theorem}\paragraph{Remark~\thetheorem}\label{rmk-KepFast}
This is almost Definition~\ref{def-aver} applied to \eqref{eq:001}, which is equivalent to \eqref{eq:Kep}
via time changes, \emph{except}:
\begin{romannum}
\item the term $\varepsilon g \widehat{u}$ in the denominator of \eqref{eq:001} has been discarded,
\item the right-hand side has been multiplied by $\overline{\pulse}(x)$. % so that the ``time'' of the average system is
  % equivalent, as $\varepsilon\to0$, to the ``time'' of the control system (\ref{eq:Kep}) multiplied by $\varepsilon$.
\end{romannum}

%%%%%%%%%%%%%%%
% END OF REMARK
%%%%%%%%%%%%%%%

\subsection{Convergence Theorem} The counterpart of Theorem~\ref{th-conv-sysM} is:

\begin{theorem}[Convergence for Kepler control systems]
\label{th-conv-sysM-Kep}

\textup{1.}
  Let $\III_0(.):[0,T]\to\vari$ be an arbitrary solution of~\eqref{eq:21} and $\theta^0\in S^1$.
  There exist a family of measurable functions $\overline{u}_\varepsilon(.):[0,T]\to B^m$, indexed by $\varepsilon>0$,
  and positive constants $c,\varepsilon_0$, such that, if $t\mapsto(\theta_\varepsilon(t),\III_{\varepsilon}(t))$ is the solution of \eqref{eq:Kep}
   with control $u=\overline{u}_\varepsilon(t)$ and initial condition
   $(\theta_\varepsilon(0),\III_{\varepsilon}(0))=(\theta^0,\III_0(0))$,  it is defined on $[0,T/\varepsilon]$
   for $\varepsilon$ smaller than $\varepsilon_0$ and
\begin{equation}
  \label{eq:CVkep}
  \mathit{d}(\,\III_{\varepsilon}(t)\,,\,\III_0(\varepsilon t)\,)<c\,\varepsilon\,,\ \ t\in[0,\frac T\varepsilon]\,,\ \ 
0<\varepsilon<\varepsilon_0\,,
\end{equation}
thus
$\tau\mapsto\III_{\varepsilon}(\tau/\varepsilon)$ converges uniformly on $[0,T]$ to $\tau\mapsto\III_0(\tau)$ when $\varepsilon$ tends to zero.

\textup{2.}
    Let $\Set{K}$ be a compact subset of $\vari$, $(\varepsilon_n)_{n\in\NN}$ a decreasing sequence of positive real numbers converging to zero,
and $\bigl(\theta_n(.),\III_{n}(.)\bigr):\,[0,T/\varepsilon_n]\to S^1\times\Set{K}$ 
a solution of system~(\ref{eq:Kep}) for each $n$, with $\varepsilon=\varepsilon_n$ and some control
$u=u_n(t)$, $u_n(.)\in L^\infty([0,T/\varepsilon_n],\RR^m)$, $\|u_n(.)\|_\infty\leq1$.
Then the sequence $\bigl(\tKep\mapsto(\III_{n}(\tKep/\varepsilon_n))\bigr)_{n\in\NN}$ is compact for the topology of uniform convergence on
$[0,T]$ and the limit of any converging sub-sequence is a solution $\III^*(.)$ of the average differential inclusion
\eqref{eq:21}.
\end{theorem}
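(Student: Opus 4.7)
The plan is to reduce Theorem~\ref{th-conv-sysM-Kep} to its fast-oscillating counterpart Theorem~\ref{th-conv-sysM} via the time change $\epsthet = \mathscr{R}(t)$ from \S\ref{sec-kep-trsf}, which converts $(\mathcal{K}_\varepsilon)$ into $(\widetilde{\Sigma}_{\theta_0,\varepsilon})$ of \eqref{eq:001}. Expanding the right-hand side of \eqref{eq:001} to first order in $\varepsilon$ using the lower bound \eqref{eq:pulsemin} on $\pulse$, one obtains
\begin{equation*}
\frac{d\widetilde{\III}}{d\epsthet} \;=\; \GG(\theta_0+\epsthet/\varepsilon,\widetilde{\III})\,\widehat{u} \;+\; \varepsilon\,R(\theta_0+\epsthet/\varepsilon,\widetilde{\III},\widehat{u})
\end{equation*}
with $\GG = G/\pulse$ and $R$ smooth and bounded on compacts. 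Theorem~\ref{th-conv-sysM} applies to this fast-oscillating system after absorbing the bounded $O(\varepsilon)$ drift $\varepsilon R$ by a straightforward extension in the spirit of Remark~\ref{rmk-thconv}; the associated averaged inclusion is $d\widetilde{\III}/d\epsthet \in \widetilde{\EE}(\widetilde{\III})$ with $\widetilde{\EE}(\widetilde{\III}) = \{\tfrac{1}{2\pi}\int_0^{2\pi}\frac{G(\theta,\widetilde{\III})}{\pulse(\theta,\widetilde{\III})}\mathscr{U}(\theta)\,d\theta,\,\|\mathscr{U}\|_\infty\!\leq\!1\}$.

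In view of \eqref{eq:defGbarKep}, one has $\EE(\III) = \overline{\pulse}(\III)\cdot\widetilde{\EE}(\III)$, so that the state-dependent time change $d\tKep = d\epsthet/\overline{\pulse}$ maps solutions of $d\widetilde{\III}/d\epsthet\in\widetilde{\EE}(\widetilde{\III})$ bijectively to solutions of \eqref{eq:21}. The match with the $\varepsilon$-scaled original time is provided by the harmonic-mean definition \eqref{eq:20} of $\overline{\pulse}$: since $dt/d\epsthet = 1/(\varepsilon\pulse) + O(\varepsilon)$ by \eqref{eq:001}, classical averaging for scalar integrands over the fast angle yields
\begin{equation*}
\varepsilon t \;=\; \int_0^{\mathscr{R}(t)} \frac{d\epsthet'}{\pulse(\theta_0+\epsthet'/\varepsilon,\,\widetilde{\III}_\varepsilon(\epsthet'))} + O(\varepsilon) \;=\; \int_0^{\mathscr{R}(t)} \frac{d\epsthet'}{\overline{\pulse}(\widetilde{\III}_0(\epsthet'))} + O(\varepsilon)\,,
\end{equation*}
so that $\varepsilon t$ agrees up to $O(\varepsilon)$ with the $\tKep$-parameter of the corresponding averaged trajectory $\III_0$.

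Each of the two points of the theorem then follows the template of Theorem~\ref{th-conv-sysM}. For Point~1, given $\III_0:[0,T]\to\vari$ solving \eqref{eq:21}, convert it to $\widetilde{\III}_0$ on the matching $\epsthet$-interval, apply Theorem~\ref{th-conv-sysM}~\ref{it-thconv-a} to produce controls $\widehat{u}_\varepsilon(\epsthet)$ and solutions $\widetilde{\III}_\varepsilon$ of the perturbed fast-oscillating system with $d(\widetilde{\III}_\varepsilon,\widetilde{\III}_0)<c\,\varepsilon$, and transfer back via $\III_\varepsilon(t) = \widetilde{\III}_\varepsilon(\mathscr{R}(t))$ and the corresponding pull-back of controls. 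The estimate \eqref{eq:CVkep} then follows from
\begin{equation*}
d(\III_\varepsilon(t),\III_0(\varepsilon t)) \;\leq\; d(\widetilde{\III}_\varepsilon(\mathscr{R}(t)),\widetilde{\III}_0(\mathscr{R}(t))) \;+\; d(\widetilde{\III}_0(\mathscr{R}(t)),\III_0(\varepsilon t))\,,
\end{equation*}
the first term being bounded via Theorem~\ref{th-conv-sysM} and the second by Lipschitz continuity of $\III_0$ combined with the time-matching estimate above. For Point~2, the bound $\|\dot{\III}_n\| \leq \varepsilon_n\|G\|$ makes $\tKep\mapsto\III_n(\tKep/\varepsilon_n)$ uniformly Lipschitz on $[0,T]$, so Ascoli--Arzel\`a yields compactness; each $\III_n$ reparametrised by $\epsthet$ lives on an $\epsthet$-interval of length bounded below and above by \eqref{eq:22}, and Theorem~\ref{th-conv-sysM}~\ref{it-thconv-b} (with the $O(\varepsilon_n)$ perturbation absorbed as above) identifies any accumulation point as a solution of $d\widetilde{\III}/d\epsthet\in\widetilde{\EE}(\widetilde{\III})$, which time-changes to a solution of \eqref{eq:21}.

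The main technical obstacle is the careful bookkeeping of the two state-dependent time changes — $\mathscr{R}$ on the oscillating side and $d\tKep = d\epsthet/\overline{\pulse}$ on the averaged side — which agree only asymptotically as $\varepsilon\to0$. Controlling their discrepancy uniformly by $O(\varepsilon)$ is where the harmonic-mean choice \eqref{eq:20} of $\overline{\pulse}$ plays its decisive role, being precisely the quantity produced by averaging $1/\pulse$. The denominator perturbation $\varepsilon g\widehat{u}$ in \eqref{eq:001} is a comparatively minor, secondary issue contributing only a bounded $O(\varepsilon)$ drift that can be absorbed by the same Gronwall-type argument used in the proof of Theorem~\ref{th-conv-sysM}.
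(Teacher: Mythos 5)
Your proposal is correct and follows essentially the same route as the paper's proof: time-change to the fast-oscillating system \eqref{eq:001}, discard the $\varepsilon g\widehat{u}$ denominator term by a Gronwall/smooth-dependence comparison with \eqref{eq:001bb}, apply Theorem~\ref{th-conv-sysM} to that system (whose average is \eqref{eq:24}, i.e.\ your $\widetilde{\EE}=\EE/\overline{\pulse}$), and reconcile the two parametrizations through the harmonic-mean definition \eqref{eq:20} of $\overline{\pulse}$, exactly the role of the maps $\mathscr{P}$, $\mathfrak{T}$, $\mathscr{Q}$ and the bound on $|\mathfrak{T}\circ\mathscr{P}(\tKep)-\tKep/\varepsilon|$ in the paper. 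The only cosmetic difference is that you convert the $O(\varepsilon)$ time mismatch into a state estimate via the Lipschitz constant of $\III_0$ in $\tKep$-time, whereas the paper uses the $O(\varepsilon)$ Lipschitz constant of $\III_\varepsilon$ in $t$-time; the two are interchangeable.
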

\begin{proof}
We assume that $\vari$ is $\RR^n$, $d$ the Euclidean distance and all vector fields have a common compact support,
hence all maps share a global Lipschitz constant and a global bound; by ``a constant'', we mean a number that
depends only on these bounds and Lipschitz constants.
It is left to the reader to check that, as for the proof of Theorem~\ref{th-conv-sysM}, the present proof extends to $\vari$ with any distance $d$ described in
\S\ref{sec-not-d}. 

Let $\tKep\mapsto\III_0(\tKep)$ be a solution of~\eqref{eq:21} on $[0,T]$.  
Define $\mathscr{P}(.)$ by
\begin{equation}
  \label{eq:18}
  \mathscr{P}(\tKep) = 
  \int_0^\tKep \overline{\pulse}\left(\III_0(\mathfrak{t})\right) \mathrm{d}\mathfrak{t}
\end{equation}
and $\widehat{x}_0(.)$ by 
$\widehat{x}_0(\epsthet)= x_0(\mathscr{P}^{-1}(\epsthet))$. The latter 
is a solution on $[0,\mathscr{P}(T)]$ of
\begin{equation}
  \label{eq:24}
  \frac{\mathrm{d}\widehat{x}_0}{\mathrm{d}\epsthet}\in \frac1{\overline{\pulse}(\widehat{x}_0)}\mathscr{E}(\widehat{x}_0)
\end{equation}
with $\mathscr{E}$ defined by \eqref{eq:calE} and \eqref{eq:defGbarKep}.
This is the average system (in the sense of Definition \ref{def-aver}) of the fast oscillating control system
\begin{equation}
  \label{eq:001bb}
  (\widehat{\Sigma}_{\theta_0,\varepsilon})\ \ \ \ 
  \frac{\mathrm{d}\widehat{x}}{\mathrm{d}\epsthet}=
  \frac{G (\theta_0+\frac\epsthet\varepsilon,\widehat{x})\,\widehat{u}} {\pulse
    (\theta_0+\frac\epsthet\varepsilon,\widehat{x})}
  \ ,\ \ \ \|\widehat{u}\|\leq 1\;.
\end{equation}Theorem~\ref{th-conv-sysM} (Point 1) yields a family of controls $\widehat{u}_\varepsilon$ such that the solutions
$\widehat{x}_\varepsilon(.)$ of $(\widehat{\Sigma}_{\theta_0,\varepsilon})$ with initial condition
$\widehat{x}_0(0)$ and control $\widehat{u}_\varepsilon$ converge to $\widehat{x}_0(.)$ uniformly:
\begin{equation}
  \label{eq:14}
  d(\widehat{x}_\varepsilon (\epsthet),\widehat{x}_0 (\epsthet))\leq c'\,\varepsilon\ \ \text{for all } \epsthet \in[0,\mathscr{P}(T)]
\end{equation}
for some constant $c'$. For each $\varepsilon$, let
$\widetilde{x}_\varepsilon(.)$ be the solution of $(\widetilde{\Sigma}_{\theta_0,\varepsilon})$ ---~see \eqref{eq:001}~--- with
same initial condition and same control.
Since \eqref{eq:001} can be re-written as
\begin{equation}
  \label{eq:1}
  \frac{\mathrm{d}\widetilde{x}}{\mathrm{d}\epsthet}=\Bigl(1- \frac{\varepsilon\,
     g(\theta_0+\frac\epsthet\varepsilon,\widetilde{x})\,\widehat{u}\;
     }{
     \pulse(\theta_0+\frac\epsthet\varepsilon,\widetilde{x})+\varepsilon \,g(\theta_0+\frac\epsthet\varepsilon,\widetilde{x})\,\widehat{u}
     }\Bigr)
\frac{G (\theta_0+\frac\epsthet\varepsilon,\widetilde{x})\,\widehat{u}} {\pulse
  (\theta_0+\frac\epsthet\varepsilon,\widetilde{x})}\,,
\end{equation}
the norm of the difference between the right-hand sides of $(\widetilde{\Sigma}_{\theta_0,\varepsilon})$ and $(\widehat{\Sigma}_{\theta_0,\varepsilon})$
is bounded by $k\,\varepsilon$ for some constant $k>0$; 
classical theorems on
smooth dependence of solutions on ``parameters'' yield some constant $c''$ such that
\begin{equation}
  \label{eq:26}
  d(\widetilde{x}_\varepsilon (\epsthet),\widehat{x}_\varepsilon (\epsthet))\leq c''\,\varepsilon\ \ \text{for all } \epsthet \in[0,\mathscr{P}(T)]\,.
\end{equation}
Then define
\begin{equation}
  \label{eq:17}
  t=\mathfrak{T}(\epsthet)=
  \frac1\varepsilon
  \int_0^\epsthet
  \frac{
    \mathrm{d}\ell
  }{
    \pulse(\theta_0+\frac\ell\varepsilon,\widetilde{x}_\varepsilon(\ell)) + \varepsilon \,g(\theta_0+\frac\ell\varepsilon,\widetilde{x}_\varepsilon(\ell))\,\widehat{u}_\varepsilon(\ell)
  }
\end{equation}
and the controls $t\mapsto \overline{u}_\varepsilon(t)$ by
$\widehat{u}_\varepsilon(\epsthet)=\overline{u}_\varepsilon(\mathfrak{T}(\epsthet))$; the solutions $x_\varepsilon(.)$
of \eqref{eq:Kep} with these controls are given by
$\widetilde{x}_\varepsilon(\epsthet)={x}_\varepsilon(\mathfrak{T}(\epsthet))$, and one therefore has
\begin{equation}
  \label{eq:27}
  d(\III_\varepsilon(\mathfrak{T}\!\circ\!\mathscr{P}(\tKep)),x_0(\tKep))<(c'+c'')\varepsilon,\ \ \tKep\in[0,T]\,.
\end{equation}

Now, on the one hand, \eqref{eq:17} yields
\begin{align}
  \label{eq:6}
  \mathfrak{T}(\mathscr{P}(\tKep))=&\;
  \frac1\varepsilon
  \int_0^{\mathscr{P}(\tKep)}
  \frac{
    \mathrm{d}\ell
  }{
    \pulse(\theta_0+\frac\ell\varepsilon,\widehat{x}_\varepsilon(\ell)) 
  }
+\rho
\\\nonumber
  \mbox{with}\ \ \ \rho=&\;\frac1\varepsilon\int_0^{\mathscr{P}(\tKep)}
\left(
  \frac{
    \widehat{\pulse}(\ell) -\widetilde{\pulse}(\ell)
  }{
    \widehat{\pulse}(\ell)
  }
+
  \frac{
    \varepsilon \,g(\ell)\,\widehat{u}_\varepsilon(\ell)
  }{
    \widetilde{\pulse}(\ell) + \varepsilon \,g(\ell)\,\widehat{u}_\varepsilon(\ell)
  }
\right)
\frac{\mathrm{d}\ell}{\widetilde{\pulse}(\ell) }
\end{align}
where $\widetilde{\pulse}(\ell)$ stands for $\pulse(\theta_0+\frac\ell\varepsilon,\widetilde{x}_\varepsilon(\ell))$,
$\widehat{\pulse}(\ell)$ stands for
$\pulse(\theta_0+\frac\ell\varepsilon,\widehat{x}_\varepsilon(\ell))$, and $g(\ell)$ stands for
$g(\theta_0+\frac\ell\varepsilon,\widetilde{x}_\varepsilon(\ell))$;
using \eqref{eq:26} and Lipschitz continuity of $\pulse$ to bound the first term in the integral, this implies that
$|\rho|$ is bounded by a constant.
On the other hand, one has, according to \eqref{eq:18},
$\tau
=
\int_0^{\mathscr{P}(\tKep)} \!\!
\mathrm{d}\epsthet  /  \overline{\pulse}(\widehat{x}_0(\epsthet))$. Developing $\overline{\pulse}$ according to its
definition \eqref{eq:20}, in which we add $\theta_0\!+\!\frac{\epsthet}{\varepsilon}$ to $\theta$ without changing the
integral due to periodicity,
$\tau$ is also equal to
$\frac1{2\pi}
\iint_{\theta\in S^1,\;0\leq \epsthet\leq \mathscr{P}(\tKep)}
\mathrm{d}\epsthet\,\mathrm{d}\theta  /
\pulse(\theta_0\!+\!\frac{\epsthet}{\varepsilon}+\theta,\widehat{x}_0(\epsthet))$.
Finally, performing the change of variable $\lambda=\ell-\varepsilon \mu(\theta)$, with
$\mu(\theta)$ defined in \S\ref{sec-not-S1}, yields
\begin{equation*}
  \tau=
\int_{\varepsilon\mu(\theta)}^{\mathscr{P}(\tKep)+\varepsilon\mu(\theta)}
\left(
\frac1{2\pi}
\int_{0}^{2\pi}
\frac{\mathrm{d}\theta}{\pulse(\theta_0\!+\!\frac{\ell}{\varepsilon},\widehat{x}_0(\ell-\varepsilon\mu(\theta)))}\right) \mathrm{d}\ell
\end{equation*}
Using \eqref{eq:14}, the fact that $|\mu(\theta)|<2\pi$ and Lipschitz continuity of both $\widehat{x}_0$ and $\pulse$,
we deduce from this and equation \eqref{eq:6} that 
$\left|\mathfrak{T}\!\circ\!\mathscr{P}(\tKep)-\frac\tau\varepsilon\right|\leq|\rho|+k'$ for some constant $k'$ and
finally, using the fact that $\III_\varepsilon(.)$ is Lipschitz continuous with constant $2\,\varepsilon\sup\|G\|/k_\pulse$, one has
$d(\III_\varepsilon(\mathfrak{T}\circ\mathscr{P}(\tKep)),\III_\varepsilon(\frac\tKep\varepsilon))<c'''\varepsilon$ for
some constant $c'''$. This and equation \eqref{eq:27} imply
implies point 1 of the theorem, with $c=c'\!+\!c''\!+\!c'''$ in \eqref{eq:CVkep}.

For point 2, consider $\bigl(\theta_n(.),\III_{n}(.)\bigr):\,[0,T/\varepsilon_n]\to S^1\times\Set{K}$ 
a solution of system~(\ref{eq:Kep}) with $\varepsilon=\varepsilon_n$ and some control
$u=u_n(t)$. 
Following \eqref{eq:13}--\eqref{eq:001} and setting $\epsthet=\mathscr{R}_n(t)$ (we write
$\mathscr{R}_n$ because $\mathscr{R} $ in \eqref{eq:13} is constructed for system
$(\widetilde{\Sigma}_{\theta_0,\varepsilon_n})$ and thus depends on $n$),
one associates to these $x$ and $u$ a control $\epsthet\mapsto \widetilde{u}_n(\epsthet)$ and a solution $\epsthet\mapsto \widetilde{\III}_{n}(\epsthet)$ of $(\widetilde{\Sigma}_{\theta_0,\varepsilon_n})$. The solutions
$\epsthet\mapsto \widehat{\III}_{n}(\epsthet)$ of $(\widehat{\Sigma}_{\theta_0,\varepsilon_n})$ with same control and
initial condition satisfy, for the same reasons as \eqref{eq:26},
$d(\widehat{\III}_{n}(\epsthet),\widetilde{\III}_{n}(\epsthet))<c''\varepsilon_n$ for some constant $c''$. By Theorem~\ref{th-conv-sysM}
(Point 2), the sequence $(\widehat{x}_n)$ is compact and subsequences converge to solutions $\epsthet\mapsto\widehat{x}_0(\epsthet)$
of \eqref{eq:24},
hence the same subsequences of $(\widetilde{x}_n)$ converge as well, and, with
$  \tKep=\mathscr{Q}(\epsthet)\stackrel\Delta=\int_0^\epsthet
\frac{\mathrm{d}\ell}{\overline{\pulse}\left(\widehat{x}(\ell)\right)}$,
the maps $\tau\mapsto \widetilde{x}_n(\mathscr{Q}^{-1}(\tKep))=x_n((\mathscr{Q}\circ\mathscr{R}_n)^{-1}(\tKep))$
converge to a solution $\tKep\mapsto \III_0(\tKep)=\widehat{x}_0(\mathscr{Q}^{-1}(\tKep))$ of the average system
\eqref{eq:21}, with distance less than $c'\varepsilon_n$ for some constant $c'$.  
Using the same argument as in Point~1 for $\mathfrak{T}\!\circ\!\mathscr{P}(\tKep)$, one gets a bound for 
$|(\mathscr{Q}\!\circ\!\mathscr{R}_n)^{-1}(\tKep)-\frac\tKep {\varepsilon_n}|$ and, for some constant $c'''$, \\
$d(\,x_n((\mathscr{Q}\circ\mathscr{R}_n)^{-1}(\tKep))\,,\,x_n(\frac\tKep\varepsilon))\leq c'''\varepsilon_n$. Point 2 is proved.
\qquad\end{proof}

\subsection{Dimension of $\EE(x)$}
In \S\ref{sec-propav}, and in particular in Proposition~\ref{lem-dim}, $\GG$ can simply be replaced with $G$. It is however interesting to give a more intrinsic
characterization of $r(\theta,x)$ and thus of $\dim\EE(x)$.
\begin{proposition}
  \label{prop-intrinsic-Kep} If \eqref{eq:Kep} and \eqref{eq:Kep-champs} represent the same control system, then%, with $\xi=(\theta,x)$, one has
  \begin{align}
    \nonumber
    \dim\Bigl(\sum_{j\in\NN}&\Range\frac{\partial^j G}{\partial \theta^j}(\theta,x)\Bigr)
\\[-1ex]
    \label{eq:2}
&=-1+\rank\Bigl(\{f_0(\theta,x)\}\cup\left\{\mathrm{ad}_{f_0}^j f_k\,(\theta,x)\;,\ j\in\NN, 1\leq k\leq
    m\right\}\Bigr)  \;.
  \end{align}
\end{proposition}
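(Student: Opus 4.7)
The plan is to compute the iterated brackets $\mathrm{ad}_{f_0}^{j}f_k$ explicitly in the coordinates $(\theta,x)$ on $S^1\times\vari$ and to exploit the splitting $T_{(\theta,x)}(S^1\!\times\!\vari)=\mathbb{R}\,\partial_\theta\oplus T_x\vari$ that is already used to write \eqref{eq:Kep}. In those coordinates, $f_0=\pulse(\theta,x)\,\partial_\theta$ has no $\partial_x$ component, while $f_k=g_k(\theta,x)\,\partial_\theta+G_k(\theta,x)\,\partial_x$ for $1\leq k\leq m$, where $G_k$ is the $k$\textsuperscript{th} column of $G$. The key observation is that if a vector field $V(\theta,x)\,\partial_x$ is purely ``horizontal'', then
\[
[f_0,V\partial_x]=\pulse\,(\partial_\theta V)\,\partial_x\;-\;(V\cdot\partial_x\pulse)\,\partial_\theta,
\]
so the $\partial_x$-component of $[f_0,V\partial_x]$ is $\pulse\,\partial_\theta V$. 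Since $f_0$ itself has no $\partial_x$ part, the same formula applied inductively shows that writing $\mathrm{ad}_{f_0}^{j}f_k=\alpha_j(\theta,x)\,\partial_\theta+V_j(\theta,x)\,\partial_x$ yields $V_0=G_k$ and $V_{j+1}=\pulse\,\partial_\theta V_j$.

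From this recursion, an immediate induction gives $V_j=\pulse^{\,j}\,\partial_\theta^{\,j}G_k\,+\,\sum_{i<j}\phi_{j,i}(\theta,x)\,\partial_\theta^{\,i}G_k$ for smooth scalar coefficients $\phi_{j,i}$ depending on $\pulse$ and its $\theta$-derivatives. Because $\pulse(\theta,x)>0$ (see \eqref{eq:pulsemin}), this is a triangular (invertible) linear combination, so at any fixed $(\theta,x)$:
\[
\mathrm{Span}\{V_j(\theta,x):\,j\geq0,\ 1\leq k\leq m\}\;=\;\sum_{j\in\NN}\Range\frac{\partial^{j}G}{\partial\theta^{j}}(\theta,x).
\]

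To conclude, I would compute the dimension of the span in the left-hand side of \eqref{eq:2} using the splitting. Since $f_0(\theta,x)=(\pulse(\theta,x),0)\neq0$ belongs to the pure $\partial_\theta$ line and each $\mathrm{ad}_{f_0}^{j}f_k=(\alpha_j,V_j)$ has a $\partial_x$-component $V_j$, one may subtract multiples of $f_0$ to eliminate the $\partial_\theta$-part of each bracket, so
\[
\mathrm{Span}\bigl\{f_0,\mathrm{ad}_{f_0}^{j}f_k\bigr\}\;=\;\mathbb{R}\,f_0\;\oplus\;\bigl(\{0\}\times\mathrm{Span}\{V_j(\theta,x)\}\bigr),
\]
whose dimension is $1+\dim\mathrm{Span}\{V_j(\theta,x)\}$. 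Combining with the preceding identification of spans gives exactly \eqref{eq:2}.

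The step that needs the most care is the inductive computation of $V_j$: one must keep track of the fact that $f_0$ depends on both $\theta$ \emph{and} $x$ (so the correction terms $\phi_{j,i}$ involve $\partial_x\pulse$ as well), and verify that these corrections feed back only into the $\partial_\theta$ component, not into $V_j$. Once this is established, the triangular structure with nonzero leading coefficient $\pulse^{\,j}$ and the transversality of $\mathbb{R}\,f_0$ with the horizontal subspace are straightforward.
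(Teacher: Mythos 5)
Your proof is correct and takes essentially the same route as the paper's (one-line) proof, namely a direct computation of the iterated brackets in the coordinates $(\theta,x)$ using the splitting $\mathbb{R}\,\partial_\theta\oplus T_x\vari$; your triangularity argument in fact handles the factor $\pulse$ more carefully than the paper's hint, which simply takes $f_0=\partial/\partial\theta$. One small inaccuracy, immaterial to the argument: the coefficients $\phi_{j,i}$ involve only $\theta$-derivatives of $\pulse$, while $\partial_x\pulse$ enters only the $\partial_\theta$-components $\alpha_j$, exactly as your recursion $V_{j+1}=\pulse\,\partial_\theta V_j$ shows.
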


\vspace{-1.3\baselineskip}

\begin{proof}
  Straightforward computation using the fact that $f_0=\partial/\partial\theta$.
\hfill\end{proof}

\smallskip

Note that the right-hand side is $r(\theta,x)$. Proposition \ref{lem-dim} applies, with this definition of $r$. In
particular, the ``full rank case'' becomes:
\begin{proposition}
  \label{prop-open-Kep} If the vector fields $f_0$ and $\mathrm{ad}_{f_0}^j f_k$, $1\leq k\leq  m$, $j\in\NN$
  span the whole tangent space of $S^1\times\vari$, then $\EE(x)$ has a nonempty interior for all $x$.
\end{proposition}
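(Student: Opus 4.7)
The plan is to reduce Proposition~\ref{prop-open-Kep} to the two results just established in this subsection, namely the identity \eqref{eq:2} of Proposition~\ref{prop-intrinsic-Kep} and point~\ref{lemdim-it2} of Proposition~\ref{lem-dim}. Concretely, I would translate the spanning hypothesis on $\{f_0,\mathrm{ad}_{f_0}^j f_k\}$ into a statement about the rank $r(\theta,x)$, then invoke the ``full rank'' dimension conclusion.

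\textbf{Step 1.} Fix $(\theta,x)\in S^1\times\vari$. Since $\dim T_{(\theta,x)}(S^1\times\vari)=n+1$, the hypothesis that the vector fields $f_0$ and $\mathrm{ad}_{f_0}^j f_k$ ($1\le k\le m$, $j\in\NN$) span this tangent space means that the rank appearing on the right-hand side of \eqref{eq:2} equals $n+1$. By Proposition~\ref{prop-intrinsic-Kep},
\[
  r(\theta,x)\;=\;\dim\Bigl(\sum_{j\in\NN}\Range\frac{\partial^j G}{\partial\theta^j}(\theta,x)\Bigr)\;=\;(n+1)-1\;=\;n,
\]
and this holds for every $\theta$, in particular for at least one $\theta\in S^1$.

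\textbf{Step 2.} Apply point~\ref{lemdim-it2} of Proposition~\ref{lem-dim}, with $\GG$ replaced by $G$ as justified at the beginning of this subsection. Since $r(\theta,x)=n$ for at least one $\theta$, we conclude that $\EE(x)$ has a nonempty interior in $T_x\vari$, which is exactly the claim.

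\textbf{What needs verification.} There is essentially no obstacle; the only point to double-check is the legitimacy of substituting $G$ for $\GG$ in Proposition~\ref{lem-dim} point~\ref{lemdim-it2} under the Kepler averaging of \S\ref{sec-kep-aver}, where $\overline{\GG}$ is built from $G/\pulse$ rather than from a raw $\GG$. Because $\pulse(\theta,x)\ge k_\pulse>0$ by \eqref{eq:pulsemin}, pointwise in $\theta$ one has $\Range G(\theta,x)=\Range\bigl(G(\theta,x)/\pulse(\theta,x)\bigr)$, and a straightforward induction on $j$ (using Leibniz on $G=\pulse\cdot(G/\pulse)$) shows that the summed ranges of $\partial^j G/\partial\theta^j$ and of $\partial^j(G/\pulse)/\partial\theta^j$ coincide; hence the integer $r(\theta,x)$ is unchanged by the substitution and the proof of Proposition~\ref{lem-dim} goes through verbatim. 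With this check, Steps~1--2 above complete the argument.
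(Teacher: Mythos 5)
Your proposal is correct and follows exactly the route the paper intends: the paper gives no separate proof of Proposition~\ref{prop-open-Kep}, simply observing that the right-hand side of \eqref{eq:2} is $r(\theta,x)$ and that Proposition~\ref{lem-dim} (point~\ref{lemdim-it2}) then applies, which is precisely your Steps~1--2. Your final verification that replacing $\GG$ by $G$ (i.e.\ dividing by $\pulse>0$ and multiplying by $\overline{\pulse}(x)>0$) leaves the summed ranges, hence $r(\theta,x)$, unchanged is a point the paper merely asserts (``$\GG$ can simply be replaced with $G$''), so that extra care is welcome but not a deviation.
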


\subsection{The function $H(x,p)$}
Instead of \eqref{eq:Hamiltonian}, $H$ has to be taken as follows, with $\overline{\pulse}$ defined in \eqref{eq:20}:
\begin{equation}
  \label{eq:HamKep}
  H(x,p) =\ 
  \overline{\pulse}(x)\;
  \frac{1}{2\pi}\!\!
  \int_0^{2\pi}\bigl\|\left\langle p,\frac{G(\theta,x)}{\pulse(\theta,x)}\right\rangle\bigr\|\mathrm{d}\theta
\;.
\end{equation}
The characterization of $\EE(x)$ in Proposition~\ref{lem-E-H} is unchanged.
In the ``full rank case'', the results from \S\ref{sec-fullrank} on the degree of differentiability apply without a change. % Note that, since
% $\pulse$ is a non-vanishing scalar function, it does not affect the differentiability of $H$.

\subsection{Application to the minimum time problem} As in \S\ref{sec-Tmin}, but for the Kepler system \eqref{eq:Kep},
let $x_0$, $x_1$ be fixed, call $T_\varepsilon$ the minimum time such that, from \emph{some} $\theta_0,\theta_1$,
$(\theta_1,x_1)$ can be reached from $(\theta_0,x_0)$ in system $(\mathcal{K}_\varepsilon)$ (obviously
$T_\varepsilon\to+\infty$ as $\varepsilon\to 0$) and $T_0$ the minimum time such that $x_1$ can be reached
from $x_0$ in the average system \eqref{eq:21}.
The equivalent of Theorem~\ref{prop-limT}, with a similar proof, using Theorem~\ref{th-conv-sysM-Kep}, is:

\smallskip

\begin{theorem}
\label{prop-limTKep}
In the full rank case, one has $\ \limsup_{\varepsilon\to0} \varepsilon T_\varepsilon\leq T_0\ $
(hence $\varepsilon\, T_\varepsilon$ is bounded as $\varepsilon\!\to\!0$).
If, for all $\varepsilon>0$ small enough, there is a minimizing solution
$(\theta_\varepsilon,x_\varepsilon):[0,T_\varepsilon]\to S^1\times\vari$ and they all remain in a common compact subset of
$\vari$, then 
all accumulation points (as $\varepsilon\!\to\!0$) of the compact family $(\tKep\to
x_\varepsilon(\frac\tKep\varepsilon))_{\varepsilon>0}$ in $C^0([0,T_0],\vari)$ are minimizing for the average system and
$\ \lim_{\varepsilon\to0}\varepsilon\,T_\varepsilon=T_0\ .$
\end{theorem}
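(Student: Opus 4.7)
The strategy is to mirror the proof of Theorem~\ref{prop-limT} in the Kepler setting, substituting Theorem~\ref{th-conv-sysM-Kep} for Theorem~\ref{th-conv-sysM} and rescaling time by $\varepsilon$ throughout. The first thing I would establish is a Kepler analogue of Lemma~\ref{lem-TT}: there is a neighborhood $\mathcal{W}$ of $x_1$ and constants $\alpha_0, C_3 > 0$ such that, for any $y \in \mathcal{W}$ with $d(y, x_1) \leq \alpha_0$,
\begin{equation*}
\varepsilon\,T_\varepsilon(y, x_1) \;\leq\; C_3\, d(y, x_1) \;+\; O(\varepsilon),
\end{equation*}
together with the sub-additivity $T_\varepsilon(x_0, x_1) \leq T_\varepsilon(x_0, x'_1) + T_\varepsilon(x'_1, x_1) + 2\pi/k_\pulse$ (the extra term waits at most one period to align the free final $\theta_1$). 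This follows by applying Proposition~\ref{prop-controllability} point~\ref{item-cont1} to the fast-oscillating system $(\widetilde{\Sigma}_{\theta_0,\varepsilon})$ of \S\ref{sec-kep-trsf} and then pulling the control back through the time change $t = \mathscr{R}^{-1}(\lambda)$ and the control rescaling $v = \varepsilon \widehat{u}$.

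For Part 1, I take a minimizing sequence $x^k : [0, T_0 + \beta_k] \to \vari$ of \eqref{eq:21} with $\beta_k \downarrow 0$, $x^k(0)=x_0$, $x^k(T_0+\beta_k)=x_1$. Theorem~\ref{th-conv-sysM-Kep}~point~1, applied to each $x^k$ with any choice of $\theta^0$, yields solutions $(\theta^k_\varepsilon, x^k_\varepsilon)$ of $(\mathcal{K}_\varepsilon)$ with $x^k_\varepsilon(0)=x_0$ and $d(x^k_\varepsilon(t), x^k(\varepsilon t)) < c_k\varepsilon$ for $t \in [0, (T_0+\beta_k)/\varepsilon]$. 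Setting $y^k_\varepsilon := x^k_\varepsilon((T_0+\beta_k)/\varepsilon)$ gives $d(y^k_\varepsilon, x_1) < c_k\varepsilon$, so the Kepler analogue of Lemma~\ref{lem-TT} yields $\varepsilon\,T_\varepsilon(y^k_\varepsilon, x_1) = O(\varepsilon)$. Combining the two segments through sub-additivity:
\begin{equation*}
\varepsilon\,T_\varepsilon \;\leq\; \varepsilon\,T_\varepsilon(x_0, y^k_\varepsilon) + \varepsilon\,T_\varepsilon(y^k_\varepsilon, x_1) + 2\pi\varepsilon/k_\pulse \;\leq\; T_0 + \beta_k + O(\varepsilon).
\end{equation*}
Letting $\varepsilon \to 0$ then $k \to \infty$ yields $\limsup_{\varepsilon \to 0} \varepsilon\,T_\varepsilon \leq T_0$.

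For Part 2, assume that minimizing solutions $(\theta_\varepsilon, x_\varepsilon)$ exist and remain in a common compact $\Set{K}$. Part~1 provides an upper bound $\overline{T}$ for $\varepsilon T_\varepsilon$; I extend each $\tKep \mapsto x_\varepsilon(\tKep/\varepsilon)$ to $[0,\overline{T}]$ by taking it constant equal to $x_1$ beyond $\varepsilon T_\varepsilon$. By Theorem~\ref{th-conv-sysM-Kep}~point~2, any sequence $(\tKep \mapsto x_{\varepsilon_n}(\tKep/\varepsilon_n))$ with $\varepsilon_n \to 0$ is compact for uniform convergence. Extract a subsequence along which both $\varepsilon_n T_{\varepsilon_n} \to T^*$ and the rescaled trajectories converge uniformly to some $x^*$. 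Then $x^*(0)=x_0$, $x^*(T^*)=x_1$, and $x^*$ is a solution of \eqref{eq:21}, so $T^* \geq T_0$ by definition. Combined with Part~1 this forces $T^* = T_0$, giving both $\lim \varepsilon T_\varepsilon = T_0$ and the optimality of every accumulation point.

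The main obstacle is proving the Kepler version of Lemma~\ref{lem-TT}; everything else is a bookkeeping translation of the Theorem~\ref{prop-limT} argument. The delicate point there is that time $T$ in the $\lambda$-parameter corresponds to real time of order $T/(\varepsilon k_\pulse)$, so the short-time controllability from Proposition~\ref{prop-controllability} yields real times scaling like $1/\varepsilon$ (hence the $C_3 d(y,x_1)$ bound on $\varepsilon T_\varepsilon$ rather than on $T_\varepsilon$), and one must verify that the $L^\infty$-bound on $\widehat{u}$ translates to $\|v\|\leq\varepsilon$ as required by \eqref{eq:Kep}.
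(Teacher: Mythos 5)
Your proposal is correct and is essentially the paper's intended argument: the paper gives no written proof, stating only that Theorem~\ref{prop-limTKep} follows by the same reasoning as Theorem~\ref{prop-limT} with Theorem~\ref{th-conv-sysM-Kep} replacing Theorem~\ref{th-conv-sysM}, which is exactly the translation you carry out (including the rescaled Kepler analogue of Lemma~\ref{lem-TT} obtained from Proposition~\ref{prop-controllability} through the time change of \S\ref{sec-kep-trsf}). Your closing remarks on the $\lambda$-to-real-time scaling and on rescaling the control to respect $\|v\|\leq\varepsilon$ are precisely the bookkeeping the paper leaves implicit.
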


\smallskip

The Hamiltonian for minimum time for the average system is given by \eqref{eq:HamKep}; one has to perform the time
scaling described in \S\ref{sec-kep-trsf} to have a result like Theorem~\ref{th-ham-flow} and the simple ``commutation
between averaging and writing Hamiltonian'' noted in Remark~\ref{rmk-HamCom}. Let us translate in terms of
\eqref{eq:Kep} the sufficient condition for existence of a Hamiltonian flow given by Theorem~\ref{th-ham-flow}:
\begin{theorem} 
\label{th-ham-flow-K}
In the full rank case, 
assume that $\langle p , G(\theta,x) \rangle$ and $\langle p ,
    {\partial G}/{\partial\theta}(\theta,x) \rangle$ do not vanish simultaneously outside $\{p=0\}$, that 
    $\theta\mapsto\langle p, G(\theta,x) \rangle$ vanishes at most once for each $(x,p)\!\in\mathrm{T}^*\vari$, $p\neq0$, and that
    $\rank\GG(\theta, x)=m$ for each $(\theta,x)\in S^1\times\vari$.
Then \eqref{eq-sys-maxHmoy}, with $H$ given by \eqref{eq:HamKep}, has a unique solution for any initial condition.
\end{theorem}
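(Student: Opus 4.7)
The plan is to reduce Theorem~\ref{th-ham-flow-K} to Theorem~\ref{th-ham-flow}, by rewriting the averaged Kepler Hamiltonian \eqref{eq:HamKep} as a scalar multiple of an averaged Hamiltonian of the fast-oscillating type \eqref{eq:Hamiltonian}, transferring the Lipschitz-log estimate of Theorem~\ref{th-ham-LipLog} to this product, and applying Kamke's uniqueness criterion as in the proof of Theorem~\ref{th-ham-flow}.

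First I would factor
\begin{equation*}
H(x,p) \;=\; \overline{\pulse}(x)\,\widetilde{H}(x,p),\qquad
\widetilde{H}(x,p)\;=\;\frac{1}{2\pi}\int_0^{2\pi}\bigl\|\langle p,\widetilde{\GG}(\theta,x)\rangle\bigr\|\,\mathrm{d}\theta,\qquad
\widetilde{\GG}(\theta,x)=\frac{G(\theta,x)}{\pulse(\theta,x)}.
\end{equation*}
Here $\widetilde{H}$ has exactly the form \eqref{eq:Hamiltonian} for the fast-oscillating system with control vector fields $\widetilde{\GG}$. By \eqref{eq:20} and \eqref{eq:pulsemin}, $\overline{\pulse}$ is $C^\infty$ and bounded above and below away from zero.

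Next I would transfer the hypotheses of Theorem~\ref{th-ham-flow-K} from $G$ to $\widetilde{\GG}$ so that Theorem~\ref{th-ham-LipLog} applies to $\widetilde{H}$. Because $\pulse\geq k_\pulse>0$, one has $\rank\widetilde{\GG}(\theta,x)=\rank G(\theta,x)=m$, and a routine induction via the Leibniz rule shows
\begin{equation*}
\sum_{j\in\NN}\Range\frac{\partial^j\widetilde{\GG}}{\partial\theta^j}(\theta,x)
\;=\;\sum_{j\in\NN}\Range\frac{\partial^j G}{\partial\theta^j}(\theta,x),
\end{equation*}
so the full-rank condition \eqref{eq:fullrank} transfers. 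Similarly $\langle p,\widetilde{\GG}(\theta,x)\rangle=0$ iff $\langle p,G(\theta,x)\rangle=0$, and at such a point the quotient-rule correction drops out to give $\langle p,\partial\widetilde{\GG}/\partial\theta\rangle=\langle p,\partial G/\partial\theta\rangle/\pulse$, so both conditions (i) and (ii) of Theorem~\ref{th-ham-LipLog} hold for $\widetilde{\GG}$. Invoking that theorem yields, in a coordinate neighborhood of any $(\bar x,\bar p)$ with $\bar p\neq 0$, an estimate
\begin{equation*}
\|\mathrm{d}\widetilde{H}(Y)-\mathrm{d}\widetilde{H}(X)\|\;\leq\;c\,\|X-Y\|\ln\frac{1}{\|X-Y\|}.
\end{equation*}

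From here, the product rule $\mathrm{d}H=\overline{\pulse}\,\mathrm{d}\widetilde{H}+\widetilde{H}\,\mathrm{d}\overline{\pulse}$ and the fact that $\widetilde{H}$ is $C^1$ by Theorem~\ref{prop-HnormC1} (with $\mathrm{d}\widetilde{H}$ locally bounded) imply the same Lipschitz-log bound for $\mathrm{d}H$, the extra smooth factors contributing only a term of size $O(\|X-Y\|)$ which is absorbed in a larger constant. Kamke's uniqueness theorem, applied exactly as in the proof of Theorem~\ref{th-ham-flow} with $\omega(u)=c\,u\ln(1/u)$ and the divergence $\int_0^\alpha \mathrm{d}u/\omega(u)=+\infty$, then gives uniqueness of solutions of \eqref{eq-sys-maxHmoy} from any initial condition with $p\neq 0$; the case $p=0$ is handled separately since $H$ is first-order positively homogeneous in $p$, so $\{p=0\}$ is invariant and the argument localizes to each sheet.

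The main obstacle is the verification in step two: one has to be careful that the hypotheses stated on the pair $(G,\partial G/\partial\theta)$ really carry over to $\widetilde{\GG}=G/\pulse$ without any zero set being accidentally introduced or lost by the division. This is bookkeeping rather than deep, but it is the only place where the specific structure of the Kepler system (positivity of $\pulse$) enters essentially, and the equivalence of the full-rank conditions requires the two-sided Leibniz induction rather than a one-line observation.
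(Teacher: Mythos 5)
Your proof is correct and follows the route the paper itself intends (the paper gives no separate proof, presenting Theorem~\ref{th-ham-flow-K} as the direct translation of Theorems~\ref{th-ham-LipLog} and \ref{th-ham-flow} to $\GG=G/\pulse$, with the smooth positive factor $\overline{\pulse}$ declared harmless): your factorization $H=\overline{\pulse}\,\widetilde H$, the Leibniz/quotient-rule transfer of the full-rank and (i)--(ii) hypotheses, the product-rule preservation of the $\|X-Y\|\ln(1/\|X-Y\|)$ modulus, and Kamke's criterion are exactly the details being left to the reader. The only loose point is your closing remark about $\{p=0\}$ (where $\mathrm{d}H$ is not even defined), a case the paper also glosses over; a cleaner way to dispose of it is to note that $H$ is conserved along solutions and, by the full-rank condition, $H(x,p)>0$ whenever $p\neq0$, so extremals issued from $p\neq0$ never meet $\{p=0\}$.
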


The discussion that follows Theorem~\ref{th-ham-flow} also applies to the above; let us mention that, once it has been checked
that, for each $(x,p)$, $\langle p, G(\theta,x) \rangle$ vanishes for at most one $\theta$, the other conditions are
guaranteed if \eqref{eq:rank-2} holds with $\GG$ replaced by $G$ or, in terms of the vector fields in \eqref{eq:Kep-champs}, if, for all $\xi=(\theta,x)$,
\begin{equation}
  \label{eq:7}
  \rank\{f_0(\xi) ,f_1(\xi),\ldots,f_m(\xi) ,\mathrm{ad}_{f_0}f_1(\xi),\ldots,\mathrm{ad}_{f_0}f_m(\xi)\}=n+1.
\end{equation}

We prove in the next section that the above conditions are true for the planar control 2-body problem.

\section{Application to the controlled 2-body system}
\label{sec-2body}

In this section we study some properties of the planar control system and demonstrate that it satisfies the condition of Theorem~\ref{th-ham-flow} on the domain of non-degenerated elliptic orbits. 

\subsection{Planar control 2-body system}
It is classically described by some first integrals of the free movement ---here the semi-major
axis $a$ and the eccentricity vector $(e_x,e_y)$--- and one angle $\LLL$ following the dynamics;
we restrict to the set of non-degenerated elliptic orbits rotating in the direct sense, i.e. the state space is
$S^1\times\vari$ with $\vari=\{(a,e_x,e_y)\in\RR^3,\;a>0\,\text{ and }\,{e_x}^2+{e_y}^2<1\}$.
The control $u=(u_t,u_n)$ is expressed in the tangential-normal frame and the system reads:
\begin{eqnarray}
  \label{eq:motion}
  \frac{\mathrm{d}}{\mathrm{d}t}\!
\left(\!\!
\begin{array}{c}
a\\
e_x\\
e_y \\
\LLL
\end{array}
\!\!\right)
&=&
\frac1{a^{3/2}}\!\!\left(\!\!\!
\begin{array}{c}
0\\0\\0\\\mathtt{w}(e_x,e_y,\LLL)
\end{array}
\!\!\!\right)
+
\sqrt a \,
\left(\!\!
  \begin{array}{cc}
2\,a\,\mathtt{a}_a(e_x,e_y,\LLL) & 0
\\
2\,\mathtt{a}_x(e_x,e_y,\LLL) & \!\!\mathtt{b}_x(e_x,e_y,\LLL)
\\
2\,\mathtt{a}_y(e_x,e_y,\LLL) & \!\!\mathtt{b}_y(e_x,e_y,\LLL)
\\0&0
  \end{array}
\!\!\!
\right)
\!\!\!\!
  \begin{array}{c}\left(\!\!\!
  \begin{array}{c}
u_t\\u_n
  \end{array}
\!\!\!\right)
\\
\ \\\ 
\end{array}
\end{eqnarray}
\begin{align*}
  \text{with}\ \ \
\mathtt{w}(e_x,e_y,L) &=
\frac 
{ \left( 1+e_x \cos \LLL+ e_y \sin \LLL  \right) ^{2}}
{ \left( 1-{e}^{2} \right) ^{3/2}}\,,
\displaybreak[0]
\\
\mathtt{a}_a(e_x,e_y,\LLL) &=
\frac 
{\sqrt {1+{e}^{2}+2\,e_x \cos \LLL+2\, e_y \sin \LLL }}
{\sqrt {1-{e}^{2}}}\,,
\displaybreak[0]
\\
  \mathtt{a}_x(e_x,e_y,\LLL)&=
\frac 
{\sqrt {1-{e}^{2}}}
{\sqrt {1+{e}^{2}+2\,e_x \cos \LLL+2\, e_y \sin \LLL }}
(e_x + \cos \LLL)\,,
\displaybreak[0]
\\
  \mathtt{a}_y(e_x,e_y,\LLL)&=
\frac 
{\sqrt {1-{e}^{2}}}
{\sqrt {1+{e}^{2}+2\,e_x \cos \LLL+2\, e_y \sin \LLL }}
(e_x + \cos \LLL)\,,
\displaybreak[0]
\\
  \mathtt{b}_x(e_x,e_y,\LLL) &=
\frac 
{\sqrt {1-{e}^{2}}}
{\sqrt {1+{e}^{2}+2\,e_x \cos \LLL+2\, e_y \sin \LLL }}
\\&\hspace{6em}
\times \frac
{-2\,e_y+(e_x^2-e_y^2-1)\sin \LLL - 2\, e_x e_y \cos \LLL}
{1+2\,e_x \cos \LLL+2\, e_y \sin \LLL}\,,
\displaybreak[0]
\\
 \mathtt{b}_y(e_x,e_y,\LLL) &=
\frac 
{\sqrt {1-{e}^{2}}}
{\sqrt {1+{e}^{2}+2\,e_x \cos \LLL+2\, e_y \sin \LLL }}
\\&\hspace{6em}
\times \frac
{2\,e_x+(e_x^2-e_y^2+1)\cos \LLL + 2\, e_x e_y \sin \LLL}
{1+2\,e_x \cos \LLL+2\, e_y \sin \LLL}\,.
\end{align*}
The eccentricity $e$ is the norm of the eccentricity vector, $e=\sqrt{e_x^2+e_y^2}$. Low thrust translates into $\|u\|
\leq \varepsilon$ for a small $\varepsilon$. 

%%%%%%%%%%%%%%%
% REMARK :
%%%%%%%%%%%%%%%

\refstepcounter{theorem}\paragraph{Remark~\thetheorem}\label{rmk-2body}
This is indeed a ``Kepler control system'' of the type \eqref{eq:Kep} except that $\pulse=\mathtt{w}/a^{3/2}$ is, although
strictly positive, not bounded from below by a positive constant on $S^1\times\vari$. There is such a lowerbound if one replaces
$\vari$ by $\vari^{\bar{c}}=\{(a,e_x,e_y)\in\RR^3,\;a>0\,\text{ and}\,{e_x}^2+{e_y}^2<\bar{c}\}$ with $\bar{c}<1$.
Strictly speaking, the results of the paper have to be applied in $\vari^{\bar{c}}$, $\bar{c}<1$.
However, Theorems~\ref{th-conv-sysM-Kep} or \ref{th-ham-flow-K}, for instance, may be applied in $\vari$ because each statement may ultimately
be restricted to a compact subset of $\vari$, itself included in some $\vari^{\bar{c}}$, $\bar{c}<1$.

%%%%%%%%%%%%%%%
% END OF REMARK
%%%%%%%%%%%%%%%
\smallskip

The Hamiltonian that both defines the average system according to \eqref{eq:12} and yields the Hamiltonian system governing
extremals for minimum time is given by
\eqref{eq:HamKep}. Since $\int_0^{2\pi}\mathrm{d}\LLL/\mathtt{w}(e_x,e_y,\LLL)=2\pi$,
it can be expressed as\\ $H(a,e_x,e_y,p_a,p_{e_x},p_{e_y})=\sqrt{a} \mathcal{H}(e_x,e_y,a p_a,p_{e_x},p_{e_y})$ with
\begin{align*}
%\label{eq-2b-H}
\mathcal{H}(e_x,e_y,A,X,Y)=&\frac{1}{2\pi} \int_0^{2\pi}\| (A\, X\, Y) \mathtt{G}(e_x,e_y,\LLL) \|, 
%\end{equation}
%with
%$\displaystyle %\begin{equation}
\\
\mathtt{G}(e_x,e_y,\LLL) =&
\left(
  \begin{array}{cc}
2\,\mathtt{a}_a/\mathtt{w} & 0
\\
2\,\mathtt{a}_x/\mathtt{w} & \mathtt{b}_x/\mathtt{w}
\\
2\,\mathtt{a}_y/\mathtt{w} & \mathtt{b}_y/\mathtt{w}
 \end{array}
\right).
\end{align*}

\subsection{Hamiltonian flow} Theorem~\ref{th-ham-flow-K} applies to this system. Indeed:
% We want to make sure that the conditions of Theorem~\ref{th-ham-flow-K} are satisfied.

\begin{proposition}
\label{prop-zero-2body}
Fore each $(e_x,e_y,a)$ with ${e_x}^2+{e_y}^2<1$ and $a>0$, and each $(A,X,Y)\!\neq\!(0,0,0)$, the vector
$(A\, X\, Y) \mathtt{G}(e_x,e_y,\LLL)$ vanishes for at most one angle $\LLL$.
\end{proposition}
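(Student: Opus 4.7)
The plan is to exploit the rotational symmetry of the two-body problem to reduce to the case $e_y=0$, then recognize that the two scalar components of $(A\,X\,Y)\mathtt{G}(e_x,e_y,\LLL)=0$ become two affine linear equations in $(c,s)=(\cos\LLL,\sin\LLL)$, and finally check that the two lines they define in $\RR^2$ are never identical. Two distinct affine lines meet in at most one point of the plane, and \emph{a fortiori} in at most one point of the unit circle, giving at most one $\LLL\in S^1$.

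For the reduction, observe that rotating the orbital frame by an angle $\phi$ sends $(e_x,e_y,\LLL)$ to $(e_x\cos\phi-e_y\sin\phi,\,e_x\sin\phi+e_y\cos\phi,\,\LLL+\phi)$; one checks from the explicit formulas that the tangential--normal control fields (hence each column of $\mathtt{G}$) are invariant under this $SO(2)$-action, while the covector $(A,X,Y)$ rotates accordingly in its $(X,Y)$-components. Choosing $\phi=-\arg(e_x+ie_y)$ therefore reduces the problem to $(e_x,e_y)=(e,0)$ with $e\in[0,1)$, merely shifting the unknown $\LLL$ by $\phi$ and preserving the cardinality of the zero set.

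In these coordinates, clearing the strictly positive factors $1/\mathtt{w}$ and $1/\sqrt{(1-e^2)(1+e^2+2ec)}$, together with the (nonvanishing where $\mathtt{G}$ is defined) factor $1/(1+2ec)$, the two entries of $(A\,X\,Y)\mathtt{G}=0$ become
\begin{align*}
  \bigl(A(1+e^2)+(1-e^2)Xe\bigr) + \bigl(2Ae+(1-e^2)X\bigr)\,c + (1-e^2)Y\,s &= 0\,,\\
  2Ye + Y(1+e^2)\,c - X(1-e^2)\,s &= 0\,,
\end{align*}
whose coefficient triples I call $\alpha=(\alpha_0,\alpha_1,\alpha_2)$ and $\beta=(\beta_0,\beta_1,\beta_2)$. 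Suppose the two affine lines coincide, i.e.\ $\beta=\lambda\alpha$ for some scalar $\lambda$. The third-component equation $-X(1-e^2)=\lambda(1-e^2)Y$ forces $X=-\lambda Y$ (using $e<1$). Substituting this in the first two proportionalities yields two expressions for $A$ (when $Y,\lambda,e$ are all nonzero); equating them and using the identity $(1+e^2)^2-4e^2=(1-e^2)^2$ reduces to
$$ -(1-e^2)^2 \;=\; \lambda^2(1-e^2)^2\,, $$
which is impossible since $e<1$. The degenerate subcases $\lambda=0$, $Y=0$, or $e=0$ each force $X=Y=0$, whereupon the first equation becomes $A(1+e^2+2ec)=0$; but $1+e^2+2ec\geq(1-e)^2>0$, so this forces $A=0$, contradicting $(A,X,Y)\neq 0$. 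Hence $\alpha$ and $\beta$ are never proportional, the two affine lines are distinct, and the conclusion follows. The main technical hurdle is the algebraic cancellation leading to the impossible identity $\lambda^2=-1$; everything else is routine bookkeeping once the symmetry reduction has been performed.
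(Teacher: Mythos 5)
Your argument is correct in substance and is essentially the paper's strategy: clear the (positive) denominators to obtain two equations affine in $(\cos\LLL,\sin\LLL)$, note that a nondegenerate pair of such equations has at most one common solution, and kill the degenerate (proportional) case by algebra that ends in an impossibility of the form ``$\lambda^2=$ negative number''. The paper does this directly in the variables $(e_x,e_y)$, assembling the proportionality relations into a $3\times3$ system $M(A,X,Y)^\top=0$ with $\det M=(1-e)^3(1+e)^3(\lambda^2+4)>0$; you instead normalize $e_y=0$ first by the $SO(2)$-equivariance of the Gauss fields, which buys a shorter elimination ($\lambda^2=-1$) at the price of having to check the equivariance (true --- note that both your reduced equations and the paper's own proof implicitly correct the evident misprint in the displayed formula for $\mathtt{a}_y$, which should carry $e_y+\sin\LLL$, not $e_x+\cos\LLL$). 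Your main-case computation checks out.

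Two small repairs are needed in the degenerate case. First, your closing sentence ``hence $\alpha$ and $\beta$ are never proportional'' is not what your case analysis establishes: when $X=Y=0$ one has $\beta=0=0\cdot\alpha$, so the triples \emph{are} dependent; what you actually prove there (correctly) is that the system then has \emph{no} solution, because $A\neq0$ and $1+e^2+2e\cos\LLL\geq(1-e)^2>0$ --- so the conclusion to invoke is ``either $\alpha,\beta$ are independent (at most one $\LLL$) or $X=Y=0$ and there is no $\LLL$ at all'', not a contradiction with proportionality. Second, linear dependence of the two rows also includes the case $\alpha=0$, $\beta\neq0$, which is not of the form $\beta=\lambda\alpha$ and matters because a trivial first equation would leave a single line that could meet the unit circle twice; it must be excluded separately, and it is: $\alpha=0$ gives $Y=0$, $(1-e^2)X=-2Ae$, hence $A(1+e^2)-2Ae^2=A(1-e^2)=0$, so $(A,X,Y)=0$, contrary to hypothesis. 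With these two remarks inserted, the proof is complete.
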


\begin{proof}
Removing denominators, the equations $A\mathtt{a}_a+X\mathtt{a}_x+Y\mathtt{a}_y=0$ and $X\mathtt{b}_x+Y\mathtt{b}_y=0$
can be written:
\begin{align*}
    \left(2\, e_x A + 2 (1-e^2) X \right) \cos \LLL 
    + \left(2\, e_y  A \vphantom{e^2}\right.&\left.+ 2(1-e^2) Y \right) \sin \LLL
    \\ &= - (1+e^2) A - 2\, e_x (1-e^2) X - 2\,e_y (1-e^2) Y
  \\
    \left(-2\, e_x e_y X + (e_x^2-e_y^2+1) Y \right) \cos \LLL
    +&\left( (e_x^2-e_y^2-1) X + 2\,e_x e_y Y \right) \sin \LLL
    \\ &= 2\,e_y X - 2\, e_x Y .
\end{align*}

If $
\Delta = 
\left|
\begin{array}{cc}
2\, e_x A + 2 (1-e^2) X & 2\, e_y  A + 2(1-e^2)Y \\
-2\, e_x e_y X + (e_x^2-e_y^2+1) Y  & (e_x^2-e_y^2-1) X + 2\,e_x e_y Y 
\end{array}
\right|
\vphantom{\displaystyle\frac1{\displaystyle \frac12}}
$ is nonzero, there is clearly at most one solution $\LLL$.
If $\Delta = 0$, there exists $\lambda \neq 0$ such that 
\begin{eqnarray*}
2\,e_x A +2 (1-e^2) X & = & \lambda \left( -2\, e_x e_y X + (e_x^2-e_y^2+1) Y \right), \\
2\,e_y A +2 (1-e^2) Y & = & \lambda \left( (e_x^2-e_y^2-1) X + 2\, e_x e_y Y \right),
\end{eqnarray*}
and there may be a solution to the system above only if
\[
(1+e^2) A + 2\,e_x(1-e^2) X + 2\, e_y (1-e^2) Y = - 2\, \lambda (e_y X-e_x Y) 
\]
These three equations forms a linear system in $(A,X,Y)$, $M (A,X,Y)^T = 0$ with  
\[
M=
\left(
\begin{array}{ccc}
2 e_x & 2(1-e^2+\lambda e_x e_y) & -\lambda (e_x^2-e_y^2+1) \\
2 e_y & - \lambda (e_x^2-e_y^2-1) & 2(1-e^2-\lambda e_x e_y) \\
(1+e^2) & 2 \left( e_x (1-e^2) + \lambda e_y \right) & 2 \left( e_y (1-e^2) - \lambda e_x \right)
\end{array}
\right).
\]
A brief computation gives $\det M=(1-e)^3(1+e)^3 (\lambda^2 + 4)$, strictly positive when $0\leq e<1$. Hence $M (A,X,Y)^T = 0$ implies $(A,X,Y)=0$. 
\hfill\end{proof}

Since the rank of $\mathtt{G}$ is obviously equal to 2 and the rank of $\{\mathtt{G},\partial \mathtt{G}/\partial\LLL\}$
equal to 3 for
any $(e_x,e_y,\LLL)$, the hypotheses of Theorem~\ref{th-ham-flow-K} are satisfied by the planar control 2-body system,
and it guarantees
existence of a flow for the Hamiltonian system governing the extremals of minimum time for its average system.

\section{Conclusion}
\label{sec-concl}
Attempting to formulate a control theory equivalent to the averaging theorems for ODEs
naturally leads to, and justifies, the notion of average control system introduced in this paper.
It has a conceptual importance as well as, for instance, applications to approximation of minimum time control.

Besides its definition and description, we gave results on its regularity and on the dimension of its velocity set
(``number of inputs''). These are however mostly a starting point. The regularity of $H$ has to be further
explored when the conditions of Theorem~\ref{th-ham-flow} do not hold, see the last paragraph of \S\ref{sec-fast}.

It has already allowed us to give (with restrictions on the
eccentricities, see Remark~\ref{rmk-2body}) a proof~\cite{bom-pom:07:DCDS} that the minimum time between 2 ellipses
grows like $1/\varepsilon$ for the planar 2-body problem.
Here also, progress must be made.
Explicit computation of the average system and its extremals for the 2-body problem has to be conducted.

\section*{\bf Acknowledgements} The authors are indebted to Jana N{\v e}mcov{\'a} from Institute of Chemical
Technology, Prague, for a careful proof-reading of the draft manuscript, and to two anonymous referees from this
journal for extremely constructive reviews that make this paper considerably easier to read than the original
submission.

\bigskip
% \clearpage

\Appendix
\section{Proof of Proposition~\ref{prop-C1-LipLog}}

{\em Proof of Point \ref{point-C1}.}
The integral in \eqref{eq:H'} is well defined (its integrand is bounded) and, by \eqref{eq:8} and Lebesgue
convergence theorem, it is continuous with respect to $X$ and $h$. Let us prove that this 
$\mathrm{d}\mathsf{H}$ \emph{is} the derivative of $\mathsf{H}$.
  Since $\mathsf{V}$ is smooth, one has\vspace{-1ex}
  \begin{equation}
    \label{eq:00}
    \|\mathsf{V}(\theta,X+h)-\mathsf{V}(\theta,X)-\frac{\partial\mathsf{V}}{\partial X}(\theta,X).h\|\leq k\,\|h\|^2\,,
\vspace{-1ex}
  \end{equation}
  where $\frac{\partial\mathsf{V}}{\partial X}(\theta,X)$ is smooth with respect to $(\theta,X)$ and $k$ is some local
  constant. Now, assuming $\mathsf{V}(\theta,X)\neq0$, one has 
\vspace{-1.2ex}
  \begin{eqnarray*}
    \|\mathsf{V}(\theta,X+h)\|-\|\mathsf{V}(\theta,X)\|&=&
    \scal{\mathsf{V}(\theta,X+h)-\mathsf{V}(\theta,X)}{\frac{\mathsf{V}(\theta,X)}{\|\mathsf{V}(\theta,X)\|}}
    \\ &&
    \hspace{4.5em}+\;a(\theta,X,h)\,\frac{\|\mathsf{V}(\theta,X+h)-\mathsf{V}(\theta,X)\|^2}{\|\mathsf{V}(\theta,X)\|+\|\mathsf{V}(\theta,X+h)\|}\,
  \end{eqnarray*}
  with $|a(\theta,X,h)|\leq2$. 
Hence, from (\ref{eq:00}) and (\ref{eq:H'}), one has, for some local constant $k'$,
$$\frac{\|\mathsf{H}(X+h)-\mathsf{H}(X)-\mathrm{d}\mathsf{H}(X).h\|} {\|h\|}
\leq \frac{k'}{2\pi} \int_0^{2\pi} \left(\|h\|+\frac{\|\mathsf{V}(\theta,X+h)-\mathsf{V}(\theta,X)\|}{\|\mathsf{V}(\theta,X)\|+\|\mathsf{V}(\theta,X+h)\|}
\right)\mathrm{d}\theta
$$
for $\|h\|$ small enough.
For fixed $X$ and $h\to0$, the integrand in the right-hand side is bounded by $1+\|h\|$ and converges to zero for
$\theta$ outside the set $\{\theta\in S^1,\;\mathsf{V}(\theta,X)=0\}$: by (\ref{eq:8}) and Lebesgue convergence theorem,
the right-hand side tends to zero.
\qquad\endproof

\smallskip
Let us now state two lemmas that are needed in the proof of Point \ref{point-LipLog}.
\begin{lemma}
  \label{lem-chihat}
  Assume that $\bar{X}\in\widetilde{\mathcal{Z}}$ and \eqref{eq:A123} is satisfied. There is a neighborhood $\ouv$ of
  $\bar{X}$ in $\varid$ and a smooth map $\widehat{\chi}:\ouv\to S^1$ such that, for $(\theta,X)\in\ouv$, one has
  $\mathsf{V}(\theta,X)=0$ only if $\theta=\widehat{\chi}(X)$, and
\vspace{-.2\baselineskip}
\begin{equation}
  \label{eq:53}
  \scal {\frac{\partial \mathsf{V}}{\partial\theta}(\widehat{\chi}(X),X)} {\mathsf{V}(\widehat{\chi}(X),X)}
=0\,,\ \ X\in\ouv\,.
\vspace{-.4\baselineskip}
\end{equation}
\end{lemma}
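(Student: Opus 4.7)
The plan is to produce $\widehat{\chi}$ by applying the implicit function theorem to the condition \eqref{eq:53} (which is, up to a factor of $2$, the derivative with respect to $\theta$ of $\|\mathsf{V}\|^2$), and then to upgrade the resulting local-in-$\theta$ uniqueness to uniqueness on all of $S^1$ via compactness of $S^1$ together with assumption (b).

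First, since $\bar X\in\widetilde{\mathcal{Z}}$, there exists $\bar\theta\in S^1$ with $\mathsf{V}(\bar\theta,\bar X)=0$, and assumption (b) makes such a $\bar\theta$ unique; assumption (c) then ensures $\partial\mathsf{V}/\partial\theta(\bar\theta,\bar X)\neq 0$. Introduce the smooth scalar function
\[
F(\theta,X)=\scal{\mathsf{V}(\theta,X)}{\frac{\partial\mathsf{V}}{\partial\theta}(\theta,X)},
\]
so that \eqref{eq:53} is nothing but $F(\widehat{\chi}(X),X)=0$. The vanishing $\mathsf{V}(\bar\theta,\bar X)=0$ gives $F(\bar\theta,\bar X)=0$, and differentiating with respect to $\theta$ and using the same vanishing yields
\[
\frac{\partial F}{\partial\theta}(\bar\theta,\bar X)=\Bigl\|\frac{\partial\mathsf{V}}{\partial\theta}(\bar\theta,\bar X)\Bigr\|^2\;>\;0.
\]
The implicit function theorem therefore produces a neighborhood $\ouv_0$ of $\bar X$ in $\varid$, an open arc $J\subset S^1$ containing $\bar\theta$, and a smooth map $\widehat{\chi}:\ouv_0\to J$ with $\widehat{\chi}(\bar X)=\bar\theta$, characterized on $J\times\ouv_0$ as the unique solution of $F(\theta,X)=0$. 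This takes care of \eqref{eq:53}.

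It remains to show that, after possibly shrinking $\ouv_0$, \emph{every} zero of $\mathsf{V}(\cdot,X)$ lies in $J$. Since $\mathsf{V}(\theta,X)=0$ forces $F(\theta,X)=0$, this will then force $\theta=\widehat{\chi}(X)$. By assumption (b) applied at $X=\bar X$, the continuous function $(\theta,X)\mapsto\|\mathsf{V}(\theta,X)\|$ is strictly positive on the compact set $K\times\{\bar X\}$, where $K=S^1\setminus J$. By continuity, it stays bounded below by a positive constant on $K\times\ouv$ for some neighborhood $\ouv\subset\ouv_0$ of $\bar X$. Hence no $X\in\ouv$ admits a zero of $\mathsf{V}(\cdot,X)$ outside $J$, which concludes the argument.

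The only non-routine ingredient is this last step: it is the passage from the strictly local uniqueness produced by the IFT on $J\times\ouv_0$ to uniqueness over all $\theta\in S^1$, and it is precisely here that assumption (b) is essential rather than merely convenient---without it, distant zeroes of $\mathsf{V}(\cdot,X)$ could coexist with the one tracked by $\widehat{\chi}$, defeating the construction.
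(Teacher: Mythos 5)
Your proof is correct, and its central step is the same as the paper's: apply the implicit function theorem to the scalar equation $\scal{\mathsf{V}(\theta,X)}{\partial\mathsf{V}/\partial\theta(\theta,X)}=0$ (your $F$ is the paper's $T$), using that at the zero $(\bar\theta,\bar X)$ one has $\partial F/\partial\theta=\|\partial\mathsf{V}/\partial\theta\|^2>0$ thanks to (\ref{eq:A123}.c). Where you diverge is in how the ``only if'' clause is obtained. The paper first shows, using (\ref{eq:A123}.a) and (\ref{eq:A123}.c), that $\mathcal{Z}=\{\mathsf{V}=0\}$ is a submanifold projecting diffeomorphically onto $\widetilde{\mathcal{Z}}$, so that the zero of $\mathsf{V}(\cdot,X)$ for $X\in\widetilde{\mathcal{Z}}$ is a smooth function $\chi(X)$, and then identifies $\widehat{\chi}$ with $\chi$ on $U\cap\widetilde{\mathcal{Z}}$ via the IFT uniqueness; the global-in-$\theta$ statement then rests on (\ref{eq:A123}.b) together with continuity of $\chi$. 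You instead skip the submanifold construction entirely and localize all zeroes of $\mathsf{V}(\cdot,X)$ inside the IFT arc $J$ by a compactness argument on $S^1\setminus J$ (essentially the tube lemma), using only (\ref{eq:A123}.b) at $\bar X$ and continuity of $\|\mathsf{V}\|$. This is a perfectly valid and arguably more self-contained route: it makes the passage from local to global uniqueness in $\theta$ explicit (a point the paper treats rather tersely), and it does not invoke assumption (\ref{eq:A123}.a) at all, which is harmless here since that assumption only serves the submanifold/diffeomorphism construction you avoid.
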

\begin{proof}
  From (\ref{eq:A123}.a), $\mathcal{Z}=\{(\theta,X)\in S^1\times\varid\,,\;\mathsf{V}(\theta,X)=0\}$ is a smooth submanifold of
  $S^1\times\varid$ and from (\ref{eq:A123}.c), $\widetilde{\mathcal{Z}}$ given by \eqref{eq:H-Ztilde} a smooth submanifold of
  $\varid$, both of dimension $d+1-m$, and the projection $\pi:S^1\times\varid\to\varid$ induces a diffeomorphism
  $\mathcal{Z}\to\widetilde{\mathcal{Z}}$ whose inverse is of the form $x\mapsto(\chi(x),x)$ with $\chi$ a smooth map
  $\widetilde{\mathcal{Z}}\to S^1$ that satisfies, for all $X\in\widetilde{\mathcal{Z}}$: $\mathsf{V}(\theta,X)=0$ if an only if $\theta=\chi(x)$.

  Consider the map $T:S^1\times\varid\to\RR$ given by
  $T(\theta,X)=\scal {\frac{\partial \mathsf{V}}{\partial\theta}(\theta,X)} {\mathsf{V}(\theta,X)}$.
  Let $\bar{X}$ be in $\widetilde{\mathcal{Z}}$; since $\mathsf{V}(\chi(\bar{X}),\bar{X})=0$, one has
  $T(\chi(\bar{X}),\bar{X})=0$ and $\partial T/\partial\theta(\chi(\bar{X}),\bar{X})= 
  \|\frac{\partial \mathsf{V}}{\partial\theta}(\chi(\bar{X}),\bar{X})\|^2$, nonzero from assumption (\ref{eq:A123}.b): the implicit function
  theorem yields a unique map $\widehat{\chi}$ from a neighborhood $U$ of $\bar X$ in $\varid$ to a neighborhood of
  $\chi(\bar X)$ in $S^1$ such that $\theta=\widehat{\chi}(X)$ solves $T(\theta,X)=0$; it must
  therefore coincide with $\chi$ in $U\cap\widetilde{\mathcal{Z}}$ and satisfies the lemma. 
\qquad\end{proof}
\begin{lemma}
  \label{lem-coord}
Assume that $\bar{X}\in\widetilde{\mathcal{Z}}$ and \eqref{eq:A123} is
satisfied. There exist a neighborhood $\ouv$ of $\bar{X}$ in $\varid$, local coordinates
$x_1,\ldots,x_d$ defined on $\ouv$, and
smooth maps \\$P\!:\ouv\to SO(m)$, $\alpha\!:\ouv\to\RR$, and 
$W\!:S^1\times\ouv\to\RR^m$ such that, with $X_{\mathbf{I}}\!=\!{\scriptstyle\left(\!\!\!
    \begin{array}{c}
x_1\\[-.7ex]\vdots\\[-1ex] x_{m-1}
    \end{array}
\!\!\!\right)}$,  
\vspace{-1.2ex}
\begin{eqnarray}
  \label{eq:Vdev}
  \mathsf{V}(\theta,X)&=&P(X)\Bigl[\left(
    \begin{array}{c}
X_{\mathbf{I}}\\\alpha(X)\,\left(\theta-\widehat{\chi}(X)\right)
    \end{array}
\right)
+\left(\theta-\widehat{\chi}(X)\right)^2\,W(\theta,X)
\Bigr]
\\
  \label{eq:Vdev2}
 &=&P(X)\Bigl[
{\scriptstyle\left(\!\!\!
    \begin{array}{c}
X_{\mathbf{I}}\\0
    \end{array}
\!\!\!\right)}
+\left(\theta-\widehat{\chi}(X)\right)\,W_1(\theta,X)
\Bigr]
\\[-0.5ex]
\label{eq:W1}
&&\text{with}\ \; W_1(\theta,X)=
{\scriptstyle\left(\!\!\!
    \begin{array}{c}
0_{m-1}\\\alpha(X)
    \end{array}
\!\!\!\right)}
+\left(\theta-\widehat{\chi}(X)\right)\,W(\theta,X) \ ,
\end{eqnarray}
in $S^1\times\ouv$, where $\alpha$ is bounded from below: $0<\alpha_0<\alpha(X)$, $X\in\ouv$. 
Furthermore, for a constant $K_3>0$, one has, for all $(\theta,X)\in S^1\times\ouv$,
\begin{align}
  \label{eq:10}
  \|\mathsf{V}(\theta,X)\|\geq& \;{K_3}\sqrt{\|X_{\mathbf{I}}\|^2+\alpha(X)^2\left(\theta-\widehat{\chi}(X)\right)^2}\,,
\ \ \ \ \ \ \ \ \ \ \ \
\\
  \label{eq:11}
\text{and}\ \ \ \ \ \ \ \ \ \ \ \ \ \ \ \ \ \  X_{\mathbf{I}}=0\ \Rightarrow&\ 
  \|W_1(\theta,X)\|\geq K_3\,.
\end{align}
\end{lemma}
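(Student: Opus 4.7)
The plan is to derive the normal form from three ingredients: the smooth map $\widehat{\chi}$ supplied by Lemma~\ref{lem-chihat}, a second-order Taylor expansion of $\mathsf{V}$ in $\theta$ around $\theta=\widehat{\chi}(X)$, and a smooth rotation $P(X)\in SO(m)$ that aligns the velocity $(\partial\mathsf{V}/\partial\theta)(\widehat{\chi}(X),X)$ with the $m$th coordinate axis. I would first write, by Taylor's formula with integral remainder,
\begin{equation*}
\mathsf{V}(\theta,X)=\mathsf{V}(\widehat{\chi}(X),X)+(\theta-\widehat{\chi}(X))\,\frac{\partial\mathsf{V}}{\partial\theta}(\widehat{\chi}(X),X)+(\theta-\widehat{\chi}(X))^{2}R(\theta,X)
\end{equation*}
with $R$ smooth. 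Assumption (\ref{eq:A123}.c) and continuity keep $(\partial\mathsf{V}/\partial\theta)(\widehat{\chi}(X),X)$ nonzero on some neighborhood $\ouv$ of $\bar X$, while \eqref{eq:53} makes it orthogonal to $\mathsf{V}(\widehat{\chi}(X),X)$. A smooth Gram--Schmidt construction on $\ouv$ then yields $P:\ouv\to SO(m)$ such that $P(X)^{\!\top}(\partial\mathsf{V}/\partial\theta)(\widehat{\chi}(X),X)=(0,\ldots,0,\alpha(X))^{\!\top}$ with $\alpha(X)=\|(\partial\mathsf{V}/\partial\theta)(\widehat{\chi}(X),X)\|$ bounded below on $\ouv$ by some $\alpha_0>0$.

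By the orthogonality relation, $P(X)^{\!\top}\mathsf{V}(\widehat{\chi}(X),X)$ has vanishing last component and is therefore of the form $(\Phi(X),0)^{\!\top}$ for a smooth $\Phi:\ouv\to\RR^{m-1}$ that vanishes precisely on $\widetilde{\mathcal{Z}}$. Since $\widetilde{\mathcal{Z}}$ has codimension $m-1$ and is cut out by these $m-1$ equations (Lemma~\ref{lem-chihat}'s proof), the components $\Phi_{1},\ldots,\Phi_{m-1}$ have independent differentials at $\bar X$; I set $x_{1},\ldots,x_{m-1}$ equal to those components and complete to a chart by choosing $x_{m},\ldots,x_{d}$ from any smooth parametrization of $\widetilde{\mathcal{Z}}$ near $\bar X$, so that $X_{\mathbf{I}}=\Phi(X)$. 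Multiplying the Taylor expansion on the left by $P(X)^{\!\top}$ yields \eqref{eq:Vdev} with $W(\theta,X)=P(X)^{\!\top}R(\theta,X)$, and \eqref{eq:Vdev2}--\eqref{eq:W1} follow by factoring $(\theta-\widehat{\chi}(X))$ out of the last two summands.

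For the lower bound \eqref{eq:10} I would split $S^{1}$ into the regime $\{|\theta-\widehat{\chi}(X)|\leq\delta\}$ and its complement. In the first regime, orthogonality of $P$ gives
\begin{equation*}
\|\mathsf{V}(\theta,X)\|^{2}=\|X_{\mathbf{I}}\|^{2}+\alpha(X)^{2}(\theta-\widehat{\chi}(X))^{2}+O\bigl(|\theta-\widehat{\chi}(X)|^{3}+\|X_{\mathbf{I}}\|\,(\theta-\widehat{\chi}(X))^{2}\bigr),
\end{equation*}
and shrinking $\delta$ and $\ouv$, with a routine AM--GM step on the cross term, absorbs the remainder into the two leading quadratic terms. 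In the complementary regime, (\ref{eq:A123}.b) and compactness of $S^{1}$ yield a uniform $\|\mathsf{V}(\theta,X)\|\geq c>0$ while the right-hand side of \eqref{eq:10} stays bounded, so the estimate holds after possibly further shrinking $K_{3}$. Estimate \eqref{eq:11} then follows by specializing to $X_{\mathbf{I}}=0$: from \eqref{eq:Vdev2}, $\mathsf{V}(\theta,X)=(\theta-\widehat{\chi}(X))P(X)W_{1}(\theta,X)$, so \eqref{eq:10} forces $\|W_{1}(\theta,X)\|\geq K_{3}\alpha(X)\geq K_{3}\alpha_{0}$ when $\theta\neq\widehat{\chi}(X)$, while $W_{1}(\widehat{\chi}(X),X)=(0,\ldots,0,\alpha(X))^{\!\top}$ handles $\theta=\widehat{\chi}(X)$ directly; one then replaces $K_{3}$ by $K_{3}\alpha_{0}$ (or the common minimum) to obtain a single constant in both inequalities. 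The main obstacle I foresee is the small-$|\theta-\widehat{\chi}|$ regime of \eqref{eq:10}: the cubic and mixed cross terms must be absorbed \emph{uniformly} in $X\in\ouv$, and since $R(\theta,X)$ need not be small compared to $\alpha_{0}$, one must quantitatively shrink $\ouv$ and $\delta$ so that $\alpha_{0}^{2}$ dominates, which is where the explicit constant $K_{3}$ is produced.
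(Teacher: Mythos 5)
Your construction follows essentially the same route as the paper's proof: rotate so that $\frac{\partial\mathsf{V}}{\partial\theta}(\widehat{\chi}(X),X)$ points along the last coordinate axis, use \eqref{eq:53} to see that $P(X)^{-1}\mathsf{V}(\widehat{\chi}(X),X)$ has vanishing last entry, take its first $m-1$ entries as coordinate functions, and obtain $W$ from the second-order Taylor remainder in $\theta$; the two-regime argument for \eqref{eq:10} is also the paper's (which uses the threshold $|\theta-\widehat{\chi}(X)|\le\frac12\alpha_0/\max\|W\|$ and the triangle inequality instead of squaring, but the absorption you describe works equally well). The one step whose justification does not hold up is the claim that, because $\Phi=(\Phi_1,\dots,\Phi_{m-1})$ vanishes exactly on the codimension-$(m-1)$ submanifold $\widetilde{\mathcal{Z}}$, its components have independent differentials at $\bar X$: this implication is false in general (take $\Phi_i=x_i^3$, which vanishes exactly on $\{x_1=\dots=x_{m-1}=0\}$ with $\mathrm{d}\Phi(\bar X)=0$), and nothing in Lemma~\ref{lem-chihat} excludes such degeneracy. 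You must instead invoke the regularity hypothesis (\ref{eq:A123}.a) on $\mathsf{V}$ along its zero set, which is exactly what the paper does at this point: differentiating $X\mapsto P(X)^{-1}\mathsf{V}(\widehat{\chi}(X),X)$ at $\bar X$, the term coming from $\mathrm{d}(P^{-1})$ drops because $\mathsf{V}(\widehat{\chi}(\bar X),\bar X)=0$, and the term $P^{-1}\frac{\partial\mathsf{V}}{\partial\theta}\,\mathrm{d}\widehat{\chi}$ feeds only the last row since $P^{-1}\frac{\partial\mathsf{V}}{\partial\theta}=(0,\dots,0,\alpha)^{\top}$; hence $\mathrm{d}\Phi(\bar X)$ consists of the first $m-1$ rows of $P(\bar X)^{-1}\frac{\partial\mathsf{V}}{\partial X}(\widehat{\chi}(\bar X),\bar X)$ and has rank $m-1$ by (\ref{eq:A123}.a). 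With that correction (and noting that your complementary coordinates $x_m,\dots,x_d$ must be functions defined on a full neighborhood, not only a parametrization of $\widetilde{\mathcal{Z}}$; also, the uniform bound in the regime $|\theta-\widehat{\chi}(X)|\ge\delta$ needs $\ouv$ shrunk to a relatively compact set on which $\widehat{\chi}(X)$ is the only zero), the coordinate step is sound.

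Where you genuinely differ from the paper is \eqref{eq:11}. The paper proves it independently of \eqref{eq:10}: it shows $W_1$ cannot vanish on $S^1\times\{\bar X\}$ (at $\theta=\widehat{\chi}(\bar X)$ because $\alpha>0$, elsewhere because a zero of $W_1$ together with $X_{\mathbf{I}}=0$ would produce a second zero of $\mathsf{V}(\cdot,\bar X)$, contradicting (\ref{eq:A123}.b)), and then gets a uniform lower bound by continuity on a smaller $\ouv$. Your derivation of \eqref{eq:11} directly from \eqref{eq:10} is correct and shorter: when $X_{\mathbf{I}}=0$, \eqref{eq:Vdev2} and $P(X)\in SO(m)$ give $\|\mathsf{V}(\theta,X)\|=|\theta-\widehat{\chi}(X)|\,\|W_1(\theta,X)\|$, so \eqref{eq:10} yields $\|W_1\|\ge K_3\alpha_0$ for $\theta\neq\widehat{\chi}(X)$, while $\|W_1(\widehat{\chi}(X),X)\|=\alpha(X)\ge\alpha_0$; shrinking the constant then gives both inequalities with a single $K_3$, and this even spares a second use of (\ref{eq:A123}.b). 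Apart from the rank issue above, the argument is correct.
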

%------------PROOF
\begin{proof}
The map $X\mapsto\frac{\partial{\mathsf{V}}}{\partial\theta}(\widehat{\chi}(X),X)$ is nonzero for $X=\bar X$, hence it does not vanish on
a sufficiently small neighborhood $\ouv$ of $\bar X$, and one may write
\begin{equation}
\label{eq:54}
  \frac{\partial {\mathsf{V}}}{\partial\theta}(\widehat{\chi}(X),X)\ =\ P(X)\;\left(
    \begin{array}{c}
0_{m-1}\\\alpha(X)
    \end{array}
\right)\ ,\ \ \alpha(X)>\alpha_0>0\ .
\end{equation}
Define $v_1,\ldots,v_m$, smooth maps  $S^1\times\ouv\to\RR$ by 
\begin{equation}
  \label{eq:56}
  \left(
    \begin{array}{c}
v_1(\theta,X)\\\vdots\\v_m(\theta,X)
    \end{array}
\right)
=  P^{-1}(X)\;\mathsf{V}(\theta,X)\,.
\end{equation}
For $i$ between 1 and $m-1$, $\frac{\partial v_i}{\partial\theta}(\widehat{\chi}(X),X)=0$ from (\ref{eq:54}), and 
$v_i(\widehat{\chi}(\bar X),\bar X)=0$ from Lemma~\ref{lem-chihat} and, using(\ref{eq:A123}.a), the rank of the map
$X\mapsto(v_1(\widehat{\chi}(X),X),\ldots,$ $v_{m-1}(\widehat{\chi}(X),X))$ is $m-1$ at $X=\bar X$: on a possibly smaller
neighborhood $\ouv$, there are local coordinates $x_1,\ldots,x_{d}$ such that
$v_i(\theta,X)=x_i+\left(\theta-\widehat{\chi}(X)\right)^2\,W_i(\theta,X)$ for $i\leq m-1$ and for some smooth $W_i$; 
substituting (\ref{eq:54}) and (\ref{eq:56}) in (\ref{eq:53}) implies $v_m(\widehat{\chi}(X),X)=0$, hence
$v_m(\theta,X)=\alpha(X)\,\left(\theta-\widehat{\chi}(X)\right)+W_m(\theta,X) \left(\theta-\widehat{\chi}(X)\right)^2$
for a smooth $W_m$; \eqref{eq:Vdev} is proved.

Possibly restricting $\ouv$ to a subset with compact closure, $\|W(\theta,X)\|$ is bounded on $S^1\times\ouv$;
if $|\theta-\widehat{\chi}(X)|\leq\frac12\alpha_0/\max \|W\|$, then \eqref{eq:10} holds with
$K_3=\frac12$ according to \eqref{eq:Vdev}; on the set where $|\theta-\widehat{\chi}(X)|\geq\frac12\alpha_0/\max
\|W\|$, $\mathsf{V}$ does not vanish and hence 
$(\|X_{\mathbf{I}}\|^2+\alpha(X)^2\left(\theta-\widehat{\chi}(X)\right)^2)^{1/2}/\|\mathsf{V}(\theta,X)\|$
is bounded from below; \eqref{eq:10} is proved, with $K_3$ smaller than this bound and than $\frac12$. 
From \eqref{eq:W1}, $W_1(\widehat{\chi}(\bar X),\bar X)\neq0$ because $\alpha$ does not vanish; from assumption (\ref{eq:A123}.b) and
\eqref{eq:Vdev2} (where $X_{\mathbf{I}}=0$ if $X=\bar X$), $W_1(\theta,\bar X)\neq0$ if
$\theta\neq\widehat{\chi}(\bar X)$, hence $W_1$ does not vanish on $S^1\times\{\bar X\}$; it is therefore bounded from
below on $S^1\times\ouv$ with $\ouv$ a small enough neighborhood of $\bar X$: \eqref{eq:11} holds with $K_3$ smaller
than this bound.
\hfill\end{proof}

\smallskip

{\em Proof of Proposition \ref{prop-C1-LipLog} (Point \ref{point-LipLog})}. \emph{We use $[-\pi,\pi]$ instead of $[0,2\pi]$ as an interval of integration}.
Let $h\in\RR^{\mathsf{d}}$, with $\|h\|=1$. From (\ref{eq:H'}), one has, for some constant $\widetilde{K}$  using bounds on the derivatives of the smooth $\mathsf{V}$,
\begin{align*}  
\left|\mathrm{d}\mathsf{H}(X).h-\mathrm{d}\mathsf{H}(Y).h \right| \leq&
\left|
\frac1{2\pi}\int_{-\pi}^{\pi}\scal{
\frac{\partial\mathsf{V}}{\partial X}(\theta,X).h-\frac{\partial\mathsf{V}}{\partial X}(\theta,Y).h
}
{\frac{\mathsf{V}(\theta,X)}{\|\mathsf{V}(\theta,X)\|}}
\mathrm{d}\theta   \right|
\\
&\ +\left|
\frac1{2\pi}\int_{-\pi}^{\pi}\scal {\frac{\partial\mathsf{V}}{\partial X}(\theta,Y).h} {
\frac{\mathsf{V}(\theta,X)}{\|\mathsf{V}(\theta,X)\|}-\frac{\mathsf{V}(\theta,Y)}{\|\mathsf{V}(\theta,Y)\|}}
\mathrm{d}\theta  \right|
\\
\leq\;
&\widetilde{K}\|X-Y\|+
\frac{\widetilde{K}}{2\pi}
\left\|
\int_{-\pi}^{\pi}\!\frac{\mathsf{V}(\theta,X)}{\|\mathsf{V}(\theta,X)\|}\mathrm{d}\theta
\,-\!
\int_{-\pi}^{\pi}\!\frac{\mathsf{V}(\theta,Y)}{\|\mathsf{V}(\theta,Y)\|}\mathrm{d}\theta
\right\|\,.
\end{align*}
Finally, defining
\begin{equation}
  \label{eq:4}
  \widehat{\mathsf{V}}(\varphi,X)=\mathsf{V}(\widehat{\chi}(X)+\varphi,X)\;,\ \ \ \widehat{W}_1(\varphi,X)=W_1(\widehat{\chi}(X)+\varphi,X)\;,
\end{equation}
and making a different change of variables in the last two integrals, one has
\begin{eqnarray}
  \nonumber
  \|\mathrm{d}\mathsf{H}(X).h-\mathrm{d}\mathsf{H}(Y).h \| &\leq& \widetilde{K}\|X-Y\| 
+
\frac{\widetilde{K}}{2\pi}
\int_{-\pi}^{\pi}
\bigl\|
\frac{\widehat{\mathsf{V}}(\varphi,X)}{\|\widehat{\mathsf{V}}(\varphi,X)\|}
-
\frac{\widehat{\mathsf{V}}(\varphi,Y)}{\|\widehat{\mathsf{V}}(\varphi,Y)\|} \bigr\|
\mathrm{d}\varphi
\\
\label{eq:9}
&\leq&
\widetilde{K}\|X-Y\|  +
\frac{\widetilde{K}}{\pi}
\int_{-\pi}^{\pi}
\frac{\|\widehat{\mathsf{V}}(\varphi,X)-\widehat{\mathsf{V}}(\varphi,Y)\|}
{\|\widehat{\mathsf{V}}(\varphi,X)\|}
\,\mathrm{d}\varphi
\end{eqnarray}
where the last inequality uses the fact 
$\|\frac{u}{\|u\|}-\frac{v}{\|v\|}\|\leq2\min\{\frac{\|u-v\|}{\|u\|},\frac{\|u-v\|}{\|v\|}\}$,
and also holds with $\|\widehat{\mathsf{V}}(\varphi,Y)\|$ instead of
$\|\widehat{\mathsf{V}}(\varphi,X)\|$ in the denominator.
Now let us use Lemma~\ref{lem-coord}, let $X=(x_1,\ldots,x_d)$ and $Y=(y_1,\ldots,y_d)$ in these coordinates; 
from \eqref{eq:Vdev2}, one has, with $\widehat W _1$ defined by \eqref{eq:4},
\vspace{-.8ex}
\begin{gather}
  \label{eq:3}
  \!\!\widehat{\mathsf{V}}(\varphi,X)= P(X)\Bigl[
{\scriptstyle \left(\hspace{-1.4ex}
    \begin{array}{c}
X_{\mathbf{I}}\\0
    \end{array}\hspace{-1.4ex}
\right)}
\!+
\varphi\,\widehat W_1(\varphi,X)
\Bigr]
\!,\;
\widehat{\mathsf{V}}(\varphi,Y)=P(Y)\Bigl[
{\scriptstyle \left(\hspace{-1.4ex}
    \begin{array}{c}
Y_{\mathbf{I}}\\0
    \end{array}\hspace{-1.4ex}
\right)}
\!+
\varphi\,\widehat W_1(\varphi,Y)
\Bigr]\!.
\\\nonumber
\text{Hence}\quad
\widehat{\mathsf{V}}(\varphi,X)-\widehat{\mathsf{V}}(\varphi,Y)=
\bigl(P(X)-P(Y)\bigr)P(X)^{-1} \widehat{\mathsf{V}}(\varphi,X)
\hspace{13.2em}\\[-0.3ex]
\nonumber
\,\hspace{13em}+
P(Y)\Bigl[\,\varphi
\Bigl(
W_1(\varphi,X)-W_1(\varphi,Y)
\Bigr)
+
{\scriptstyle \left(\hspace{-1.4ex}
    \begin{array}{c} X_{\mathbf{I}}-Y_{\mathbf{I}}\\0 \end{array}\hspace{-1.4ex}
\right)}
\,\Bigr]
\end{gather}
% \begin{equation}
%   \label{eq:3}
%   \widehat{\mathsf{V}}(\varphi,X)= P(X)\Bigl[
% {\scriptstyle \left(\hspace{-1.4ex}
%     \begin{array}{c}
% X_{\mathbf{I}}\\0
%     \end{array}\hspace{-1.4ex}
% \right)}
% \!+
% \varphi\,\widehat W_1(\varphi,X)
% \Bigr]
% ,\;
% \widehat{\mathsf{V}}(\varphi,Y)=P(Y)\Bigl[
% {\scriptstyle \left(\hspace{-1.4ex}
%     \begin{array}{c}
% Y_{\mathbf{I}}\\0
%     \end{array}\hspace{-1.4ex}
% \right)}
% \!+
% \varphi\,\widehat W_1(\varphi,Y)
% \Bigr].
% \end{equation}
% Hence
% \begin{eqnarray*}
% \widehat{\mathsf{V}}(\varphi,X)-\widehat{\mathsf{V}}(\varphi,Y)&=&
% \bigl(P(X)-P(Y)\bigr)P(X)^{-1} \widehat{\mathsf{V}}(\varphi,X)
% \\[-0.3ex]&& %\hspace{-2em}
% +
% P(Y)\Bigl[\,\varphi
% \Bigl(
% W_1(\varphi,X)-W_1(\varphi,Y)
% \Bigr)
% +
% {\scriptstyle \left(\hspace{-1.4ex}
%     \begin{array}{c} X_{\mathbf{I}}-Y_{\mathbf{I}}\\0 \end{array}\hspace{-1.4ex}
% \right)}
% \,\Bigr]
% \end{eqnarray*}
$\ $\\[-1.4\baselineskip]and finally
\begin{equation}
\label{eq:aa1} %\nonumber
  \frac{\|\widehat{\mathsf{V}}(\varphi,X)-\widehat{\mathsf{V}}(\varphi,Y)\|}
{\|\widehat{\mathsf{V}}(\varphi,X)\|}
\leq
\|P(X)-P(Y)\|
+
\frac{|\varphi|\left\| W_1(\varphi,X)-W_1(\varphi,Y) \right\|}{\|\widehat{\mathsf{V}}(\varphi,X)\|}
+
\frac{\|X_{\mathbf{I}}-Y_{\mathbf{I}}\|}      
{\|\widehat{\mathsf{V}}(\varphi,X)\|}.
\end{equation}
Two cases are to be distinguished:
\begin{romannum}
  \item If $X_{\mathbf{I}}=Y_{\mathbf{I}}=0$, then $\varphi$ factors out of $\widehat{\mathsf{V}}(\varphi,X)$ and
    $\widehat{\mathsf{V}}(\varphi,Y)$ in \eqref{eq:3} and the last term in \eqref{eq:aa1} is zero: according to \eqref{eq:11},
   the integrand in \eqref{eq:9} is bounded by
$$\|P(X)-P(Y)\|+\frac{\|\widehat W_1(\varphi,X)-\widehat W_1(\varphi,Y)\|}{K_3}\;,$$
and finally $\left|\mathrm{d}\mathsf{H}(X).h-\mathrm{d}\mathsf{H}(Y).h \right|\leq K\,\|X-Y\| $ with a constant $K$ that depends only on $\mathsf{V}$, the open set $\ouv$
and the coordinates.
  \item  If $X_{\mathbf{I}}\neq0$ (or $Y_{\mathbf{I}}\neq0$, interchanging $X$ and
    $Y$), then \eqref{eq:aa1}, using \eqref{eq:10}, implies that the integrand in \eqref{eq:9} is bounded by
    \begin{displaymath}
      \|P(X)-P(Y)\|
+
\frac1{K_3}\;\frac1{\alpha_0}\;
\left\| W_1(\varphi,X)-W_1(\varphi,Y) \right\|
+
\frac1{K_3}\,\sqrt{ 
\frac{\|X_{\mathbf{I}}-Y_{\mathbf{I}}\|^2}{\|X_{\mathbf{I}}\|^2+\alpha(X)\varphi^2}}\ ,
    \end{displaymath}
but the same is also true replacing $\alpha(X)$ with $\alpha(Y)$ and $\|X_\mathbf{I}\|^2$ with
$\|Y_\mathbf{I}\|^2$; hence, since $\|a-b\|^2\leq4\max\{\|a\|^2,\|b\|^2\}$, the last term may be
replaced by
$\frac2{K_3}\,\sqrt{ 
\frac{\|X_{\mathbf{I}}-Y_{\mathbf{I}}\|^2}{\|X_{\mathbf{I}}-Y_{\mathbf{I}}\|^2+4\alpha_0\varphi^2}}
$,
whose integral between $-\pi$ and $\pi$ is equal to 
$$ \frac{\|X_{\mathbf{I}}-Y_{\mathbf{I}}\|}{K_3\,\sqrt{\alpha_0}}\,\ln(1+\frac{4\pi
  \sqrt{\alpha_0}}{\|X_{\mathbf{I}}-Y_{\mathbf{I}}\|}
+\frac{8\pi^2 \alpha_0}{\|X_{\mathbf{I}}-Y_{\mathbf{I}}\|^2}),$$
%\ \ \ \text{with}\ \ \ \beta=\frac{\|X_{\mathbf{I}}-Y_{\mathbf{I}}\|}{2\,\sqrt{\alpha_0}},$$
which is less
than $\|X_{\mathbf{I}}-Y_{\mathbf{I}}\|(k_1+k_2\ln\frac1{\|X_{\mathbf{I}}-Y_{\mathbf{I}}\|})$
for some $k_1,k_2$ when, say, $\frac{\|X_{\mathbf{I}}-Y_{\mathbf{I}}\|}{2\,\sqrt{\alpha_0}}<1$. Finally, 
since $\|X_{\mathbf{I}}-Y_{\mathbf{I}}\|$ is less than $\|X-Y\|$ and $u\mapsto u\ln(1/u)$ is nondecreasing, less than
$\|X-Y\|(k_1+k_2\,\ln\frac1{\|X-Y\|})$.
\end{romannum}

Cases (i) and (ii) do imply \eqref{eq:LipLog}, possibly restricting $\ouv$ so that $\ln\frac1{\|X-Y\|}\geq1$.
\hfill\endproof

% \bibliographystyle{abbr.} %{abbr.} %{plain}
% \bibliography{MyBib}

\end{document}